\documentclass[reqno]{amsart}
\usepackage{amsmath,amsfonts,amssymb,mathrsfs}
\usepackage{hyperref}

\usepackage{mathtools}
\usepackage[inner=1.0in,outer=1.0in,bottom=1.0in, top=1.0in]{geometry}

\usepackage[symbol]{footmisc}

\newtheorem{theorem}{Theorem}[section]

\newtheorem{proposition}[theorem]{Proposition}
\newtheorem{corollary}[theorem]{Corollary}
\newtheorem{lemma}[theorem]{Lemma}
\newtheorem{remark}[theorem]{Remark}

\newcommand{\CC}{\mathbb{C}}

\newcommand{\RR}{\mathbb{R}}

\newcommand{\HH}{\mathbb{H}}

\newif\iffinalrun
\iffinalrun
\else
 \fi

\iffinalrun
  \newcommand{\need}[1]{}
  \newcommand{\mar}[1]{}
\else
  \newcommand{\need}[1]{{\tiny *** #1}}
  \newcommand{\mar}[1]{\marginpar{\raggedright\tiny Fix Me:  #1 }}\fi

\numberwithin{equation}{section}

\begin{document}
\title{Intersecting geodesics on the modular surface}

\author{Junehyuk Jung and Naser Talebizadeh Sardari}

\address{Department of Mathematics, Brown University, Providence, RI 02912 USA}
\email{junehyuk{\textunderscore}jung@brown.edu}
\address{Penn State department of Mathematics, McAllister Building, Pollock Rd, State College, PA 16802 USA}
\email{nzt5208@psu.edu}

\thanks{We thank D. Jakobson, V. Blomer, D. Milicevic, C. Pagano, M. Lee, M. Lipnowski, Y. Kim, and J. Yim for valuable comments. J.J. thanks A. Reid for the discussion that led to this project. J.J. was supported by NSF grant DMS-1900993, and by Sloan Research Fellowship. N.T.S. was supported partially by the National Science Foundation under Grant No. DMS-2015305, and is grateful to Max Planck Institute for Mathematics in Bonn for its hospitality and financial support.}

\begin{abstract}
We introduce the \textit{modular intersection kernel}, and we use it to study how geodesics intersect on the full modular surface $\mathbb{X}=PSL_2\left(\mathbb{Z}\right) \backslash \mathbb{H}$.  Let $C_d$ be the union of closed geodesics with discriminant $d$ and let $\beta\subset \mathbb{X}$ be a compact geodesic segment. As an application of Duke's theorem to the modular intersection kernel, we prove that $ \{\left(p,\theta_p\right)~:~p\in \beta \cap  C_d\}$ becomes equidistributed with respect to $\sin \theta ds d\theta$ on $\beta \times [0,\pi]$ with a power saving rate as $d \to +\infty$. Here $\theta_p$ is the angle of intersection between $\beta$ and  $C_d$  at $p$. This settles the main conjectures introduced by Rickards \cite{rick}.

We prove a similar result for the distribution of angles of intersections between $C_{d_1}$ and $C_{d_2}$ with a power-saving rate in $d_1$ and $d_2$ as $d_1+d_2 \to \infty$. Previous works on the corresponding problem for compact surfaces do not apply to $\mathbb{X}$, because of the singular behavior of the modular intersection kernel near the cusp. We analyze the singular behavior of the modular intersection kernel by approximating it by general (not necessarily spherical) point-pair invariants on $PSL_2\left(\mathbb{Z}\right) \backslash PSL_2\left(\mathbb{R}\right)$ and then by studying their full spectral expansion.

\end{abstract}
\maketitle
\section{Introduction}
Let $Y$ be a negatively curved surface of finite area. The prime geodesic theorem \cite{PGT} states that the number of primitive closed geodesics having length less than $L$, which we denote by $\pi\left(L\right)$, satisfies
\[
\pi\left(L\right)\sim \frac{e^L}{L},
\]
as $L \to \infty$. A natural problem is to understand how primitive closed geodesics of length less than $L$ are positioned or distributed in $Y$ as $L \to \infty$. In particular, one may ask
\begin{enumerate}
\item how the number of transversal intersections $I\left(\alpha_1, \alpha_2\right)$ between two primitive closed geodesics $\alpha_1$ and $\alpha_2$ is distributed, or
\item how the set of angles of intersections between $\alpha_1$ and $\alpha_2$ is distributed,
\end{enumerate}
as one varies $\alpha_1$, or both $\alpha_1$ and $\alpha_2$. Bonahon \cite{MR847953} defined the \textit{intersection form} $i:\mathcal{C} \times \mathcal{C} \to \mathbb{R}^+$ on the space of currents $\mathcal{C}$ such that when $\mu_i$ ($i=1,2$) is the unique invariant measure corresponding to $\alpha_i$, then $i\left(\mu_1,\mu_2\right) = I\left(\alpha_1,\alpha_2\right)$. When $Y$ is compact, Pollicott and Sharp \cite{PS} used an extension of the intersection form to understand the distribution of angles of self-intersections of closed geodesic $\alpha$ having length less than $L$, as $L \to \infty$. When $Y$ is a compact hyperbolic surface, using the intersection form, Herrera \cite{herrera} proved that the distribution of $I\left(\alpha_1, \alpha_2\right)/\left(l\left(\alpha_1\right)l\left(\alpha_2\right)\right)$ for closed geodesics $\alpha_1,\alpha_2$ of length $<L$, is concentrated near $1/\left(2\pi^2 \left(g-1\right)\right) = 2/\left(\pi \mathrm{vol}\left(Y\right)\right)$ with exponentially decaying tail, as $L \to \infty$. Here $l\left(\cdot\right)$ is the length function, and $g$ is the genus of $Y$.

In this article, we study a refined problem:
\begin{itemize}
\item[(3)] how are the locations and angles of intersections between $\alpha_1$ and $\alpha_2$ jointly distributed relative to $\alpha_2$, as one varies $\alpha_1$, or both $\alpha_1$ and $\alpha_2$?
\end{itemize}

To state our main theorem, we let $\mathbb{X}$ be the full modular surface $PSL_2\left(\mathbb{Z}\right)\backslash \mathbb{H}$. On $\mathbb{X}$, primitive oriented closed geodesics are in one-to-one correspondence with conjugacy classes of primitive hyperbolic elements of  $PSL_2\left(\mathbb{Z}\right)$. Moreover there is a bijection between the primitive  hyperbolic conjugacy classes and the $SL_2\left(\mathbb{Z}\right)$ equivalence classes  of primitive integral binary quadratic forms of non-square positive discriminant \cite{LRS,sar82}. So by the discriminant of a primitive closed geodesic, we mean the discriminant of the corresponding binary quadratic form. In particular, if the hyperbolic class $\gamma$ is associated to the binary quadratic form $Q$ then $\gamma^{-1}$ is associated to $-Q$.

Each oriented primitive closed geodesics of discriminant $d$ has a unique lift to a closed geodesic of length $2 \log \varepsilon_d$ in the unit tangent bundle $S\mathbb{X}$. Let $h\left(d\right)$ be the number of inequivalent primitive integral binary quadratic forms of discriminant $d$. We denote the disjoint union of these $h\left(d\right)$ closed geodesics by $\mathscr{C}_d \subset S\mathbb{X},$ which has total length $2h\left(d\right)\log \varepsilon_d$.

 Note that the closed geodesic on $\mathbb{X}$ has length $ \log \varepsilon_d$ or $2 \log \varepsilon_d$  according as $Q$ is or is not equivalent to $-Q$ \cite[p.75]{duke}.
  We now let $C_d$ be the union of primitive (unoriented) closed geodesics of discriminant $d$ on $\mathbb{X}$, and note that $l\left(C_d\right) = h\left(d\right) \log \varepsilon_d $ is the total length of $C_d$.

\begin{theorem}\label{main1}
Fix $T > 100$, and let $\beta$ be a compact oriented geodesic segment of length $<1$  in the region determined by $y<T$ on $\mathbb{X}$. For $0<\theta_1<\theta_2<\pi$, let $I_{\theta_1,\theta_2}\left(\beta,C_d \right)$ be the number of intersections between $\beta$ and $C_d$ with the angle between $\theta_1$ and $\theta_2$. (Here the angle between $\beta$ and $C_d$ at $p \in \beta \cap C_d$ is measured counterclockwise from the tangent to $\beta$ at $p$ to the tangent to $C_d$ at $p$.)

Then we have
\[
\frac{I_{\theta_1,\theta_2}\left(\beta,C_d\right)}{l\left(\beta\right)l\left(C_d\right)} = \frac{3}{\pi^2} \int_{\theta_1}^{\theta_2} \sin \theta d\theta   + O_\epsilon\left(d^{-\frac{25}{3584}+\epsilon}\right),
\]
uniformly in $\beta$, $\theta_1$, and $\theta_2$, under the assumption that
\[
\theta_2-\theta_1 \gg d^{-\frac{25}{7168}},\footnote[2]{Here and elsewhere, $A\ll_\tau B$ means $|A| \leq C(\tau) B$ for some constant $C(\tau)$ that depends only on $\tau$.}
\]
and that
\[
l\left(\beta\right) \gg d^{-\frac{25}{7168}}.
\]
\end{theorem}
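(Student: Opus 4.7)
The plan is to express $I_{\theta_1,\theta_2}(\beta,C_d)$ as the pairing of the arclength measure on the lift $\mathscr{C}_d\subset S\mathbb{X}$ against a suitable test function on $S\mathbb{X}$ built from $\beta$, and then invoke a quantitative form of Duke's equidistribution theorem. Concretely, I would introduce a smooth modular intersection kernel $K^\delta_{\theta_1,\theta_2}$ at scale $\delta>0$, invariant under the diagonal action of $PSL_2(\mathbb{Z})$ on $PSL_2(\mathbb{R})\times PSL_2(\mathbb{R})$, approximating the distribution that detects pairs of unit tangent vectors $(v_1,v_2)$ sharing a base point and meeting at a counterclockwise angle in $[\theta_1,\theta_2]$. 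Integration against the length measure on $\beta$ yields the test function
\[
f^\delta_\beta(v) \;=\; \int_\beta K^\delta_{\theta_1,\theta_2}(\dot\beta(s),v)\,ds,
\]
and its pairing with the arclength measure on $\mathscr{C}_d$ recovers $I_{\theta_1,\theta_2}(\beta,C_d)$ up to a smoothing error governed by $\delta$ together with the count of intersections whose angle lies within $\delta$ of $\theta_1$ or $\theta_2$.

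Next I would extract the main term by replacing the integral over $\mathscr{C}_d$ with the integral against the normalized Liouville measure on $S\mathbb{X}$: unfolding using the $SO(2)$-structure of $S\mathbb{X}$, together with $\mathrm{vol}(\mathbb{X})=\pi/3$, shows that this average, multiplied by $l(\mathscr{C}_d)=2l(C_d)$, equals $\frac{3}{\pi^2}\,l(\beta)\,l(C_d)\int_{\theta_1}^{\theta_2}\sin\theta\,d\theta$ up to $O(\delta)$. The remaining deviation of the $\mathscr{C}_d$-period of $f^\delta_\beta$ from its Liouville average is estimated by writing the full spectral expansion of $f^\delta_\beta$ on $L^2(S\mathbb{X})$ and bounding, Hecke eigenform by Hecke eigenform and Eisenstein series by Eisenstein series, the geodesic period of each spectral component against $\mathscr{C}_d$. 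Duke's subconvex bound provides a power saving in $d$ for each fixed spectral parameter; summing these contributions against the Weyl law, truncated at a height determined by $\delta$, produces a total error polynomial in $\delta^{-1}$ times a negative power of $d$.

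The principal obstacle, flagged in the abstract, is the singular behavior of $K^\delta_{\theta_1,\theta_2}$ near the cusp: because $\mathbb{X}$ is noncompact the $PSL_2(\mathbb{Z})$-periodization picks up substantial contributions from parabolic translates of the diagonal, so the usual machinery for spherical point-pair invariants (which implicitly requires rapid decay at the cusp) does not apply. I would circumvent this by expressing $K^\delta_{\theta_1,\theta_2}$ as a linear combination of general, not necessarily spherical, point-pair invariants on $PSL_2(\mathbb{Z})\backslash PSL_2(\mathbb{R})$, each admitting an explicit spectral expansion against the full spectrum of $PSL_2(\mathbb{Z})\backslash\mathbb{H}$, and then controlling the cuspidal tail via the Fourier expansion of $\mathscr{C}_d$ at the cusp (equivalently, equidistribution of the associated Heegner-type data along closed horocycles). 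Finally, balancing the smoothing error against the spectral error forces $\delta\asymp d^{-25/7168}$, which simultaneously yields the error $d^{-25/3584+\epsilon}$ in the conclusion and explains the hypotheses $\theta_2-\theta_1,\, l(\beta)\gg d^{-25/7168}$, below which the smoothing window would swallow the main term.
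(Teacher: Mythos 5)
Your first two paragraphs and your closing parameter count reproduce the paper's argument almost exactly: the paper defines the modular intersection kernel $K_\delta^{\theta_1,\theta_2}$, shows (Lemmas \ref{lemma1} and \ref{counting}) that $\delta^{-2}\int_{\mathscr{C}_d}\int_{\widetilde\beta}K_\delta^{\theta_1,\theta_2}$ counts intersections up to boundary effects handled by sandwiching $\beta$ between a shortened and a lengthened segment, replaces the sharp kernel by smooth majorants and minorants at scale $\varepsilon=\delta^2$ (Lemma \ref{lem1}, where the $SO(2)$--unfolding and the Jacobian $\frac12|\sin\varphi|\,dt_1\,dt_2\,d\varphi$ produce the $\frac{3}{\pi^2}(\cos\theta_1-\cos\theta_2)$ main term), and feeds the resulting compactly supported smooth test function into the effective equidistribution statement (Theorem \ref{effect}), whose proof is the spectral expansion plus Waldspurger-type period formulas and the Weyl-strength subconvexity bound; the choice $\delta=d^{-25/7168}$, $\varepsilon=\delta^2$ then gives the exponent $-25/3584$ exactly as you balance it. The one place you diverge is your third paragraph: for \emph{this} theorem the cusp is not an obstacle at all, because $\beta$ is compact and confined to $y<T$, so $f_\beta^\delta$ is a genuinely compactly supported smooth function on $S\mathbb{X}$ and the ordinary $L^2$ spectral expansion (Proposition \ref{exp}) applies with no regularization --- the paper says exactly this when introducing \eqref{integral2}. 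The non-spherical point-pair invariants and the pre-trace formula of Section \ref{pretrace} are needed only for Theorem \ref{main}, where both geodesics escape into the cusp; and even there the paper does not argue via Fourier expansion of $\mathscr{C}_d$ at the cusp or horocycle equidistribution, but rather diagonalizes the kernel weight-by-weight against the full (cuspidal plus Eisenstein) spectrum. So that paragraph of your proposal addresses a difficulty that is absent here, and the mechanism you sketch for it is not substantiated; everything you actually need is already contained in your first two paragraphs, provided you add the small bookkeeping the paper supplies --- the invariant-functional reduction $\mu_d(F)=\mu_d(\phi_j^m)\,\eta_j^m(F)$ on each irreducible $U_{\pi_j^m}$ with the uniform bound $\eta_j^m=O(1)$, polynomial control of the period bounds in the spectral parameter (not just in $d$), and the factor $2$ from $\mathscr{C}_d$ being the oriented double cover of $C_d$.
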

\begin{remark}
This statement is false if $C_d$ is replaced by individual geodesics. For instance, the set of intersections between $\beta$ and a closed geodesic $\alpha$ does not necessarily become equidistributed as $l\left(\alpha\right) \to \infty$. To see this, take a finite sheeted covering $S$ of $\mathbb{X}$ whose genus is $\geq 2$. Then according to Rivin's work \cite{riv} there are arbitrarily long simple closed geodesics on $S$. Note that these simple closed geodesics must be contained in a compact part of $S$ \cite{JR}. This implies that there is a compact set $C \subset \mathbb{X}$ which contains arbitrarily long primitive closed geodesics. Take a geodesic segment $\beta$ in $\mathbb{X}-C$. Then there are infinitely many closed geodesics which do not intersect $\beta$.
\end{remark}
\begin{remark}
The exponent $-\frac{25}{3584}$ can be improved slightly by refining our argument, but in order to keep the exposition simple, we do not discuss the optimal rate in the current article.
\end{remark}
As an immediate consequence, we deduce that the intersection points and corresponding angles become equidistributed, resolving the main conjectures introduced by Rickards \cite{rick}. \begin{corollary}
Fix a closed geodesic $\alpha$. Then for any fixed segment $\beta \subset \alpha$, and any fixed $0<\theta_1<\theta_2<\pi$, we have
\[
\lim_{d\to \infty}\frac{I_{\theta_1,\theta_2}\left(\beta,C_d\right)}{I\left(\alpha,C_d\right)} = \frac{l\left(\beta\right)}{l\left(\alpha\right)}\int_{\theta_1}^{\theta_2} \frac{\sin \theta }{2}d\theta.
\]
\end{corollary}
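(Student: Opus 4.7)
The plan is to derive the corollary directly from Theorem \ref{main1} by a partitioning argument, since the corollary concerns fixed $\alpha$, $\beta$, $\theta_1$, $\theta_2$ while Theorem \ref{main1} is uniform in these parameters (within its hypotheses). Since $\alpha$ is a closed geodesic on $\mathbb{X}$, it is compact and therefore avoids a neighborhood of the cusp, so there exists a fixed $T > 100$ with $\alpha \subset \{y < T\}$. I would then choose oriented partitions $\beta = \beta_1 \cup \cdots \cup \beta_N$ and $\alpha = \alpha_1 \cup \cdots \cup \alpha_M$ into geodesic segments of length $<1$, so that Theorem \ref{main1} is applicable to every piece.

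For the fixed $0 < \theta_1 < \theta_2 < \pi$, both $l(\beta_i)$ and $\theta_2 - \theta_1$ are positive constants independent of $d$, so the quantitative side conditions $l(\beta_i) \gg d^{-25/7168}$ and $\theta_2 - \theta_1 \gg d^{-25/7168}$ of Theorem \ref{main1} hold for all sufficiently large $d$. The theorem therefore gives
\[
I_{\theta_1,\theta_2}(\beta_i, C_d) = \frac{3}{\pi^2}\, l(\beta_i)\, l(C_d) \int_{\theta_1}^{\theta_2} \sin\theta \, d\theta + O_\epsilon\!\left( l(\beta_i)\, l(C_d)\, d^{-25/3584+\epsilon}\right).
\]
Summing over $i$ and noting that intersections lying exactly at a shared endpoint of two consecutive $\beta_i$'s contribute $O(N)$ in total (bounded independently of $d$, hence negligible against $l(C_d) \to \infty$) yields
\[
I_{\theta_1,\theta_2}(\beta, C_d) = \frac{3}{\pi^2}\, l(\beta)\, l(C_d) \int_{\theta_1}^{\theta_2} \sin\theta \, d\theta + o\!\left(l(C_d)\right).
\]

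Applying the same procedure to each $\alpha_j$ with $(\theta_1,\theta_2) = (0,\pi)$ and using $\int_0^\pi \sin\theta \, d\theta = 2$ produces
\[
I(\alpha, C_d) = \frac{6}{\pi^2}\, l(\alpha)\, l(C_d) + o\!\left(l(C_d)\right),
\]
and dividing the two asymptotics then gives the claimed limit
\[
\lim_{d \to \infty} \frac{I_{\theta_1,\theta_2}(\beta, C_d)}{I(\alpha, C_d)} = \frac{l(\beta)}{l(\alpha)} \int_{\theta_1}^{\theta_2} \frac{\sin\theta}{2}\, d\theta.
\]
There is essentially no obstacle beyond bookkeeping: Theorem \ref{main1} does all the analytic work, and the only points requiring care are verifying that $\alpha$ sits in some fixed region $\{y<T\}$ (automatic by compactness of closed geodesics on $\mathbb{X}$) and that the quantitative hypotheses of Theorem \ref{main1} become trivial once $\beta$, $\alpha$, $\theta_1$, $\theta_2$ are fixed and $d \to \infty$.
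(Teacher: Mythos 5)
Your overall route is the intended one: the paper offers no separate argument for this corollary (it is stated as an immediate consequence of Theorem \ref{main1}), and your reductions --- compactness of the closed geodesic $\alpha$ giving a fixed $T$ with $\alpha\subset\{y<T\}$, subdivision into oriented segments of length $<1$, and the observation that the quantitative side conditions of Theorem \ref{main1} become vacuous once $\beta$, $\theta_1$, $\theta_2$ are fixed and $d\to\infty$ --- are all correct. The numerator asymptotic $I_{\theta_1,\theta_2}(\beta,C_d)=\tfrac{3}{\pi^2}\,l(\beta)\,l(C_d)(\cos\theta_1-\cos\theta_2)+o\left(l(C_d)\right)$ is fine; I would only suggest taking the subsegments half-open so that no intersection point is counted twice or dropped, since your claim that endpoint coincidences contribute $O(N)$ uniformly in $d$ is not obvious (a priori many arcs of $C_d$ could pass through one fixed point), though this is cosmetic.

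The genuine gap is in the denominator. You invoke Theorem \ref{main1} with $(\theta_1,\theta_2)=(0,\pi)$, but the theorem is stated only for $0<\theta_1<\theta_2<\pi$, and its proof runs through Lemma \ref{lem1}, whose majorant/minorant construction is carried out only for angular windows; the parametrization $\Psi$ degenerates ($\left|\sin\varphi\right|\to 0$) exactly at $\varphi\in\{0,\pi\}$. What the theorem as stated does give is the lower bound $I(\alpha,C_d)\geq I_{\eta,\pi-\eta}(\alpha,C_d)=\tfrac{6}{\pi^2}\cos\eta\; l(\alpha)l(C_d)(1+o(1))$ for each fixed $\eta>0$, and letting $\eta\to 0$ yields $\limsup_{d} I_{\theta_1,\theta_2}(\beta,C_d)/I(\alpha,C_d)\leq \tfrac{l(\beta)}{l(\alpha)}\int_{\theta_1}^{\theta_2}\tfrac{\sin\theta}{2}\,d\theta$. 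The matching inequality requires the sharp upper bound $I(\alpha,C_d)\leq\tfrac{6}{\pi^2}l(\alpha)l(C_d)(1+o(1))$, i.e.\ you must show that intersections with angle in $(0,\eta)\cup(\pi-\eta,\pi)$ contribute at most $o_{\eta\to 0}(1)\cdot l(\alpha)l(C_d)$; since no admissible window $(\theta_1,\theta_2)$ with $\theta_1>0$ covers $(0,\eta)$, this does not follow formally from Theorem \ref{main1}, and an a priori bound of the form $I(\alpha,C_d)\ll l(\alpha)l(C_d)$ with an unspecified constant would not give the correct limit either. The fix is to rerun the proof of Theorem \ref{main1} with a smooth majorant of the full kernel $k_\delta^{0,\pi}\left((i,i),\cdot\right)$, whose support is a bounded open set of volume $2\delta^2$, checking that the degenerate part of its boundary is small enough for the construction of Lemma \ref{lem1} to extend; this is routine but it is the one genuinely non-formal step in the deduction, and your write-up skips it.
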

\begin{remark}
Rickards's work is motivated by the work of Darmon and Vonk~\cite{DV2} on the arithmetic ($p$-arithmetic) intersection between pairs of oriented closed geodesics on the modular surfaces (Shimura curves). The arithmetic intersection between oriented closed geodesics $\alpha_1$ and $\alpha_2$ of discriminants $D_1$ and $D_2$ only depends on $D_1$ and $D_2$ and the angles of intersections between $\alpha_1$ and $\alpha_2$.  Darmon and Vonk   conjectured that \cite[Conjecture 2]{DV2} that the $p$-arithmetic intersection is algebraic and belongs to the composition of the Hilbert class field of real quadratic fields of discriminants $D_1$ and $D_2$.
\end{remark}
To prove our main results, we introduce the \textit{modular intersection kernel}. For $\delta>0$ and $\theta_1, \theta_2 \in \left(0,\pi\right)$, let $k_\delta^{\theta_1,\theta_2}:S\mathbb{H} \times S\mathbb{H} \to \mathbb{R}$ be the integral kernel defined by
\[
k_\delta^{\theta_1,\theta_2}\left(\left(x_1,\xi_1\right), \left(x_2,\xi_2\right)\right) =1,
\]
if the geodesic segments of length $\delta$ from $x_i$ with the initial vector $\xi_i$ intersect at an angle $\in \left(\theta_1,\theta_2\right)$, and $0$ otherwise. Under the identification $S\mathbb{H} \cong PSL_2\left(\mathbb{R}\right)$, for a given discrete subgroup $\Gamma \subset PSL_2\left(\mathbb{R}\right)$, we define the \textit{modular intersection kernel} $K_\delta^{\theta_1,\theta_2}: \Gamma \backslash PSL_2\left(\mathbb{R}\right)\times \Gamma \backslash PSL_2\left(\mathbb{R}\right) \to \mathbb{R}$ by taking the average of $k_\delta^{\theta_1,\theta_2}$ over $\Gamma$:
\[
K_\delta^{\theta_1,\theta_2} \left(g_1,g_2\right) = \sum_{\gamma \in \Gamma}k_\delta^{\theta_1,\theta_2} \left(g_1,\gamma g_2\right).
\]
The basic idea of the proof of Theorem \ref{main1} then is as follows. Heuristically,
\[
I_{\theta_1,\theta_2}\left(\beta,C_d\right)
\]
should be well approximated by
\begin{equation}\label{integral1}
\frac{1}{2\delta^2} \int_{\mathscr{C}_d} \int_{\widetilde{\beta}}  K_\delta^{\theta_1,\theta_2} \left(s_1,s_2\right)ds_1ds_2,
\end{equation}
where  $\widetilde{\beta}\subset S\mathbb{X}$ is a lift of $\beta$ with either of orientations of $\beta$
\[
\widetilde{\beta}\left(t\right) = \left(\beta\left(t\right),\beta'\left(t\right)\right),
\]
under assuming that $\beta\left(t\right)$ is parameterized by the arc length. As noted from \cite{LRS}, Duke's theorem \cite{duke} can be extended to the equidistribution of $\mathscr{C}_d$ in $PSL_2\left(\mathbb{Z}\right) \backslash PSL_2\left(\mathbb{R}\right)$ as $d \to \infty$. Observing that
\begin{equation}\label{integral2}
\frac{1}{2\delta^2} \int_{\widetilde{\beta}}  K_\delta^{\theta_1,\theta_2} \left(s_1,g\right)ds_1
\end{equation}
is a compactly supported function in $g$ for compact $\beta$, \eqref{integral1} is
\[
\sim \frac{l\left(\mathscr{C}_d\right)}{2\delta^2} \int_{g} \int_{\widetilde{\beta}}  K_\delta^{\theta_1,\theta_2} \left(s_1,g\right)ds_1d\mu_g,
\]
which is asymptotically $\frac{3}{\pi^2}l\left(C_d\right)l\left(\beta\right) \int_{\theta_1}^{\theta_2} \sin \alpha d\alpha$ as $\delta \to 0$, by an explicit computation.

Note that \eqref{integral2} is a discontinuous function. Therefore, in order to obtain the rate of convergence, we need a smooth approximation of \eqref{integral2}, and a quantified version of Duke's theorem with explicit dependency on the test functions. To this end, we follow the argument sketched in \cite{LRS} to prove:
\begin{theorem}\label{effect}
Assume that $f \in C_0^\infty \left(PSL_2\left(\mathbb{Z}\right) \backslash PSL_2\left(\mathbb{R}\right)\right)$ has support in the region determined by $y<T$. Then we have
\[
\frac{1}{l\left(\mathscr{C}_d\right)} \int_{\mathscr{C}_d} f\left(s\right) ds = \frac{3}{\pi^2}\int_{PSL_2\left(\mathbb{Z}\right) \backslash PSL_2\left(\mathbb{R}\right)} f\left(g\right) d\mu_g + O_\epsilon\left( \log T d^{-\frac{25}{512}+\epsilon}\|f\|_{W^{6,\infty}}\right).
\]
\end{theorem}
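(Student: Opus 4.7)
I would prove Theorem \ref{effect} following the strategy indicated in \cite{LRS}: spectrally decompose $f$ under the right regular representation of $G = PSL_2(\mathbb{R})$ on $L^2(\Gamma \backslash G)$ with $\Gamma = PSL_2(\mathbb{Z})$, so that $f = c_0 + f_{\mathrm{cusp}} + f_{\mathrm{Eis}}$ with $c_0 = \mathrm{vol}(\Gamma \backslash G)^{-1}\int f \, d\mu_g = \frac{3}{\pi^2}\int f \, d\mu_g$. Since $\int_{\mathscr{C}_d} 1 \, ds = l(\mathscr{C}_d)$, the constant piece yields the main term exactly, so the theorem reduces to bounding, for each non-constant spectral component $\Phi$, the normalized geodesic period
\[
P_\Phi(d) := \frac{1}{l(\mathscr{C}_d)} \int_{\mathscr{C}_d} \Phi(s) \, ds,
\]
and then summing $\langle f, \Phi \rangle P_\Phi(d)$ over a spectral basis.

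For the cuspidal piece I would work in the orthonormal basis of $L^2_{\mathrm{cusp}}(\Gamma\backslash G)$ consisting of weight-$k$ vectors $\Phi_{j,k}$ inside the irreducible subrepresentations generated by Hecke-Maass forms $\phi_j$ of spectral parameter $t_j$ (and the analogous vectors coming from holomorphic cusp forms). By the Waldspurger-Katok-Sarnak-Popa formulas, the squared period $|\sum_Q \int_{\gamma_Q} \Phi_{j,k}\, ds|^2$ factors as an archimedean contribution polynomial in $(t_j,k)$ times the central Rankin-Selberg value $L(\tfrac12, \phi_j \otimes \chi_d)$, where $\chi_d$ is the quadratic character of discriminant $d$. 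The hybrid subconvex bound of Conrey-Iwaniec, together with its spectral-aspect refinements, then yields
\[
P_{\Phi_{j,k}}(d) \ll (1 + |t_j| + |k|)^{A} \, d^{-25/512 + \epsilon}
\]
for some absolute $A$. The Sobolev norm $\|f\|_{W^{6,\infty}}$ forces decay of $\langle f, \Phi_{j,k}\rangle$ in $(t_j,k)$ via repeated integration by parts against the Casimir and the $K$-Laplacian, and six derivatives are enough to render the spectral sum absolutely convergent with the same effective exponent.

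For the Eisenstein piece I would write $f_{\mathrm{Eis}} = \frac{1}{4\pi}\int_{-\infty}^\infty \langle f, E(\cdot, \tfrac12 + it)\rangle \, E(\cdot, \tfrac12 + it)\, dt$ and unfold the geodesic period $\int_{\mathscr{C}_d} E(\cdot, \tfrac12 + it)\, ds$ into a product $L(\tfrac12 + it, \chi_d) L(\tfrac12 - it, \chi_d)$ times explicit gamma factors, to which Burgess subconvexity applies uniformly in $t$. The $\log T$ factor enters through the interaction of $f$ with the non-decaying $y^{1/2}$ behavior of $E$ near the cusp: because $f$ has support extending up to $y = T$, a careful Maass-Selberg truncation of the Eisenstein series is required, and the resulting boundary contribution scales like $\log T$. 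The main obstacle is precisely this Eisenstein part, where one must simultaneously track the truncation producing the $\log T$, apply a hybrid subconvex bound for $L(\tfrac12 + it, \chi_d)$ uniform in $t$ and $d$, and extract enough $t$-decay from the Sobolev norm to render the resulting $t$-integral convergent at the target exponent $25/512$.
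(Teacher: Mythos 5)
Your overall strategy is the one the paper follows: decompose $f$ spectrally into the constant, cuspidal, and Eisenstein pieces, reduce to bounding the normalized periods $\mu_d(\Phi)/l(\mathscr{C}_d)$ of the basis vectors via Waldspurger-type formulas (Shintani/Kohnen--Zagier for holomorphic forms, Katok--Sarnak/Baruch--Mao for Maass forms) together with hybrid subconvexity for $L(\tfrac12,\phi\otimes\chi_D)$, and use the Sobolev norm to make the spectral sum converge. Two points, however, need attention. First, your Eisenstein period formula is wrong: the Hecke--Zagier identity gives
\[
\mu_d\bigl(E_0(\cdot,\tfrac12+it)\bigr)=\frac{\Gamma(\tfrac{s}{2})^2\, d^{s/2}\, L(s,\chi_D)\,(\text{finite Euler-type factor})}{\Gamma(s)\,\zeta(2s)}\Big|_{s=\frac12+it},
\]
i.e.\ a \emph{single} quadratic Dirichlet $L$-value times the archimedean factor $d^{s/2}$ with $|d^{s/2}|=d^{1/4}$, not the product $L(\tfrac12+it,\chi_d)L(\tfrac12-it,\chi_d)$. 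This matters quantitatively: if one squares the $L$-value as you propose, Burgess-type bounds give $d^{1/4}\cdot d^{3/8}=d^{5/8}$, which exceeds the trivial bound $l(\mathscr{C}_d)\asymp d^{1/2+o(1)}$ and yields no power saving; with the correct single-$L$-value formula the hybrid bound $L(\tfrac12+it,\chi_D)\ll_\epsilon((|t|+1)D)^{3/16+\epsilon}$ gives $\mu_d(E_0)\ll d^{1/2-1/16+\epsilon}$, which is more than enough. (The $\log T$ then comes out directly from Cauchy--Schwarz of $f$ against $E_m(\cdot,\tfrac12+it)$ over the region $y<T$, where the constant term contributes $\int^T y\,\frac{dy}{y^2}$; no Maass--Selberg truncation is needed.)

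Second, you state the bounds in the normalized form $P_\Phi(d)\ll(\cdots)d^{-25/512+\epsilon}$, but the Waldspurger-type inputs bound the \emph{unnormalized} periods by $d^{1/2-25/512+\epsilon}$; to divide by $l(\mathscr{C}_d)=2h(d)\log\varepsilon_d$ you must invoke Siegel's theorem to get $l(\mathscr{C}_d)\gg_\epsilon d^{1/2-\epsilon}$, which makes the final error term ineffective --- this step should be made explicit. A further, smaller point: the exponent $\tfrac{25}{512}=\tfrac12\cdot\tfrac18(1-2\theta)$ with $\theta=\tfrac{7}{64}$ is tied to the specific hybrid subconvexity bound $L(\tfrac12,\phi\otimes\chi_D)\ll_\epsilon(\text{spectral data})^{A}D^{\frac12-\frac18(1-2\theta)+\epsilon}$ used in the paper; whatever subconvex input you choose must be uniform (polynomial) in the weight/spectral aspect simultaneously with the $D$-aspect, since the sum over the weight vectors $\phi_{j,l}^m$ runs over all $l$ and all representations, and the invariant-functional ratios $\eta_j^m(\phi_{j,l}^m)$ must be checked to be $O(1)$ uniformly (they are, by the explicit formulas).
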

Here $\|\cdot\|_{W^{k,p}}$ is the Sobolev norm:
\[
\|f\|_{W^{k,p}} = \max_{|\alpha|\leq k}\|\partial_\theta^{\alpha_1} \left(y\partial_x\right)^{\alpha_2} \left(y\partial_y\right)^{\alpha_3} f\|_{L^p}.
\]
\begin{remark}
The proof of Theorem \ref{main1} is based on the equidistribution of the lifts of $C_d$ in the unit tangent bundle. For this reason, one may generalize Theorem \ref{main1} to any surfaces and any sequence of closed geodesics whose lifts become equidistributed on the unit tangent bundle.
\end{remark}

\subsection{Intersecting two closed geodesics}
We now consider the number of intersections between two closed geodesics when both vary.
\begin{theorem}\label{main}
The following estimate holds uniformly in $d_1,d_2>0$, and $0<\theta_1<\theta_2<\pi$ such that $\theta_2-\theta_1 \gg \left(d_1d_2\right)^{-\frac{25}{3072}}$
\[
\frac{I_{\theta_1,\theta_2}\left(C_{d_1},C_{d_2}\right)}{l\left(C_{d_1}\right)l\left(C_{d_2}\right)} = \frac{3}{\pi^2} \int_{\theta_1}^{\theta_2} \sin \theta d\theta  + O_{\epsilon}\left(\left(d_1d_2\right)^{-\frac{25}{6144}+\epsilon}\right).
\]
\end{theorem}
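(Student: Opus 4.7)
The plan is to derive Theorem \ref{main} by applying Theorem \ref{effect} twice — once to $\mathscr{C}_{d_1}$ and once to $\mathscr{C}_{d_2}$ — to the modular intersection kernel $K_\delta^{\theta_1,\theta_2}$, mirroring the heuristic already used for Theorem \ref{main1} but now integrating both variables against closed geodesic cycles. Let $T=(d_1d_2)^{\eta_T}$ for a small exponent $\eta_T>0$ to be optimized, and split
\[
I_{\theta_1,\theta_2}(C_{d_1},C_{d_2})=I^{T}+I^{>T},
\]
where $I^T$ counts intersection points lying in $\{y\le T\}\subset\mathbb{X}$ and $I^{>T}$ those with $y>T$. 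Since a closed geodesic of discriminant $d$ stays below height $O(\sqrt d)$, and inside the cusp its arcs are circular of radius $\gtrsim T$ (hence nearly vertical), one has the direct geometric bound $I^{>T}\ll_\epsilon (d_1d_2)^{1/2+\epsilon}/T$, which is acceptable for $\eta_T$ small.

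For $I^T$ I would approximate it by the double integral
\[
\frac{1}{2\delta^2}\int_{\mathscr{C}_{d_1}}\int_{\mathscr{C}_{d_2}} \chi_T(s_1)\,\chi_T(s_2)\,\widetilde{K}_\delta^{\theta_1,\theta_2}(s_1,s_2)\,ds_1\,ds_2,
\]
where $\chi_T$ is a smooth truncation to $y<T$ and $\widetilde{K}_\delta^{\theta_1,\theta_2}$ is a smooth approximation of $K_\delta^{\theta_1,\theta_2}$ at spatial scale $\delta$ and angular scale $\eta_\theta:=\theta_2-\theta_1$. The defect from this smoothing is itself controlled by applying Theorem \ref{effect} to suitable wider and narrower kernels. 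For fixed $s_1$ with $y(s_1)\le T$, the function $s_2\mapsto \chi_T(s_2)\widetilde{K}_\delta^{\theta_1,\theta_2}(s_1,s_2)$ is compactly supported with $W^{6,\infty}$-norm $\ll (\delta\,\eta_\theta)^{-O(1)}$, so one application of Theorem \ref{effect} replaces the inner integral by $l(\mathscr{C}_{d_2})$ times the integral over $PSL_2(\mathbb{Z})\backslash PSL_2(\mathbb{R})$, up to an error $\log T \cdot d_2^{-25/512+\epsilon}(\delta\eta_\theta)^{-O(1)}$. The resulting function of $s_1$ enjoys similar Sobolev bounds, and a second application of Theorem \ref{effect} to $\mathscr{C}_{d_1}$ produces the main term
\[
l(C_{d_1})\,l(C_{d_2})\cdot \frac{3}{\pi^2}\int_{\theta_1}^{\theta_2}\sin\theta\,d\theta,
\]
with a combined spectral error of order $(\log T)^2(d_1d_2)^{-25/512+\epsilon}(\delta\eta_\theta)^{-O(1)}$.

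Balancing this spectral error against the smoothing defect (roughly $(\delta+\eta_\theta)\cdot l(C_{d_1})l(C_{d_2})$) and against the cusp-excursion bound on $I^{>T}$, and taking $\eta_\theta\sim (d_1d_2)^{-25/3072}$, $\delta$ of comparable size, and $T$ a small positive power of $d_1d_2$, yields the claimed exponent $-25/6144+\epsilon$; the standing hypothesis $\theta_2-\theta_1\gg(d_1d_2)^{-25/3072}$ is precisely what prevents the Sobolev-norm loss $(\delta\eta_\theta)^{-O(1)}$ from eating up the two factors of $d_i^{-25/512}$ coming from \eqref{integral2}-style applications of Theorem \ref{effect}.

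The main obstacle is the cusp. Unlike in Theorem \ref{main1}, where the compactness of $\beta$ makes \eqref{integral2} compactly supported, here the function $g\mapsto \int K_\delta^{\theta_1,\theta_2}(s_1,g)\,ds_1$ is not compactly supported, and the number of $\Gamma$-translates contributing to $K_\delta^{\theta_1,\theta_2}(s_1,s_2)$ grows as either argument moves toward the cusp. Controlling this growth — both to justify the truncation at height $T$ with a power-saving error, and to produce the quantitative $W^{6,\infty}$-bound on the smoothed kernel required for the double use of Theorem \ref{effect} — is exactly what the spectral analysis of general point-pair invariants on $PSL_2(\mathbb{Z})\backslash PSL_2(\mathbb{R})$ developed earlier in the paper is designed to supply.
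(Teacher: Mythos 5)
Your scaffolding (sandwich the sharp kernel between smooth majorants/minorants, reduce to equidistribution, worry about the cusp) matches the paper, but the engine you propose --- two successive applications of Theorem \ref{effect}, one in each variable --- has a genuine gap that prevents it from reaching the stated exponent. When you apply Theorem \ref{effect} to the inner integral $s_2\mapsto \chi_T(s_2)\widetilde K_\delta^{\theta_1,\theta_2}(s_1,s_2)$ for fixed $s_1$, the main term is (up to truncation effects) independent of $s_1$ by unfolding, so the second application acts on a near-constant function and yields no further saving; meanwhile the error of the first application is controlled only in absolute value by $\log T\, d_2^{-25/512+\epsilon}\|f_{s_1}\|_{W^{6,\infty}}$ and is simply integrated over $\mathscr{C}_{d_1}$ with no cancellation. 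The net spectral saving is therefore at best $\max(d_1,d_2)^{-25/512}\le (d_1d_2)^{-25/1024}$, half the exponent of the product saving $(d_1d_2)^{-25/512}$ that the final bound $-25/6144=-\tfrac{1}{12}\cdot\tfrac{25}{512}$ requires after balancing against the Sobolev losses. Recovering the product saving forces you to keep the error as an explicit bilinear sum of period integrals $\mu_{d_1}(\phi)\overline{\mu_{d_2}(\phi)}$ so that \eqref{per:1}--\eqref{per:3} can be applied in both discriminants simultaneously; this is exactly what the paper's pre-trace formula (Theorem \ref{full}) and its consequence Theorem \ref{mainlem} provide. There one expands $K(g_1,g_2)=\sum_\gamma k(g_1^{-1}\gamma g_2)$ spectrally in both variables at once, integrates over $\mathscr{C}_{d_1}\times\mathscr{C}_{d_2}$, and bounds each term by a product of two period bounds times the decay of $h(k,m,n,\cdot)$ from Lemmas \ref{mxc} and \ref{important}; no height truncation is needed because the Eisenstein part of the expansion absorbs the cusp.

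A second, related problem is your claim that $\|f_{s_1}\|_{W^{6,\infty}}\ll(\delta\,\eta_\theta)^{-O(1)}$. This is false for $s_1$ high in the cusp: by Propositions \ref{prop1} and \ref{prop2} the automorphic kernel is of size $\delta\sqrt{y_1y_2}$ near the diagonal, so the sup-norm alone is already $\gg\delta T$ when $y(s_1)\sim T$, and this extra factor of $T$ must be carried through the optimization. You flag the cusp growth in your last paragraph but defer its resolution to ``the spectral analysis of general point-pair invariants''; that analysis is the paper's \emph{replacement} for the iterated-Duke argument, not a patch for it, so the deferral is circular. (Your unproved geometric bound $I^{>T}\ll(d_1d_2)^{1/2+\epsilon}/T$ is plausible --- counting forms $(a,b,c)$ with $|a|\le\sqrt d/(2T)$ controls the arcs reaching height $T$ --- but in the paper's actual proof, via Lemmas \ref{lemma1} and \ref{lem1} followed by Theorem \ref{mainlem} and the choice $\delta^2=\varepsilon=(d_1d_2)^{-25/6144}$, no such truncation ever arises.)
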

Note that if $\Gamma$ is co-compact, then the modular intersection kernel coincides with the \textit{intersection kernel} from \cite{Lalley} when $\theta=\pi$ and $\delta>0$ is sufficiently small. However, when $\Gamma \backslash \mathbb{H}$ is non-compact, then they are never the same; for instance, we have $K_\delta^{\alpha_1,\alpha_2}\left(g,g\right) = \Omega \left(y\right)$ as $y \to \infty$ (Proposition \ref{prop2}). In particular, $K_\delta^{\alpha_1,\alpha_2}$ is not a Hilbert---Schmit kernel, so the usual spectral theory does not apply. This is the main technical difficulty of dealing with the modular intersection kernel for non-compact quotients of $\mathbb{H}$. As it will be shown in the subsequent chapters, when both $\alpha_1$ and $\alpha_2$ are closed geodesics, $I_{0,\theta}\left(\alpha_1,\alpha_2\right)/\left(l\left(\alpha_1\right)l\left(\alpha_2\right)\right)$ is the integral of $\delta^{-2}K_\delta^{\theta_1,\theta_2}/\left(l\left(\alpha_1\right)l\left(\alpha_2\right)\right)$ over $\alpha_1 \times \alpha_2$. When $\alpha_1$ and $\alpha_2$ vary over closed geodesics of length $<L$, as $L \to \infty$, we expect that the integral converges to the integral of $\delta^{-2}K_\delta^{\theta_1,\theta_2}$ over $\Gamma \backslash S\mathbb{H} \times \Gamma \backslash S\mathbb{H}$, since $\alpha_1 \times \alpha_2$ becomes equidistributed in $\Gamma \backslash S\mathbb{H} \times \Gamma \backslash S\mathbb{H}$, as $L \to \infty$. However, unboundedness of the modular intersection kernel $K$ causes issues of interchanging the limit and the integral. In particular, the argument of \cite{PS} using intersection form does not apply in this case. Hence, in order to prove Theorem \ref{main}, we study the full spectral expansion of $K_\delta^{\alpha_1,\alpha_2}\left(g_1,g_2\right)$. This is similar to the existing work on the weight-$m$ Selberg's trace formula \cite{hejhal}, except that we have to deal with all weights simultaneously, and that the modular intersection kernel is not diagonalizable in general. We go over this carefully in Section \ref{pretrace}. Once the spectral expansion is obtained, the integral of $\delta^{-2}K_\delta^{\theta_1,\theta_2}$ over $\alpha_1 \times \alpha_2$ becomes a linear combination of the period integrals of the form
\[
\int_{\alpha_1} \phi_1 ds \times \int_{\alpha_2} \phi_2 ds.
\]
We may now use the same estimates that we use in order to prove the effective Duke's theorem to bound these, which leads to Theorem \ref{main}, generalizing \cite{PS} to a non-compact hyperbolic surface.

\section{The Modular Intersection Kernel}\label{prem}

\subsection{Parametrization}\label{para}
  Recall that $PSL_2\left(\RR\right)$ acts transitively on $\HH$ and on $S\HH$ with the fractional  transformations. For $g\in PSL_2\left(\RR\right),$  $z\in \HH$ and $u\in S\HH$ we write these actions by $gz$ and $gu$. We parameterize  the points of $\HH$ and  $S\HH$  with $x+iy$ and $\left(x+iy,\exp\left(i\theta\right)\right)$. Let
\[
\Pi \left( \left(x+iy,\exp\left(i\theta\right)\right) \right):=x+iy,
\]
be the projection map from $S\HH$ to $\HH$.

Fix $z_0=i$ and $u_0=\left(i,\exp\left(i\frac{\pi}{2}\right)\right)$. Let $g=naR_{\theta}\in PSL_2\left(\RR\right)$ be the Iwasawa decomposition where
\[
n=n\left(x\right)=\begin{pmatrix}
1& x
\\ 0 &1
\end{pmatrix},~
a=a\left(y\right)=\begin{pmatrix}
y^{\frac{1}{2}} &0 \\
0& y^{-\frac{1}{2}}
\end{pmatrix},~\text{ and }~
R_{\theta}=\begin{pmatrix}
\cos \theta &-\sin\theta
\\
\sin\theta &\cos\theta
\end{pmatrix}.
\]
Then we have
\[
gz_0=x+iy
\]
and
\[
gu_0=\left(x+iy,\exp\left(i\left(\frac{\pi}{2}+2\theta\right)\right)\right).
\]
For the rest of the paper, we identify $S\HH$ with $PSL_2\left(\RR\right)$ by sending $g\in PSL_2\left(\RR\right)$ to $gu_0$. We often use the following fact in our computation without mentioning.
\begin{proposition}
The image under $\gamma \in SL_2\left(\mathbb{R}\right)$ of the geodesic segment of length $\delta$ corresponding to $g = \left(x,\xi\right)$ is the geodesic segment of length $\delta$ corresponding to $\gamma g$.
\end{proposition}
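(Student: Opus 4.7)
The plan is to reduce the statement to the trivial observation that left and right multiplication on $PSL_2(\RR)$ commute, after identifying the geodesic flow on $S\HH \cong PSL_2(\RR)$ with right multiplication by a one-parameter subgroup.

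First I would verify the following description of the geodesic flow: under the parametrization of Section \ref{para}, flowing the unit tangent vector $gu_0$ along its geodesic for arclength time $t$ yields $g\, a(e^t)\, u_0$. It suffices to check this at the base frame $u_0 = (i, \exp(i\pi/2))$, because $PSL_2(\RR)$ acts transitively on $S\HH$ by orientation-preserving isometries, and isometries carry geodesics to geodesics while preserving arclength and unit tangents. At $u_0$, the relevant geodesic is the imaginary axis parametrized upward, i.e.\ $t \mapsto (e^t i, \exp(i\pi/2))$, and a direct computation gives $a(e^t) z_0 = e^t i$ and $a(e^t) u_0 = (e^t i, \exp(i\pi/2))$, which is exactly the arclength parametrization.

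With this in hand, the geodesic segment of length $\delta$ corresponding to $g$ is the image in $S\HH$ of the set $\{\, g\, a(e^t)\, u_0 : 0 \le t \le \delta\,\}$. Applying $\gamma \in SL_2(\RR)$, which by construction acts on $S\HH \cong PSL_2(\RR)$ by left multiplication, we obtain $\{\,\gamma g\, a(e^t)\, u_0 : 0 \le t \le \delta\,\}$ by associativity of the group law; this is precisely the geodesic segment of length $\delta$ corresponding to $\gamma g$.

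There is no real obstacle here: the content is simply that the isometric $\Gamma$-action (left multiplication) commutes with the geodesic flow (right multiplication by the diagonal subgroup $A$). The only point requiring genuine care is matching normalizations, namely confirming that with $a(y) = \mathrm{diag}(y^{1/2}, y^{-1/2})$ and the reparametrization $t \mapsto a(e^t)$ the parameter $t$ truly coincides with hyperbolic arclength. This reduces to a one-line computation using $ds^2 = y^{-2}(dx^2 + dy^2)$ along the imaginary axis.
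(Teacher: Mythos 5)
Your proof is correct and is exactly the argument the paper has in mind: the authors state this proposition without proof, but they invoke precisely your key fact --- that the time-$t$ geodesic flow on $S\mathbb{H}\cong PSL_2(\mathbb{R})$ is right multiplication by $a\left(e^t\right)$ --- later in the proof of Lemma \ref{lem1}, and the proposition then reduces, as you say, to associativity (left and right multiplication commute). Your normalization check along the imaginary axis ($\int_1^{e^t} dy/y = t$) is the only computation needed and it is right.
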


We use the volume form given by $dV = \frac{dxdyd\theta}{y^2}$. The volume of $S\mathbb{X}$ is then $\frac{\pi^2}{3}$.

\subsection{Preliminary estimates}
We first recall here the definition of the modular intersection kernel described in the introduction. For $\delta>0$ and $\theta_1,\theta_2 \in \left(0,\pi\right)$, we define the integral kernel
\[
k_\delta^{\theta_1,\theta_2}:S\mathbb{H} \times S\mathbb{H} \to \mathbb{R}
\]
by
\[
k_\delta^{\theta_1,\theta_2}\left(\left(x_1,\xi_1\right), \left(x_2,\xi_2\right)\right) =1,
\]
if the geodesic segment of length $\delta$ on $\mathbb{H}$ from $x_1$ with the initial vector $\xi_1$ and the segment from $x_2$ with the initial vector $\xi_2$ intersect at an angle $\in \left(\theta_1,\theta_2\right)$, and $0$ otherwise. Here the angle of the intersection of geodesic segments $l_1$ and $l_2$ at $p \in l_1 \cap l_2$ is measured counterclockwise from $l_1$ to $l_2$. Under the identification $S\mathbb{H} \cong PSL_2\left(\mathbb{R}\right)$ from \S\ref{para}, we note here that
\[
k_\delta^{\theta_1,\theta_2}\left(gg_1, gg_2\right) = k_\delta^{\theta_1,\theta_2}\left(g_1,g_2\right)
\]
for any $g,g_1,g_2 \in PSL_2\left(\mathbb{R}\right)$.

Now for a given discrete subgroup $\Gamma \subset PSL_2\left(\mathbb{R}\right)$, we define the modular intersection kernel $K_\delta^{\theta_1,\theta_2}: \Gamma \backslash PSL_2\left(\mathbb{R}\right)\times \Gamma \backslash PSL_2\left(\mathbb{R}\right) \to \mathbb{R}$ by taking the average of $k_\delta^{\theta_1,\theta_2}$ over $\Gamma$:
\[
K_\delta^{\theta_1,\theta_2} \left(g_1,g_2\right) = \sum_{\gamma \in \Gamma}k_\delta^{\theta_1,\theta_2} \left(g_1,\gamma g_2\right).
\]
Note that when $\Gamma$ is co-compact, and $\delta>0$ is less than a half of the injectivity radius of $\Gamma \backslash \mathbb{H}$, we have $K_\delta^{\theta_1,\theta_2} \leq 1$. However, when $\Gamma\backslash \mathbb{H}$ is  non-compact, $K_\delta^{\theta_1,\theta_2} \left(g_1,g_2\right)$ becomes arbitrarily large near the diagonal $g_1=g_2$ as $y_1, y_2 \to \infty$. This is illustrated in the following proposition when $\Gamma = PSL_2(\mathbb{Z})$.
\begin{proposition}\label{prop2}
Fix $0<\theta <\pi$. Then for any $1 >\delta >0$, we have
\[
K_\delta^{0,\theta} \left(g,g\right) = \Omega_\theta\left(\delta y\right).
\]
\end{proposition}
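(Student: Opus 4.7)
The plan is to bound $K_\delta^{0,\theta}(g,g)$ from below by retaining only contributions of the unipotent translates $\gamma_n = \bigl(\begin{smallmatrix} 1 & n \\ 0 & 1 \end{smallmatrix}\bigr) \in PSL_2(\mathbb{Z})$ for $n \in \mathbb{Z}_{>0}$, and to exhibit a family of base points $g \in S\mathbb{H}$ of height $y \to \infty$ for which at least $\gg_\theta \delta y$ such translates contribute. Since $\gamma_n g$ is obtained from $g$ by a horizontal shift by $n$ (preserving the initial tangent), the two geodesics are congruent Euclidean semicircles that intersect above the real axis precisely when $n$ is less than twice their common radius.

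The first step is to choose $g$ with base point $iy$ and initial tangent making angle $\phi := \arctan(\sinh(\delta/2))$ with the positive horizontal direction. Then the geodesic through $g$ is the semicircle of radius $r = y/\cos\phi = y\cosh(\delta/2)$ centered at $(y\sinh(\delta/2), 0)$, and $g$ lies at hyperbolic arc length $-\delta/2$ before this semicircle's apex, so that the length-$\delta$ segment from $g$ is symmetric about the apex (and similarly for $\gamma_n g$ about its own apex). Using the arc-length parameterization $z(t) = c + r\tanh t + ir \operatorname{sech} t$, equating imaginary parts at an intersection forces parameters $s$ on the first arc and $-s$ on the second, with $s = \tanh^{-1}\bigl(n\cos\phi/(2y)\bigr)$; both length-$\delta$ segments contain the intersection iff $|s| \leq \delta/2$, which by the choice of $\phi$ simplifies to $|n| \leq 2y\sinh(\delta/2)$.

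A direct computation gives Euclidean unit tangents $(\operatorname{sech}(s), \mp\tanh(s))$ at the intersection on the two arcs, so the counterclockwise angle from the first tangent to the second equals $2\arcsin(\tanh s)$. For $n > 0$ this lies in $(0, \theta)$ iff $n < 2y\sin(\theta/2)\cosh(\delta/2)$. Combining with the segment constraint, the number of admissible positive integers $n$ is at least
\[
\lfloor 2y \min\bigl(\sinh(\delta/2),\, \sin(\theta/2)\cosh(\delta/2)\bigr) \rfloor,
\]
and since $\sinh(\delta/2) \geq \delta/2$ while $\delta \in (0,1)$, this exceeds $c_\theta\,\delta y$ for some positive constant $c_\theta$ depending only on $\theta$, once $y$ is sufficiently large. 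Each such $n$ contributes $k_\delta^{0,\theta}(g,\gamma_n g) = 1$ to the kernel sum, yielding $K_\delta^{0,\theta}(g,g) = \Omega_\theta(\delta y)$.

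The main delicacy will be the precise calibration of $\phi$: a generic initial direction would cause the length-$\delta$ segments to fail to simultaneously reach the ambient intersection of the two full semicircles, so that far fewer (or zero) values of $n$ would contribute. Placing $g$ symmetrically at arc length $-\delta/2$ before the apex is the choice that reconciles the three competing constraints—existence of an ambient intersection, both segments reaching it, and the resulting intersection angle lying in $(0,\theta)$—for a range of $n$ of size $\sim_\theta \delta y$.
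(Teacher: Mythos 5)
Your proof is correct and follows essentially the same route as the paper's: both lower-bound $K_\delta^{0,\theta}(g,g)$ by the contributions of the unipotent translates $\bigl(\begin{smallmatrix}1&n\\0&1\end{smallmatrix}\bigr)$ alone, placing the length-$\delta$ segment symmetrically about the apex of a semicircle of Euclidean radius $\asymp y$ and counting the $\gg_\theta \delta y$ admissible shifts $n$. Your version is somewhat more explicit — the exact intersection parameter, the angle formula $2\arcsin(\tanh s)$ with $\tanh s = n/(2r)$, and the $\min$ that handles every $\delta\in(0,1)$ rather than only sufficiently small $\delta$ — but the underlying idea is identical to the paper's.
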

\begin{proof}
Consider
\[
g= \left(Re^{i\left(\frac{\pi}{2}+\alpha\left(\delta\right)\right)},  e^{i\alpha\left(\delta\right)}\right) \in S \mathbb{H},
\]
where $\alpha\left(\delta\right)$ is chosen such that the geodesic segment
\[
\beta_g:=\{Re^{i\theta} ~:~ |\theta -\frac{\pi}{2}|<\alpha\left(\delta\right)\} \subset \mathbb{H}
\]
has length $\delta$. Note that the length of the segment does not depend on $R$ and that $\alpha\left(\delta\right) \sim \delta$ as $\delta \to 0$. From this, we infer that $\beta_g$ and $\beta_g+n$ with $0<n\ll R\delta$ intersect.

The angle of intersection is explicitly given by $2 \arcsin \frac{n}{R}$. So for all sufficiently small $0<\delta < \theta$, we have
\[
k_\delta^{\theta_1,\theta_2} \left(g, \begin{pmatrix}1 & n \\ 0 & 1\end{pmatrix}g\right) = 1,
\]
for $0 < n \ll R\delta$. This implies that
\[
K_\delta^{\theta_1,\theta_2} \left(g,g\right) \gg \delta R \gg \delta y.\qedhere
\]
\end{proof}

In view of Proposition \ref{prop2}, the following proposition provides a nice upper bound of the modular intersection kernel.

\begin{proposition}\label{prop1}
Let $\Gamma = PSL_2\left(\mathbb{Z}\right)$ and let $1>\delta>0$. Let $h$ be a compactly supported function on $S\mathbb{H}$, where we assume that $h\left(\left(\cdot,\xi\right)\right)$ is supported in $B_\delta\left(i\right)$ for any $\xi\in S^1$. Define $H:\Gamma \backslash S\mathbb{H}\times \Gamma \backslash S\mathbb{H}$ by
\[
H\left(g_1,g_2\right) = \sum_{\gamma \in \Gamma } h\left(g_1^{-1} \gamma g_2\right)
\]
for $g_1, g_2 \in \Gamma \backslash PSL_2\left(\mathbb{R}\right)$. Then for $g_i=\left(z_i,\xi_i\right)$ with $\textrm{dist}_{ \Gamma \backslash \mathbb{H}}\left(z_1,z_2\right) > 2\delta$, we have
\[
H\left(g_1,g_2\right) =0.
\]
When $y_1>0$ and $y_2>0$ are sufficiently large, we have
\[
H \left(g_1,g_2\right) \ll \delta \sqrt{y_1y_2} \|h\|_{L^\infty} .
\]
\end{proposition}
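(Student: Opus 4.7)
The plan is to address the two assertions separately, both flowing from the support condition on $h$. For the vanishing claim, I would first translate the condition $h(g_1^{-1}\gamma g_2)\neq 0$ into a distance condition: by hypothesis, the basepoint of $g_1^{-1}\gamma g_2\cdot u_0$, namely $g_1^{-1}\gamma g_2\cdot i$, must lie in $B_\delta(i)$. Because $g_1$ acts as a hyperbolic isometry, this is equivalent to $\mathrm{dist}_{\mathbb{H}}(z_1,\gamma z_2)<\delta$, and so $\mathrm{dist}_{\Gamma\backslash\mathbb{H}}(z_1,z_2)<\delta$. This contradicts $\mathrm{dist}_{\Gamma\backslash\mathbb{H}}(z_1,z_2)>2\delta$, settling the first part.

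For the quantitative bound, the next step is to show that once $y_1 y_2$ is sufficiently large, only $\gamma\in\Gamma_\infty=\{T^n:n\in\mathbb{Z}\}$ (with $T=\bigl(\begin{smallmatrix}1&1\\ 0&1\end{smallmatrix}\bigr)$) can contribute. For $\gamma=\bigl(\begin{smallmatrix}a&b\\ c&d\end{smallmatrix}\bigr)\in\Gamma$ with $c\neq 0$, integrality forces $|c|\geq 1$, hence $\mathrm{Im}(\gamma z_2)=y_2/|cz_2+d|^2\leq 1/y_2$. On the other hand, $\mathrm{dist}_\mathbb{H}(z_1,w)<\delta$ forces $\mathrm{Im}(w)\geq y_1 e^{-\delta}$, via the elementary bound $\cosh\mathrm{dist}(z,w)\geq \tfrac12\bigl(\mathrm{Im}(z)/\mathrm{Im}(w)+\mathrm{Im}(w)/\mathrm{Im}(z)\bigr)$. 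So once $y_1y_2>e^{\delta}$ (in particular for $y_1, y_2$ as in the hypothesis), non-parabolic $\gamma$'s contribute nothing.

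It then remains to count $n\in\mathbb{Z}$ with $\mathrm{dist}_\mathbb{H}(z_1,z_2+n)<\delta$. Unravelling this via $\cosh\mathrm{dist}(z,w)=1+|z-w|^2/(2\,\mathrm{Im}(z)\mathrm{Im}(w))$ yields
\[
(x_1-x_2-n)^2+(y_1-y_2)^2<2y_1y_2(\cosh\delta-1)\ll \delta^2 y_1 y_2
\]
for $0<\delta<1$, so admissible $n$ lie in an interval of length $\ll \delta\sqrt{y_1y_2}$. The number of such integers is $\ll 1+\delta\sqrt{y_1y_2}$, which is $\ll \delta\sqrt{y_1y_2}$ once $y_1y_2$ is large enough that $\delta\sqrt{y_1y_2}\geq 1$. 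Bounding each surviving term by $\|h\|_{L^\infty}$ produces the claimed estimate $H(g_1,g_2)\ll \delta\sqrt{y_1y_2}\|h\|_{L^\infty}$.

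Conceptually the proof is clean; the main technical input is the reduction-theoretic observation that non-parabolic elements of $PSL_2(\mathbb{Z})$ cannot send a cuspidal point to another cuspidal point, which isolates the parabolic subgroup. What follows is a one-dimensional lattice-point count rather than a two-dimensional one, and this is precisely what produces the factor $\delta\sqrt{y_1y_2}$ instead of the naive $\delta^2 y_1y_2$ — a sharpness feature that is essential for analyzing the singular behavior of $K_\delta^{\theta_1,\theta_2}$ near the cusp in the subsequent spectral arguments.
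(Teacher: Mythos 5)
Your proposal is correct and follows essentially the same route as the paper: the support condition forces $\mathrm{dist}_{\mathbb{H}}(z_1,\gamma z_2)<\delta$ (giving the vanishing), and for large $y_1,y_2$ only the parabolic elements $\left(\begin{smallmatrix}1&n\\0&1\end{smallmatrix}\right)$ can contribute, reducing the sum to a one-dimensional count of integers $n$ in an interval of length $\ll\delta\sqrt{y_1y_2}$. You supply slightly more detail than the paper on why $c\neq 0$ is excluded and on the $1+\delta\sqrt{y_1y_2}$ versus $\delta\sqrt{y_1y_2}$ point, but the argument is the same.
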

\begin{proof}
If $H >0$, then there exists $\gamma \in \Gamma $ such that
\[
h \left(g_1^{-1}\gamma g_2\right)>0.
\]
This implies that the balls of radius $\delta$ centered at $z_1$ and  $\gamma z_2$ intersect, hence $\mathrm{dist}_\mathbb{H}\left(z_1,\gamma z_2\right) < 2\delta$, which contradicts the assumption.

Now to prove the second estimate, we first note that when $y_2$ is sufficiently large, we have $y\left(\gamma g_2\right) <1$ unless $\gamma = \begin{pmatrix}1 &n\\ 0 & 1 \end{pmatrix}$. Therefore $h \left(g_1^{-1}\gamma g_2\right)>0$ only if $\gamma = \begin{pmatrix}1 &n\\ 0 & 1 \end{pmatrix}$. Note that $h \left(g_1^{-1}\gamma g_2\right)=1$ holds only if $\textrm{dist}_\mathbb{H} \left(z_1,n+z_2\right) < 2\delta$. This is equivalent to
\[
\mathrm{arccosh}\left(1+ \frac{\left(n+x_2-x_1\right)^2 + \left(y_1-y_2\right)^2}{y_1y_2}\right) < 2\delta,
\]
and so
\[
\left(n+x_2-x_1\right)^2 < y_1y_2\left(\cosh\left(2\delta\right)-1\right)-\left(y_1-y_2\right)^2 \leq y_1y_2\left(\cosh\left(2\delta\right)-1\right),
\]
from which we infer that there are at most $ \ll \delta \sqrt{y_1y_2}$ choices of $\gamma$ which makes $h \left(g_1,\gamma g_2\right)>0$.
\end{proof}

Now we analyze the modular intersection kernel when one variable is assumed to be contained in a compact set. We first note that if $\delta$ is less than half of the injectivity radius of $g_0$ in $\Gamma \backslash S\mathbb{H}$, then for each $g \in S\mathbb{H}$, there is at most one $\gamma \in \Gamma $ such that
\[
k_\delta^{\theta_1,\theta_2} \left(g_0,\gamma g\right) \neq 0.
\]
Therefore $K_\delta^{\theta_1,\theta_2} \left(g_0, \cdot \right)$ coincides with $k_\delta^{\theta_1,\theta_2} \left(g_0, \cdot\right)$ in the $2\delta$-neighborhood of $g_0$, which is a translation of $k_\delta^{\theta_1,\theta_2} \left(\left(i,i\right),\cdot\right)$ around $\left(i,i\right)$.
\begin{lemma}\label{lem1}
For $0< \theta_1< \theta_2 <\pi$, we have
\[
\int_\mathbb{H} k_\delta^{\theta_1,\theta_2} \left(\left(i,i\right),g\right)dV = \left(\cos \theta_1 - \cos \theta_2\right) \delta^2.
\]
Assume that $0<\delta <1$. Then for any $\varepsilon = o\left(\delta\right)$ and $\varepsilon = o\left(\theta_2-\theta_1\right)$ there exist a smooth majorant $M_\delta^{\theta_1,\theta_2}$ and a smooth minorant $m_\delta^{\theta_1,\theta_2}$, i.e.,
\[
0\leq m_\delta^{\theta_1,\theta_2} \leq k_\delta^{\theta_1,\theta_2}\left(\left(i,i\right),\cdot\right) \leq M_\delta^{\theta_1,\theta_2},
\]
such that
\[
\int m_\delta^{\theta_1,\theta_2} dV ~\text{ and }~\int M_\delta^{\theta_1,\theta_2} dV
\]
are both
\[
\left(\cos \theta_1 -\cos \theta_2\right) \delta^2 \left(1+O\left(\varepsilon\right)\right),
\]
and that
\[
\|m_\delta^{\theta_1,\theta_2}\|_{W^{k,\infty }}+\|M_\delta^{\theta_1,\theta_2}\|_{W^{k,\infty }} = O_k\left(\varepsilon^{-k}\right).
\]
\end{lemma}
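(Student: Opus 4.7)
The idea is to parametrize the support of $k_\delta^{\theta_1,\theta_2}((i,i),\cdot) \subset S\mathbb{H}$ by three intrinsic quantities at the intersection point: the arclength $s \in [0,\delta]$ along the first (upward vertical) geodesic, the counterclockwise intersection angle $\alpha \in (\theta_1,\theta_2)$, and the arclength $t \in [0,\delta]$ along the second geodesic from its start. Under $S\mathbb{H} \cong PSL_2(\mathbb{R})$, this parametrization is
\[
\Psi(s,\alpha,t) \;=\; a(e^s)\, R_{\alpha/2}\, a(e^{-t}),
\]
namely, start at $(ie^s, e^{i(\pi/2+\alpha)})$ and flow backward for time $t$. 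Since $0 < \theta_1 < \theta_2 < \pi$, we are away from the singular locus where the two geodesic segments are tangent, so $\Psi$ is a smooth diffeomorphism onto $\{k_\delta^{\theta_1,\theta_2}((i,i),\cdot)=1\}$.

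The first identity then reduces to computing $\Psi^* dV$. I would do this via the right-invariant Maurer--Cartan form $dg\cdot g^{-1}$. Writing $g=g_1g_2g_3$ and using $\mathrm{Ad}(a(e^s))K=\cosh(s)K-\sinh(s)N$ together with $\mathrm{Ad}(R_{\alpha/2})H=\cos\alpha\,H+\tfrac12\sin\alpha\,N$, the three components of $dg\cdot g^{-1}$ in an $\mathfrak{sl}_2$ basis can be read off, and their wedge collapses (via $\cosh^2-\sinh^2=1$) to a pullback of the form $c\sin\alpha\,ds\,d\alpha\,dt$ with the constant $c$ fixed by the normalization $\mathrm{vol}(S\mathbb{X})=\pi^2/3$. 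Integrating over the parameter cube yields the identity $\int k\,dV=(\cos\theta_1-\cos\theta_2)\delta^2$.

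For the smooth majorant and minorant, I would mollify in $(s,\alpha,t)$-coordinates. Fix a standard $\varepsilon$-family of one-dimensional cutoffs $\chi_\varepsilon^{\pm,[a,b]}$ satisfying $\chi_\varepsilon^-\le \mathbf{1}_{[a,b]}\le \chi_\varepsilon^+$, with transitions supported in $O(\varepsilon)$-neighborhoods of the endpoints and $\|\chi_\varepsilon^\pm\|_{C^k}\ll_k \varepsilon^{-k}$. Define $m_\delta^{\theta_1,\theta_2}$ and $M_\delta^{\theta_1,\theta_2}$ on $S\mathbb{H}$ by $g\mapsto \chi_\varepsilon^-(s)\chi_\varepsilon^-(\alpha)\chi_\varepsilon^-(t)$ and $g\mapsto \chi_\varepsilon^+(s)\chi_\varepsilon^+(\alpha)\chi_\varepsilon^+(t)$ respectively (where $\Psi(s,\alpha,t)=g$), extended by zero outside $\mathrm{image}(\Psi)$. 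The Jacobian already computed gives $\int(M-m)\,dV=O(\varepsilon\delta(\cos\theta_1-\cos\theta_2))+O(\delta^2\varepsilon)$, i.e.\ relative error $O(\varepsilon/\delta)+O(\varepsilon/(\theta_2-\theta_1))=o(1)$ under the hypotheses $\varepsilon=o(\delta)$ and $\varepsilon=o(\theta_2-\theta_1)$. For the Sobolev bound, since $\Psi$ is a diffeomorphism with bounded derivatives on the compact $O(\delta)$-neighborhood of $(i,i)$ (where $y\asymp 1$), the left-invariant operators $\partial_\theta$, $y\partial_x$, $y\partial_y$ appearing in the paper's $W^{k,\infty}$-norm are bounded-coefficient combinations of $\partial_s,\partial_\alpha,\partial_t$; each derivative of a mollifier contributes at most one factor of $\varepsilon^{-1}$, yielding $\|m\|_{W^{k,\infty}}+\|M\|_{W^{k,\infty}}\ll_k \varepsilon^{-k}$. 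The main obstacle I anticipate is the Jacobian computation --- specifically the bookkeeping between the Iwasawa angle $\theta$ and the geometric tangent angle $\pi/2+2\theta$ --- with everything downstream being routine mollification.
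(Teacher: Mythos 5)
Your parametrization $\Psi(s,\alpha,t)=a(e^{s})R_{\alpha/2}a(e^{-t})$ and the resulting density $\tfrac12|\sin\alpha|\,ds\,d\alpha\,dt$ are exactly the paper's (the Jacobian is worked out in Appendix \ref{app2}), and the mollification step is in the same spirit as the paper's. However, there is a genuine gap in the first identity: $\Psi$ restricted to $\alpha\in(\theta_1,\theta_2)$ is \emph{not} onto the support of $k_\delta^{\theta_1,\theta_2}\left(\left(i,i\right),\cdot\right)$. The intersection angle is measured between the two segments as unoriented lines, so it only sees the tangent direction of the second segment modulo $\pi$: a crossing of the vertical segment at angle $\varphi$ occurs both when $ga(e^{t})=a(e^{s})R_{\varphi/2}$ and when $ga(e^{t})=a(e^{s})R_{(\varphi+\pi)/2}$, and $R_{\varphi/2}\neq R_{(\varphi+\pi)/2}$ in $PSL_2\left(\mathbb{R}\right)$ (they give opposite tangent vectors $e^{i(\pi/2+\varphi)}$ and $e^{i(\pi/2+\varphi+\pi)}$ at the crossing point). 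The support therefore consists of two disjoint diffeomorphic copies of your parameter box --- the paper takes $\varphi\in(\theta_1,\theta_2)\cup(\theta_1+\pi,\theta_2+\pi)$ --- each contributing $\tfrac12\delta^2\int_{\theta_1}^{\theta_2}\sin\varphi\,d\varphi$. With only one branch your computation yields $\tfrac12\left(\cos\theta_1-\cos\theta_2\right)\delta^2$, half the claimed value. Relatedly, ``fixing $c$ by the normalization $\mathrm{vol}(S\mathbb{X})=\pi^2/3$'' is not a legitimate way to determine the Jacobian constant: that normalization is already encoded in $dV=y^{-2}\,dx\,dy\,d\theta$, and the constant must be obtained by evaluating $\Psi^*dV$ at a point. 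Doing so honestly gives $c=\tfrac12$, which is precisely what exposes the missing component.

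Two smaller remarks. Your boundary-layer accounting gives relative error $O\left(\varepsilon/\delta\right)+O\left(\varepsilon/(\theta_2-\theta_1)\right)$ rather than the $1+O\left(\varepsilon\right)$ stated in the lemma; this is still $o(1)$ under the hypotheses, but it is a weaker conclusion than the one you are asked to prove, and if used downstream it would alter the final exponents, so you should either sharpen the mollification or flag the discrepancy explicitly. Finally, once the second component is restored, you must mollify both copies of the box (they are disjoint open sets, so extending each mollified piece by zero and summing is fine, and your argument that $\partial_\theta$, $y\partial_x$, $y\partial_y$ are bounded-coefficient combinations of $\partial_s,\partial_\alpha,\partial_t$ on the compact region $y\asymp1$ then gives the $W^{k,\infty}$ bound as claimed).
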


\begin{proof}
Note that the action of the geodesic flow of time $t$  on $S\HH =PSL_2\left(\mathbb{R}\right)$ is the multiplication from the right by $a\left(e^t\right)$.
For given $\varphi \in \left(\theta_1,\theta_2\right)$, we describe the collection of $g \in PSL_2\left(\mathbb{R}\right)$ for which the corresponding geodesic segment of length $\delta$ intersects $\{iy~:~e^{\delta}>y>1\}$ transversally at angle $\varphi$. Note that this happens only when
\[
g a\left(e^{\frac{t_2}{2}}\right)=\begin{cases}a\left(e^{\frac{t_1}{2}}\right)R_{\frac{\varphi}{2}},
\\
a\left(e^{\frac{t_1}{2}}\right)R_{\frac{\varphi+\pi}{2}}.
\end{cases}
\]
for some $0< t_1, t_2 <\delta$. Hence
\[
g= \begin{cases}a\left(e^{\frac{t_1}{2}}\right)R_{\frac{\varphi}{2}}a\left(e^{-\frac{t_2}{2}}\right), \\
 a\left(e^{\frac{t_1}{2}}\right)R_{\frac{\varphi+\pi}{2}}a\left(e^{-\frac{t_2}{2}}\right).
 \end{cases}
\]

Consider $\Psi:AKA \to  PSL_2\left(\mathbb{R}\right)$ given by
\[
\left(t_1,\varphi,t_2\right) \mapsto a\left(e^{\frac{t_1}{2}}\right)R_{\frac{\varphi}{2}}a\left(e^{-\frac{t_2}{2}}\right)
\]
The determinant of the Jacobian of $\Psi$ is a nonzero multiple of $|\sin \varphi |$ (we refer the readers to Appendix \ref{app2} for the computation), and so this defines a local diffeomorphism away from $\varphi =0$ and $\pi$. Observe that $\Psi$ is injective away from $\varphi = 0$ and $\pi$. From this we infer that the support of $k_\delta^{\theta_1,\theta_2} \left(\left(i,i\right),g\right)$ is the image of the open box
\[
\{\left(t_1,\varphi,t_2\right)~:~ 0<t_1,t_2 <\delta,~ \theta_1 <\varphi <\theta_2~\text{ or }~\theta_1+\pi <\varphi <\theta_2+\pi\}
\]
under $\Psi$, and
\begin{multline*}
\int_\mathbb{H} k_\delta^{\theta_1,\theta_2} \left(\left(i,i\right),g\right)dV 
= \frac{1}{2} \int_{0}^{\delta}\int_{0}^\delta\int_{\theta_1}^{\theta_2}  |\sin\left(\varphi\right)| d\varphi dt_1 dt_2 +  \frac{1}{2} \int_{0}^{\delta}\int_{0}^\delta\int_{\theta_1+\pi}^{\theta_2+\pi}  |\sin\left(\varphi\right)| d\varphi dt_1 dt_2\\
=\left(\cos \theta_1 - \cos \theta_2\right) \delta^2,
\end{multline*}
where we used $dV = \frac{1}{2} |\sin \varphi| d\varphi dt_1dt_2$ (\eqref{app:1}).

Note that the support of $k_\delta^{\theta_1,\theta_2} \left(\left(i,i\right),\cdot\right)$ is an open set which has a piecewise smooth boundary. Therefore, under the assumption that $\varepsilon = o\left(\delta\right)$ and $\varepsilon = o\left(\theta_2-\theta_1\right)$, there exist smooth majorant and minorant whose $L^1$ norms are $ \left(\cos \theta_1 -\cos \theta_2\right) \delta^2 \left(1+O\left(\varepsilon\right)\right)$, and whose $k$-th derivatives are $O_k\left(\varepsilon^{-k}\right)$.
\end{proof}
As an immediate application, we have the following corollary.
\begin{corollary}
Fix a compact subset $C \subset \Gamma \backslash S\mathbb{H}$, and assume that $\delta$ is less than the half of the infimum of injectivity radius of $g \in C$ in $\Gamma \backslash S\mathbb{H}$. Then for any given compact geodesic segment $\beta \subset C$, and for any given $\varepsilon>0$ which is $o\left(\delta\right)$ and $o\left(\theta_2-\theta_1\right)$,
\[
\int_\beta K_\delta^{\theta_1,\theta_2}\left(s, \cdot\right) ds
\]
admits a smooth majorant $M_{\beta,\delta}^{\theta_1,\theta_2}$ and a smooth minorant $m_{\beta,\delta}^{\theta_1,\theta_2}$ such that
\[
\|m_{\beta,\delta}^{\theta_1,\theta_2}\|_{L^1}, \|M_{\beta,\delta}^{\theta_1,\theta_2}\|_{L^1} =  l\left(\beta\right)\left(\cos\theta_1-\cos \theta_2\right) \delta^2\left(1+O\left(\varepsilon\right)\right),
\]
and that
\[
\|m_{\beta,\delta}^{\theta_1,\theta_2}\|_{W^{k,\infty }}+\|M_{\beta,\delta}^{\theta_1,\theta_2i}\|_{W^{k,\infty }} = O_k\left(l\left(\beta\right)\varepsilon^{-k}\right).
\]
\end{corollary}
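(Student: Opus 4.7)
The plan is to descend the smooth bounds of Lemma~\ref{lem1} to the quotient by translating along $\beta$ and folding over $\Gamma$. First, for each $s \in \beta$ I would fix a coset representative $g_s \in PSL_2(\RR)$ with $g_s u_0 = s$. By the left $PSL_2(\RR)$-invariance of $k_\delta^{\theta_1,\theta_2}$ recorded in \S\ref{prem},
\[
k_\delta^{\theta_1,\theta_2}(s, g') = F(g_s^{-1} g'), \qquad F(\cdot) := k_\delta^{\theta_1,\theta_2}(u_0, \cdot).
\]
I would then apply Lemma~\ref{lem1} to obtain smooth $0 \le \tilde m_\delta^{\theta_1,\theta_2} \le F \le \tilde M_\delta^{\theta_1,\theta_2}$ on $S\HH$ with $L^1$ norms equal to $(\cos\theta_1 - \cos\theta_2)\delta^2(1+O(\varepsilon))$ and with $W^{k,\infty}$ norms $O_k(\varepsilon^{-k})$. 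For $\varepsilon$ sufficiently small their supports sit inside a ball of radius $(2+o(1))\delta$ about $u_0$, which by hypothesis is shorter than the injectivity radius at every $s \in \beta \subset C$.

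The candidate bounds are then
\[
M_{\beta,\delta}^{\theta_1,\theta_2}(g) := \int_\beta \sum_{\gamma \in \Gamma} \tilde M_\delta^{\theta_1,\theta_2}(g_s^{-1} \gamma g) \, ds,
\]
with $m_{\beta,\delta}^{\theta_1,\theta_2}$ defined analogously from $\tilde m_\delta^{\theta_1,\theta_2}$. The pointwise sandwich $m_{\beta,\delta}^{\theta_1,\theta_2} \le \int_\beta K_\delta^{\theta_1,\theta_2}(s, \cdot) ds \le M_{\beta,\delta}^{\theta_1,\theta_2}$ is immediate termwise, and the injectivity radius hypothesis ensures that at each fixed $(s,g)$ at most one $\gamma \in \Gamma$ contributes to the inner sum, so the $\Gamma$-sum is locally finite and $M, m$ are smooth. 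For the $L^1$ norms, I would unfold the $\Gamma$-sum and use left-invariance of Haar measure:
\[
\int_{\Gamma \backslash S\HH} M_{\beta,\delta}^{\theta_1,\theta_2} \, dV = \int_\beta \int_{S\HH} \tilde M_\delta^{\theta_1,\theta_2}(g') \, dV(g') \, ds = l(\beta)(\cos\theta_1 - \cos\theta_2)\delta^2(1+O(\varepsilon)),
\]
by Lemma~\ref{lem1}, and identically for $m$.

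For the Sobolev bound I would differentiate under the integral sign. Since only $O(1)$ summands contribute at any $(s,g)$, it suffices to estimate $\|\tilde M_\delta^{\theta_1,\theta_2}(g_s^{-1} \cdot)\|_{W^{k,\infty}}$ uniformly in $s \in \beta$. Expressing the operators $\partial_\theta$, $y\partial_x$, $y\partial_y$ after left translation by $g_s^{-1}$ introduces Jacobian factors that are smoothly bounded in terms of $C$ alone, so this norm is comparable to $\|\tilde M_\delta^{\theta_1,\theta_2}\|_{W^{k,\infty}} = O_k(\varepsilon^{-k})$; integrating over $\beta$ yields the claimed $O_k(l(\beta)\varepsilon^{-k})$. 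The only real subtlety is controlling this left-translation on the Sobolev norm, and it is harmless precisely because $g_s$ ranges over a compact set---without that compactness the estimate would deteriorate with the height of $s$.
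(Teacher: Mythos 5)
Your proposal is correct and matches the paper's intended argument: the paper states this corollary as an ``immediate application'' of Lemma \ref{lem1} without written proof, and the construction it has in mind is exactly yours --- left-translate the majorant/minorant of $k_\delta^{\theta_1,\theta_2}\left(\left(i,i\right),\cdot\right)$ along $\widetilde{\beta}$, periodize over $\Gamma$, and integrate, as is made explicit by the functions $f_1,f_2$ in the proof of Theorem \ref{main1}. Your handling of the three claims (termwise sandwich, unfolding for the $L^1$ norm, injectivity radius plus uniform comparability of the frame $\partial_\theta, y\partial_x, y\partial_y$ with a left-invariant frame for the $W^{k,\infty}$ bound) is exactly what is needed.
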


\subsection{Intersection numbers}
In this section, we prove formulas relating the number of intersections between two geodesics to the integral of the modular intersection kernel over the two geodesics.
\begin{lemma}\label{lemma1}
Let $\alpha_i=\{\alpha_i\left(t\right)~:~ t \in [0,l\left(\alpha_i\right))\}$ be closed geodesics in $\Gamma\backslash \mathbb{H}$ parameterized by the arc length, and let $\widetilde{\alpha}_i = \{\left(\alpha_i\left(t\right), \alpha_i'\left(t\right)\right)~:~ t \in [0,l\left(\alpha_i\right))\} \subset S\mathbb{H}$ be the lifts of $\alpha_i$ for $i=1,2$. Then for any $\delta>0$,
\[
I_{\theta_1,\theta_2}\left(\alpha_1,\alpha_2\right) = \frac{1}{\delta^2}\int_{\widetilde{\alpha}_2}\int_{\widetilde{\alpha}_1} K_{\delta}^{\theta_1,\theta_2} \left(s_1,s_2\right) ds_1 ds_2.
\]
\end{lemma}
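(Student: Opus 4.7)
The plan is to expand the $\Gamma$-sum defining $K_\delta^{\theta_1,\theta_2}$ and match each $\gamma \in \Gamma$ contributing non-trivially to the double integral with a unique transversal intersection of $\alpha_1$ with $\alpha_2$ in $\Gamma\backslash\mathbb{H}$, each carrying a $\delta^2$ ``witness box'' of positivity.

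Fix once and for all lifts $\widetilde{\alpha}_i^\uparrow : \mathbb{R} \to S\mathbb{H}$ of $\widetilde{\alpha}_i$, with primitive hyperbolic stabilizers $\phi_i \in \Gamma$ satisfying $\widetilde{\alpha}_i^\uparrow\left(t+l\left(\alpha_i\right)\right) = \phi_i \widetilde{\alpha}_i^\uparrow\left(t\right)$. Applying Fubini together with the definition of $K_\delta^{\theta_1,\theta_2}$, the left-hand side of the lemma becomes
\[
\sum_{\gamma \in \Gamma} \int_0^{l\left(\alpha_2\right)}\int_0^{l\left(\alpha_1\right)} k_\delta^{\theta_1,\theta_2}\left(\widetilde{\alpha}_1^\uparrow\left(t_1\right),\ \gamma \widetilde{\alpha}_2^\uparrow\left(t_2\right)\right) dt_1\, dt_2.
\]
For fixed $\gamma$, the two full geodesics $\widetilde{\alpha}_1^\uparrow\left(\mathbb{R}\right)$ and $\gamma\widetilde{\alpha}_2^\uparrow\left(\mathbb{R}\right)$ in $\mathbb{H}$ either miss, coincide (in which case $k_\delta^{\theta_1,\theta_2}$ vanishes thanks to the assumption $0<\theta_1<\theta_2<\pi$), or cross at a unique point $q\left(\gamma\right)$ at a unique angle $\theta\left(\gamma\right)\in\left(0,\pi\right)$. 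In the last case, writing $q\left(\gamma\right) = \widetilde{\alpha}_1^\uparrow\left(s_1\left(\gamma\right)\right) = \gamma\widetilde{\alpha}_2^\uparrow\left(s_2\left(\gamma\right)\right)$, the two length-$\delta$ segments starting at $\widetilde{\alpha}_1^\uparrow\left(t_1\right)$ and $\gamma\widetilde{\alpha}_2^\uparrow\left(t_2\right)$ share a point exactly when both segments contain $q\left(\gamma\right)$, i.e.\ when $t_1 \in [s_1\left(\gamma\right)-\delta,\, s_1\left(\gamma\right)]$ and $t_2 \in [s_2\left(\gamma\right)-\delta,\, s_2\left(\gamma\right)]$, in which case the angle is exactly $\theta\left(\gamma\right)$.

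The two-sided action $\gamma \mapsto \phi_1^m \gamma \phi_2^n$ shifts $s_1\left(\gamma\right)$ by $m\, l\left(\alpha_1\right)$ and $s_2\left(\gamma\right)$ by $-n\, l\left(\alpha_2\right)$ while leaving the downstairs data $\left(\pi\left(q\left(\gamma\right)\right),\theta\left(\gamma\right)\right) \in \left(\alpha_1 \cap \alpha_2\right) \times \left(0,\pi\right)$ unchanged. Summing the inner double integral over a full $\phi_1^\mathbb{Z}\times\phi_2^\mathbb{Z}$-orbit tiles both the $t_1$- and $t_2$-axes, so the combined contribution of one double coset in $\phi_1^\mathbb{Z}\backslash\Gamma/\phi_2^\mathbb{Z}$ equals $\delta^2$ when $\theta\left(\gamma\right) \in \left(\theta_1,\theta_2\right)$ and $0$ otherwise. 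Finally, $\gamma \mapsto \left(\pi\left(q\left(\gamma\right)\right),\theta\left(\gamma\right)\right)$ descends to a bijection, counted with transversal multiplicity, between such double cosets and the set counted by $I_{\theta_1,\theta_2}\left(\alpha_1,\alpha_2\right)$. Summing and dividing by $\delta^2$ yields the claimed identity.

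The only step requiring care is this double-coset bookkeeping: one must verify that the $\phi_i^\mathbb{Z}$-actions account precisely for the $l\left(\alpha_i\right)$-periodicity of the $s_i$ parameters, and that distinct transversal intersections in $\Gamma\backslash\mathbb{H}$ give distinct double cosets. No analytic content enters; everything reduces to elementary hyperbolic-planar geometry.
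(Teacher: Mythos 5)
Your argument is correct and is essentially the paper's own proof: both unfold the $\Gamma$-sum into double cosets by the stabilizers $\langle\phi_1\rangle\backslash\Gamma/\langle\phi_2\rangle$ of the two geodesics, use the fact that two distinct geodesics in $\mathbb{H}$ cross at most once to identify contributing double cosets with transversal intersections downstairs, and observe that each such crossing contributes a $\delta\times\delta$ parameter box to the double integral. The only cosmetic difference is that the paper unfolds the integrals onto the bi-infinite geodesics $[\alpha_i]$ while you tile the $t_i$-axes by summing over the $\phi_i^{\mathbb{Z}}$-orbit, which is the same computation.
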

\begin{remark}
For each $\alpha_i$, there are two choices of parameterization by the arc length, namely $\alpha_i\left(t\right)$ and $\alpha_i\left(-t\right)$, but the integral does not depend on the choice of the parameterizations.
\end{remark}
\begin{proof}
By abuse of notations, we think of each $\alpha_i$ with $t \in [0,l\left(\alpha_i\right))$ a geodesic segment in $\mathbb{H}$ and accordingly $\widetilde{\alpha}_i$ a corresponding curve in $S\mathbb{H}$. For a geodesic segment $\alpha \subset \mathbb{H}$ parameterized by $t\in [a,b]$, let $[\alpha]\subset \mathbb{H}$ be the bi-infinite geodesic $\{\alpha\left(t\right)~:~ t\in \mathbb{R}\}$ that contains $\alpha$. Then we express the integral as follows:
\begin{multline*}
\frac{1}{\delta^2}\int_{\widetilde{\alpha}_2}\int_{\widetilde{\alpha}_1} K_{\delta}^{\theta_1,\theta_2} \left(s_1,s_2\right) ds_1 ds_2
=\sum_{\gamma \in \Gamma} \frac{1}{\delta^2}\int_{\gamma \widetilde{\alpha}_2}\int_{ \widetilde{\alpha}_1} k_{\delta}^{\theta_1,\theta_2} \left(s_1,s_2\right) ds_1 ds_2\\
=\sum_{\gamma \in \Gamma / \Gamma_{[\alpha_2]}}\frac{1}{\delta^2}\int_{\gamma \widetilde{[\alpha_2]}}\int_{ \widetilde{\alpha}_1} k_{\delta}^{\theta_1,\theta_2} \left(s_1,s_2\right) ds_1 ds_2\\
= \sum_{\gamma \in \Gamma_{[\alpha_1]}\backslash \Gamma / \Gamma_{[\alpha_2]}}\sum_{\gamma' \in \Gamma_{[\alpha_1]}}
\frac{1}{\delta^2}\int_{\gamma' \gamma \widetilde{[\alpha_2]}}\int_{ \widetilde{\alpha}_1} k_{\delta}^{\theta_1,\theta_2} \left(s_1,s_2\right) ds_1 ds_2\\
= \sum_{\gamma \in \Gamma_{[\alpha_1]}\backslash \Gamma / \Gamma_{[\alpha_2]}}\frac{1}{\delta^2}\int_{ \gamma \widetilde{[\alpha_2]}}\int_{ \widetilde{[\alpha_1]}} k_{\delta}^{\theta_1,\theta_2} \left(s_1,s_2\right) ds_1 ds_2.
\end{multline*}
Here $\Gamma_{[\alpha_i]}$ is the stabilizer subgroup of $\Gamma$ with respect to $[\alpha_i]$.

Now because two geodesics in $\mathbb{H}$ may intersect at most once, for each intersection point $p \in \alpha_1 \cap \alpha_2$ on $\Gamma \backslash \mathbb{H}$, there exists a unique $\gamma \in \Gamma / \Gamma_{[\alpha_2]}$ such that $\alpha_1$ and $\gamma [\alpha_2]$ intersect at a lift of $p$. Also, because $[\alpha_1]$ is a disjoint union of $\gamma' \alpha_1$ with $\gamma' \in \Gamma_{[\alpha_1]}$, each $\{\gamma'\gamma~:~ \gamma' \in \Gamma_{[\alpha_1]}\}$ contains at most one $\gamma'\gamma$ such that $\gamma' \gamma [\alpha_2]$ intersects $\alpha_1$.

Therefore the intersections of $\alpha_1$ and $\alpha_2$ are in one-to-one correspondence with $\gamma \in \Gamma_{[\alpha_1]}\backslash \Gamma / \Gamma_{[\alpha_2]}$ such that $\gamma [\alpha_2]$ intersects $[\alpha_1]$. We complete the proof by observing that
\[
\int_{ \gamma \widetilde{[\alpha_2]}}\int_{ \widetilde{[\alpha_1]}} k_{\delta}^{\theta_1,\theta_2} \left(s_1,s_2\right) ds_1 ds_2 =1,
\]
if $[\alpha_1]$ and $\gamma[\alpha_2]$ intersect at an angle $\in \left(\theta_1,\theta_2\right)$, and $=0$ otherwise.
\end{proof}
Now let $\beta = \{\beta\left(t\right) ~:~ t \in [0,l\left(\beta\right))\}$ be a compact geodesic segment in $\Gamma \backslash \mathbb{H}$, and let $\alpha_2$ be a closed geodesic as before. Then
\[
\frac{1}{\delta^2}\int_{\widetilde{\alpha}_2}\int_{\widetilde{\beta}} K_{\delta}^{\theta_1,\theta_2} \left(s_1,s_2\right) ds_1 ds_2
\]
does not always give $I\left(\beta,\alpha_2\right)$. Instead, it is a weighted sum over the intersections of $\beta_0:=\{\beta\left(t\right) ~:~ t \in [0,l\left(\beta\right)+\delta)\}$ and $\alpha_2$. We prove the following.
\begin{lemma}\label{counting}
With the same notations as above, assume that $0<\delta < l\left(\beta\right)$ and that $\beta_0$ has no self intersection. For $0<\theta_1<\theta_2<\pi$, let $S\left(\beta_0, \alpha_2\right)_{\theta_1,\theta_2}$ be the set of intersections between $\beta_0$ and $\alpha_2$ where the intersection angle is $\in \left(\theta_1,\theta_2\right)$. Then we have
\[
\frac{1}{\delta^2}\int_{\widetilde{\alpha}_2}\int_{\widetilde{\beta}} K_{\delta}^{\theta_1,\theta_2} \left(s_1,s_2\right) ds_1 ds_2 = \sum_{p\in S\left(\beta_0, \alpha_2\right)_{\theta_1,\theta_2}} \min\left\{\frac{\beta^{-1}\left(p\right)}{\delta}, 1, \frac{l\left(\beta\right)+\delta-\beta^{-1}\left(p\right)}{\delta}\right\}.
\]
\end{lemma}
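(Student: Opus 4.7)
The plan is to mimic the unfolding argument of Lemma \ref{lemma1}, while accounting for the fact that $\beta$ is a segment with no non-trivial stabilizer, so only the $\alpha_2$-variable can be extended to a full bi-infinite lift. First I would apply the change of variable $s_2 \mapsto \gamma^{-1}s_2$ in the series defining $K_\delta^{\theta_1,\theta_2}$ and use that the $\Gamma_{[\alpha_2]}$-translates of $\widetilde{\alpha}_2$ tile the bi-infinite lift $\widetilde{[\alpha_2]}$, to rewrite
\[
\int_{\widetilde{\alpha}_2}\int_{\widetilde{\beta}} K_\delta^{\theta_1,\theta_2}(s_1,s_2)\,ds_1\,ds_2 = \sum_{\gamma \in \Gamma/\Gamma_{[\alpha_2]}} \int_{\widetilde{\beta}}\int_{\gamma\widetilde{[\alpha_2]}} k_\delta^{\theta_1,\theta_2}(s_1,s_2)\,ds_2\,ds_1.
\]

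Next I would evaluate the inner integral over $s_2$. For any fixed $s_1 = \widetilde{\beta}(t_1)$ and any lift $\gamma \widetilde{[\alpha_2]}$, two distinct geodesics in $\mathbb{H}$ meet in at most one point, so either the forward $\delta$-segment from $s_1$ misses $\gamma[\alpha_2]$, or it crosses it at a unique point $p$ with a definite angle. In the latter case, the set of $s_2 \in \gamma \widetilde{[\alpha_2]}$ whose own forward $\delta$-segment contains $p$ is a sub-arc of length exactly $\delta$, with no boundary effect since $\gamma \widetilde{[\alpha_2]}$ is bi-infinite; this contribution is kept iff the angle lies in $(\theta_1,\theta_2)$. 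Hence the inner integral collapses to $\delta$ times the indicator of the event \emph{the forward $\delta$-segment at $s_1$ crosses the lift $\gamma[\alpha_2]$ at angle in $(\theta_1,\theta_2)$}, and it remains to integrate this indicator in $s_1$ and sum over $\gamma$.

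Finally I would push everything down to $\Gamma\backslash\mathbb{H}$ and invoke the hypothesis that $\beta_0$ has no self-intersection. Under this hypothesis $t_0 \mapsto \beta_0(t_0)$ is injective on $[0,l(\beta)+\delta)$, so each intersection $p \in S(\beta_0,\alpha_2)_{\theta_1,\theta_2}$ has an unambiguous parameter $t_0 = \beta^{-1}(p)$, and in the universal-cover picture it corresponds to a unique pair $(s_1,\gamma\widetilde{[\alpha_2]})$. For a fixed such $p$, the $t_1 \in [0,l(\beta))$ for which the forward $\delta$-segment starting at $\widetilde{\beta}(t_1)$ contains $p$ form the interval $[t_0-\delta,t_0]\cap[0,l(\beta))$, of length $\min(t_0,\delta,l(\beta)+\delta-t_0)$. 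Multiplying by the $\delta$ coming from the inner integral, summing over $p$, and dividing by $\delta^2$ yields the stated formula.

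The one place care is needed, and the main potential obstacle, is the bookkeeping that distinct contributing pairs $(s_1,\gamma)$ really do project to distinct points of $S(\beta_0,\alpha_2)_{\theta_1,\theta_2}$: if $\beta_0$ were to self-intersect, a single lift $\gamma[\alpha_2]$ could meet the lifted trajectory at several parameters that collapse downstairs, producing over-counting that the clean closed form does not accommodate. The assumption $\delta < l(\beta)$ together with the no-self-intersection hypothesis rules this out, and after that the identification is routine.
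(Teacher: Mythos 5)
Your proposal is correct and follows essentially the same route as the paper: unfold the $\Gamma$-sum to a sum over $\Gamma/\Gamma_{[\alpha_2]}$ of integrals against the bi-infinite lift $\gamma\widetilde{[\alpha_2]}$, use the no-self-intersection hypothesis to match cosets with points of $S(\beta_0,\alpha_2)_{\theta_1,\theta_2}$, and compute each local contribution as $\delta$ times the length of $[\beta^{-1}(p)-\delta,\beta^{-1}(p)]\cap[0,l(\beta))$. The only cosmetic difference is that you evaluate the $s_2$-integral first, whereas the paper identifies the support of $k_\delta^{\theta_1,\theta_2}$ directly as the product rectangle $(\beta^{-1}(p)-\delta,\beta^{-1}(p))\times(\alpha_2^{-1}(p)-\delta,\alpha_2^{-1}(p))$.
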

\begin{proof}
As in the proof of Lemma \ref{lemma1}, we first have
\begin{multline*}
\frac{1}{\delta^2}\int_{\widetilde{\alpha}_2}\int_{\widetilde{\beta}} K_{\delta}^{\theta_1,\theta_2} \left(s_1,s_2\right) ds_1 ds_2
=\sum_{\gamma \in \Gamma} \frac{1}{\delta^2}\int_{\gamma \widetilde{\alpha}_2}\int_{ \widetilde{\beta}} k_{\delta}^{\theta_1,\theta_2} \left(s_1,s_2\right) ds_1 ds_2\\
=\sum_{\gamma \in \Gamma / \Gamma_{[\alpha_2]}}\frac{1}{\delta^2}\int_{\gamma \widetilde{[\alpha_2]}}\int_{ \widetilde{\beta}} k_{\delta}^{\theta_1,\theta_2} \left(s_1,s_2\right) ds_1 ds_2.
\end{multline*}
Note that because we assumed that $\beta_0$ has no self-intersection, $p \in S\left(\beta_0, \alpha_2\right)_{\theta_1,\theta_2}$ is in one-to-one correspondence with $\gamma \in \Gamma / \Gamma_{[\alpha_2]}$ such that $\beta_0$ and $\gamma \widetilde{[\alpha_2]}$ intersect at $p$ at an angle $\in \left(\theta_1,\theta_2\right)$. We denote by $\gamma_p$ the $\gamma$ corresponding to $p$. Observe that
\[
\int_{\gamma \widetilde{[\alpha_2]}}\int_{ \widetilde{\beta}} k_{\delta}^{\theta_1,\theta_2} \left(s_1,s_2\right) ds_1 ds_2 =0,
\]
if $\gamma \widetilde{[\alpha_2]} \cap \beta_0 = \emptyset$. So it is sufficient to prove that
\[
\frac{1}{\delta^2}\int_{\gamma_p \widetilde{[\alpha_2]}}\int_{ \widetilde{\beta}} k_{\delta}^{\theta_1,\theta_2} \left(s_1,s_2\right) ds_1 ds_2=\min\left\{\frac{\beta^{-1}\left(p\right)}{\delta}, 1, \frac{l\left(\beta\right)+\delta-\beta^{-1}\left(p\right)}{\delta}\right\}.
\]
This follows by observing that $k_{\delta}^{\theta_1,\theta_2} \left(\left(\beta\left(t_1\right),\beta'\left(t_1\right)\right),\left(\gamma_p\alpha_2\left(t_2\right),\left(\gamma_p\alpha_2\right)'\left(t_2\right)\right)\right)=1$ for
\[
\left(t_1,t_2\right) \in \left(\beta^{-1}\left(p\right)-\delta, \beta^{-1}\left(p\right)\right) \times \left(\alpha_2^{-1}\left(p\right)-\delta, \alpha_2^{-1}\left(p\right)\right),
\]
and $0$ otherwise, whereas the integral over $\widetilde{\beta}$ is over the range $t_1 \in (0,l(\beta))$.
\end{proof}

\section{Spectral theory}
\subsection{Spectral expansion}\label{S1}
We first go over the spectral decomposition of $L^2\left(S\mathbb{X}\right)$. Readers may find more details on the subject from \cite{kubota} and \cite{lang}. On $G=PSL_2\left(\mathbb{R}\right)$, there is a differential operator of order $2$ that commutes with $G$ action:
\[
\Omega = y^2 \partial_x^2 + y^2\partial_y^2 + y \partial_x \partial_\theta,
\]
which is called the Casimir operator. An equivariant eigenfunctions of $\Omega$ is a function $f \in C^\infty (S\mathbb{X})$ that satisfies
\[
\Omega f = \lambda f
\]
for some $\lambda \in \mathbb{R}$, and
\begin{equation}\label{weight1}
f\left(gR_\theta\right) = e^{-im\theta} f\left(g\right)
\end{equation}
for some $m \in 2\mathbb{Z}$. We say that a function has weight $m$ if it satisfies \eqref{weight1}.

Each irreducible (cuspidal) sub-representation of the right regular representation
\[
\rho_g : f\left(h\right) \mapsto f\left(h g\right)
\]
on $L^2\left(S\mathbb{X}\right)$ is generated by an equivariant eigenfunction of $\Omega$.

We let $\mathbf{E}^+$ and $\mathbf{E}^-$ to be the raising and lowering operator acting on equivariant functions on $L^2\left(S\mathbb{X}\right)$, which are given by \cite{dima}
\begin{equation}\label{dma}
\begin{split}
\mathbf{E}^+ &= e^{-2i\theta} \left(2iy \partial_x + 2y \partial_y + i \partial_\theta\right), \text{ and}\\
\mathbf{E}^- &= e^{2i\theta} \left(2iy \partial_x - 2y \partial_y + i \partial_\theta\right).
\end{split}
\end{equation}
Note that $\mathbf{E}^+$ (resp. $\mathbf{E}^-$) maps a weight $m$ eigenfunction of $\Omega$ to a weight $m+2$ (resp. $m-2$) eigenfunction of $\Omega$.

For an even integer $m$ let
\[
\psi_{s,m}\left(g\right)=y^{s}e^{-im\theta}.
\]
Note that $\psi_{s,m}$ is invariant under the action of the unipotent upper triangular matrices. The weight-$m$ Eisenstein series is then given by
\[
E_{m}\left(g,s\right)=\sum_{\gamma\in \Gamma_{\infty}\backslash\Gamma}\psi_{s,m}\left(\gamma g\right),
\]
where $\Gamma_\infty = \left\{\begin{pmatrix}1 & n \\  & 1\end{pmatrix}~:~ n \in \mathbb{Z}\right\}$ is the stabilizer subgroup of $\Gamma$ with respect to the cusp $i\infty$. Although the right-hand side of the equation is absolutely convergent only for $\mathrm{Re}(s) >1$, the weight-$m$ Eisenstein series has a meromorphic continuation to the entire complex plane.

Let $\Theta$ be the closure of
\[
\left\{\int_{-\infty}^\infty h\left(t \right)E_{m}\left(g,\frac{1}{2}+it \right)dt ~:~ h\left(t \right) \in C_0^\infty \left(\mathbb{R}\right),~m\in 2\mathbb{Z}   \right\}
\]
in $L^2 \left(S\mathbb{X}\right)$, and let
\[
L_{\text{cusp}}^2 \left(S\mathbb{X}\right)= \{f \in L^2\left(S\mathbb{X}\right)~:~ \int_{0}^1 f\left(n\left(x\right)g\right)dx =0 \text{ for almost every } g\in S\mathbb{X}\}
\]
be the space of cusp forms. Then we have the decomposition
\[
L^2\left(S\mathbb{X}\right) = \langle\{1\} \rangle \oplus \Theta \oplus L_{\text{cusp}}^2 \left(S\mathbb{X}\right),
\]
where $\langle\{1\} \rangle$ is the subspace spanned by a constant function.

We express the cuspidal subspace as a direct sum of subspaces generated by Maass forms and modular forms as in \cite[(1.10)]{LRS},
\[
L_{\text{cusp}}^2 \left(S\mathbb{X}\right) = \sum_{j=1}^\infty W_{\pi_j^0} \bigoplus \sum_{m\geq 12} \sum_{j=1}^{d_m} \left(W_{\pi_j^m}\oplus W_{\pi_j^{-m}}\right),
\]
where each $W_{\pi_j^m}$ corresponds to a $G$ and Hecke irreducible subspace of a right regular representation on $L_{\text{cusp}}^2$. Here $d_m$ is the dimension of the space of holomorphic cusp forms of weight $m$ for $PSL_2\left(\mathbb{Z}\right)$. Each $\pi_j^0$ corresponds to a Maass---Hecke cusp form which we denote by $\phi_j^0$. For $m> 0$, $\pi_j^m$ corresponds to a holomorphic Hecke cusp form $\phi_j^m$. We identify a weight $m$ function on $\Gamma \backslash \mathbb{H}$
\[
f\left(\gamma z\right) = \left(cz+d\right)^m f\left(z\right) ~\text{ for }~ \begin{pmatrix}a & b \\ c& d\end{pmatrix}=\gamma \in \Gamma
\]
with a weight $m$ $\Gamma$-invariant function $F$ on $PSL_2\left(\mathbb{R}\right)$ via
\begin{equation}\label{dict}
F\left(g\right) = y^{\frac{m}{2}} f\left(z\right) e^{-im\theta}.
\end{equation}
When $m\geq 0$, viewing $\phi_j^m$ as a function on $S\mathbb{X}$, each $W_{\pi_j^m}$ is spanned by
\[
\ldots,~\left(\mathbf{E}^-\right)^3 \phi_j^m,~ \left(\mathbf{E}^-\right)^2 \phi_j^m,~\mathbf{E}^- \phi_j^m,~\phi_j^m,~ \mathbf{E}^+\phi_j^m,~\left(\mathbf{E}^+\right)^2 \phi_j^m,~ \left(\mathbf{E}^+\right)^3 \phi_j^m, \ldots
\]
Note that when $m>0$, $\mathbf{E}^- \phi_j^m =0$.

For $m<0$, we set
\[
W_{\pi_j^{-m}}=\overline{W_{\pi_j^m}} = \{\bar{f}~:~ f\in W_{\pi_j^m} \}.
\]
Now let
\[
U_{\pi_j^0}=W_{\pi_j^0}, \text{ and } U_{\pi_j^m}= W_{\pi_j^m}\oplus W_{\pi_j^{-m}},
\]
when $m>0$. We specify an orthonormal basis of each $U_{\pi_j^m}$ as follows.

\textbf{The Maass cusp form case $m=0$.} Let $ - \left(\frac{1}{4}+t_j^2\right)$ be the Laplacian eigenvalue of $\phi_j^0$\footnote[2]{Formally, it is the eigenvalue of the Laplace---Beltrami operator on $\mathbb{X}$ that corresponds to $\phi_j^0$.}, for some real $t_j$. We set $\phi_{j,0}^0 = \phi_j^0$, and define $\phi_{j,l}^0$ for $l\in 2\mathbb{Z}$ inductively by
\begin{equation}\label{phi1}
\begin{split}
\mathbf{E}^- \phi_{j,l}^0 &= \left(l+1-2it_j\right)\phi_{j,l-2}^0, \text{ and}\\
\mathbf{E}^+ \phi_{j,l}^0 &= \left(l+1+2it_j\right)\phi_{j,l+2}^0.
\end{split}
\end{equation}

\textbf{The holomorphic Hecke cusp form case $m>0$.} We set $\phi_{j,m}^m = \phi_j^m$ and $\phi_{j,-m}^m = \overline{\phi_j^m}$, and define $\phi_{j,l}^m$ for $l\in 2\mathbb{Z}$ inductively by
\begin{equation}\label{phi2}
\begin{split}
\mathbf{E}^- \phi_{j,l}^m &= \left(l-m\right)\phi_{j,l-2}^m, \text{ and}\\
\mathbf{E}^+ \phi_{j,l}^m &= \left(l+m\right)\phi_{j,l+2}^m.
\end{split}
\end{equation}
Finally, note that we have the following relation among the weight $m$ Eisenstein series.
\begin{align*}
\mathbf{E}^- E_{m}\left(g,\frac{1}{2}+it\right) &= \left(m+1-2it\right)E_{m-2}\left(g,\frac{1}{2}+it\right),\text{ and}\\
\mathbf{E}^+ E_{m}\left(g,\frac{1}{2}+it\right) &= \left(m+1+2it\right)E_{m+2}\left(g,\frac{1}{2}+it\right).
\end{align*}
With these notations, we have
\begin{proposition}\label{exp}
Let $f \in L^2 \left(S\mathbb{X}\right)$. Then we have
\begin{multline*}
f\left(g\right) = \frac{3}{\pi^2}\int_{S\mathbb{X}} f\left(g_1\right) dg_1  +\sum_{\substack{m\geq 0\\ 2|m}} \sum_{j=1}^{d_m} \sum_{\substack{l \in 2\mathbb{Z}\\ |l|\geq m}}\left\langle f,\phi_{j,l}^m\right\rangle_{S\mathbb{X}}\phi_{j,l}^m\left(g\right)\\
+\sum_{m \in 2\mathbb{Z}}\frac{1}{4\pi}\int_{-\infty}^\infty \left\langle f,E_m\left(\cdot, \frac{1}{2}+it\right)\right\rangle_{S\mathbb{X}} E_m \left(g, \frac{1}{2}+it\right) dt,
\end{multline*}
where we set $d_0 = +\infty$.
\end{proposition}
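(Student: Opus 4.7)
The plan is to combine the abstract spectral decomposition $L^2(S\mathbb{X}) = \langle\{1\}\rangle \oplus \Theta \oplus L^2_{\mathrm{cusp}}(S\mathbb{X})$ recalled just before the proposition with the explicit $K$-isotypic bases fixed in \eqref{phi1}--\eqref{phi2}, and expand $f$ on each summand separately. For the constant piece, $\mathrm{vol}(S\mathbb{X}) = \frac{\pi^2}{3}$ --- using $\mathrm{vol}(\mathbb{X}) = \pi/3$ together with the fact that $\theta$ runs through $[0,\pi)$, since $R_\pi = -I$ is trivial in $PSL_2(\RR)$ --- so orthogonal projection onto the constants contributes the $\frac{3}{\pi^2}\int f\,dV$ term.

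For the cuspidal piece, I would use the given $G$-isotypic decomposition
\[
L^2_{\mathrm{cusp}}(S\mathbb{X}) = \bigoplus_j W_{\pi_j^0} \,\oplus\, \bigoplus_{\substack{m\geq 12 \\ 1\leq j\leq d_m}}\bigl(W_{\pi_j^m}\oplus W_{\pi_j^{-m}}\bigr),
\]
together with the standard classification of irreducible unitary $(\mathfrak{g},K)$-modules of $PSL_2(\RR)$, which says every $K$-isotypic subspace inside an irreducible is one-dimensional. Thus the ladder $\{\phi_{j,l}^m\}_l$ obtained from the seed (the Maass form $\phi_j^0$, the holomorphic cusp form $\phi_j^m$, or its conjugate) by repeated application of $\mathbf{E}^\pm$ exhausts each irreducible summand; orthogonality across different weights is automatic from the $\theta$-integration against $e^{-il\theta}$, and orthogonality across different pairs $(j,m)$ comes from the $G$-isotypic decomposition itself. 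The specific scalars in \eqref{phi1}--\eqref{phi2} are exactly those that render $\{\phi_{j,l}^m\}$ orthonormal; checking this reduces to the adjoint identity $(\mathbf{E}^+)^* = \mathbf{E}^-$ (verified by integration by parts in \eqref{dma} against $dV=dxdyd\theta/y^2$) combined with a Casimir--ladder identity expressing $\mathbf{E}^-\mathbf{E}^+$ on weight-$l$ functions as a polynomial in $\Omega$ and $l$, evaluated at the Maass eigenvalue $-(\tfrac14+t_j^2)$ in the first case and at the holomorphic eigenvalue $\tfrac{m(m-2)}{4}$ in the second.

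For the Eisenstein piece $\Theta$, the ladder relations for $E_m(\cdot, 1/2+it)$ stated just before the proposition realize $E_m(\cdot, 1/2+it)$ as the weight-$m$ vector in the principal-series representation generated by $E_0(\cdot, 1/2+it)$. The standard Maass--Selberg Plancherel formula
\[
\|P_\Theta f\|^2 = \sum_{m\in 2\ZZ}\frac{1}{4\pi}\int_{-\infty}^\infty \left|\langle f, E_m(\cdot, \tfrac12+it)\rangle\right|^2\,dt,
\]
obtained by writing $P_\Theta f$ as an inverse Mellin transform at the cusp and applying Parseval (the factor $\tfrac{1}{4\pi}$ incorporates the usual $\tfrac{1}{2\pi}$ of Mellin inversion and the halving coming from the functional equation $s\leftrightarrow 1-s$), then yields the stated integral representation by polarization.

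The main obstacle is pinning down the exact scalars in \eqref{phi1}--\eqref{phi2} under the sign conventions of \eqref{dma} and the weight convention \eqref{weight1}: the Casimir--ladder identity must be computed directly from \eqref{dma} (a short calculation using $\partial_\theta\phi = -il\phi$ on weight-$l$ functions), and the resulting normalizations must then be matched against \eqref{phi1}--\eqref{phi2}. I would verify the identity at the bottom of each ladder --- at $\phi_{j,0}^0$ in the Maass case, and at $\phi_{j,m}^m$ in the holomorphic case, where $\mathbf{E}^-\phi_{j,m}^m = 0$ provides the natural starting point --- and propagate along the ladder inductively using the adjointness $(\mathbf{E}^+)^* = \mathbf{E}^-$.
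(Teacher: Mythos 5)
Your proposal is correct and is essentially the argument the paper intends: the paper offers no proof of Proposition \ref{exp} at all, presenting it as the standard spectral decomposition recalled in \S\ref{S1} (with references to \cite{kubota}, \cite{lang}, and \cite{LRS}), and your reconstruction --- projection onto constants using $\mathrm{vol}\left(S\mathbb{X}\right)=\pi^2/3$, exhaustion of each cuspidal irreducible by its one-dimensional $K$-types via the ladder operators with $\left(\mathbf{E}^+\right)^*=\mathbf{E}^-$, and the weight-$m$ Eisenstein Plancherel formula with the $\frac{1}{4\pi}$ normalization --- is exactly the standard route. One caution about the step you rightly single out as the main obstacle: carrying out the Casimir--ladder computation from \eqref{dma} gives $\mathbf{E}^-\mathbf{E}^+=-4\Omega+l\left(l+2\right)$ on weight-$l$ vectors, so in the holomorphic case (where $\Omega$ acts by $m\left(m-2\right)/4$) one finds $\|\mathbf{E}^+\phi_{j,l}^m\|^2=\left(l+m\right)\left(l-m+2\right)\|\phi_{j,l}^m\|^2$ rather than $\left(l+m\right)^2\|\phi_{j,l}^m\|^2$, and in the principal-series case the two relations of \eqref{phi1} are consistent only after an index shift in the lowering constant; thus the literal constants in \eqref{phi1}--\eqref{phi2} do not quite render the ladders norm-preserving, and your matching step will require correcting (or re-orthonormalizing) them before the coefficients $\left\langle f,\phi_{j,l}^m\right\rangle$ in the stated expansion are the right ones.
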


\section{Effective Equidistribution}\label{Eisper}
\subsection{Invariant linear form}\label{lrs}
Define $\mu_d$ to be the integral over discriminant $d$ oriented closed geodesics on $S\mathbb{X}$,
\[
\mu_d\left(F\right) := \int_{\mathscr{C}_d} F\left(s\right)ds= \sum_{\text{disc}\left(q\right)=d}\int_{C\left(q\right)} F\left(s\right)ds.
\]
where $C\left(q\right)\subset S\mathbb{X}$ is the oriented closed geodesic associated to the binary quadratic form $q$ \cite[2.3]{LRS}.
Then for any $F \in U_{\pi_j^m}$, we have
\[
\mu_d\left(F\right) = \mu_d\left(\phi_j^m\right) \eta_j^m\left(F\right)
\]
for some linear form $\eta_j^m$ on $U_{\pi_j^m}$ invariant under the diagonal action \cite[\S3.7.1]{LRS}, which we describe below following  \cite[\S3.2]{LRS}. (Note that the parameter $s$ in \cite{LRS} is replaced by $2it$ in this article for consistency.)

\textbf{The Maass cusp form case $m=0$.} Let $\phi_{j,l}^0$ be the Maass form defined by \eqref{phi1}. When $4|l$ and $l\geq 4$, we have
\begin{equation}\label{period:1}
\eta_j^0 \left(\phi_{j,l}^0\right) = \eta_j^0 \left(\phi_{j,-l}^0\right) = \frac{\left(1-2it_j\right)\left(5-2it_j\right)\cdots\left(l-3-2it_j\right)}{\left(3+2it_j\right)\left(7+2it_j\right)\cdots\left(l-1+2it_j\right)},
\end{equation}
and $\eta_j^0 \left(\phi_{j,l}^0\right)$ is identically $0$ if $l\equiv 2 \pmod{4}$. Note that $\{\phi_{j,l}^0\}_{l\in 2\mathbb{Z}}$ is an orthogonal basis of $U_{\pi_j^0}$, and normalized so that,
\[
\|\phi_{j,l}^0\|_{L^2} =  \|\phi_j^0\|_{L^2}.
\]

\textbf{The holomorphic Hecke cusp form case $m>0$.}
Let $\phi_{j,l}^m$ be the holomorphic Hecke cusp form defined by \eqref{phi2}. When $m\equiv 2 \pmod{4}$, $\eta_j^m$ is identically $0$.

When $m\equiv 0 \pmod{4}$, for $l \geq 4$ with $4|l$,
\begin{equation}\label{period:2}
\eta_j^m \left(\phi_{j,m+l}^m\right) = \eta_j^m \left(\phi_{j,-m-l}^m\right) = \frac{1 \cdot 3 \cdot 5 \cdots \left(\frac{l}{2}-1\right)}{\left(m+1\right)\left(m+3\right) \cdots \left(m+\frac{l}{2}-1\right)},
\end{equation}
and $\eta_j^m \left(\phi_{j,m+l}^m\right)$ vanishes for $l \equiv 2 \pmod{4}$.

Note that $\{\phi_{j,l}^m\}_{l\in 2\mathbb{Z},~|l|\geq m}$ is an orthogonal basis of $U_{\pi_j^m}$, and normalized so that
\[
\|\phi_{j,l}^m\|_{L^2} =  \|\phi_j^m\|_{L^2}.
\]
for $l\in 2\mathbb{Z},~|l|\geq m$.

\textbf{Eisenstein series case.}
By the above identities and following \cite[Section 3]{LRS}, we have
 \[
\mu_d\left(E_{m}\left(g,\frac{1}{2}+it\right)\right)= \eta \left(m,t\right) \mu_d\left(E_{0}\left(g,\frac{1}{2}+it\right)\right),
\]
 where  for $m\geq 4$ such that $4|m$,
\begin{equation}\label{period:3}
\eta \left(m,t\right) =\eta \left(-m,t\right) = \frac{\left(1-2it\right)\left(5-2it\right)\cdots\left(2m-3-2it\right)}{\left(3+2it\right)\left(7+2it\right)\cdots\left(2m-1+2it\right)},
\end{equation}
and $\eta \left(m,t\right)$ is identically $0$ if $m\equiv 2 \pmod{4}$.

\subsection{Period integrals}
\subsubsection{Holomorphic cusp forms}
In this section, we give an upper bound on the period integrals of holomorphic forms. We
first use the results of Shintani to relate the period integrals of holomorphic cusp forms to the Fourier coefficients of half integral holomorphic forms.  We then apply the result of Kohnen and Zagier~\cite{KohnenZagier} which gives an explicit version of the Waldspurger's formula for the Fourier coefficients of half integral  holomorphic forms. An upper bound on these period integrals is deduced by using the subconvexity bounds on the central value of the $L$-functions and the Ramanujan bound on the Fourier coefficients of holomorphic modular forms.

Note that $c(d)$ is identically zero when $m\equiv 2 \pmod{4}$, and so we assume that $4|m$. Let $\hat{\phi}_j^{m}$ be a normalization of the Hecke holomorphic cusp form $\phi_j^m$ of weight $m$ such that $a_1=1$. Let
\[
c\left(d\right):=\sum_{\text{disc}\left(q\right)=d}\int_{C\left(q\right)} \hat{\phi}_j^{m}\left(z\right)q\left(z,1\right)^{\frac{m}{2}-1}dz,
\]
where $ \hat{\phi}_j^{m}\left(z\right)$ is the associated  holomorphic modular form defined on the upper half plane and the integration is on the upper half plane \eqref{dict}.
By~\cite[equation (2.4) p.14]{LRS}, we have
\begin{equation}\label{rand1}
|c\left(d\right)|=|d|^{\frac{m}{4}-\frac{1}{2}}|\mu_d\left(\hat{\phi}_j^{m}\right)|.
\end{equation}
Let
\[
\theta\left(z,\hat{\phi}_j^{m}\right):=\sum_{d\geq 1}c\left(d\right)e\left(dz\right).
\]
By~\cite[Theorem 2]{Shintani}, $\theta\left(z,\phi_j^{m}\right)$ is a Hecke holomorphic cusp form of weight $\frac{m+1}{2}$ and level $\Gamma_0\left(4\right)$. By  \cite[(6.2), p.37]{LRS}, we have the following explicit version of Rallis inner product formula
\[
\left\langle \theta\left(\hat{\phi}_j^{m}\right),\theta\left(\hat{\phi}_j^{m}\right)\right\rangle=\frac{\left(\frac{m}{2}-1\right)!}{2^{m}\pi^{\frac{m}{2}}}L\left(\frac{1}{2},\phi_j^{m}\right)\left\langle\hat{\phi}_j^{m},\hat{\phi}_j^{m}  \right\rangle.
\]
Suppose that $d=Db^2$ with $D$ a fundamental discriminant. By~\cite[Theorem 1]{KohnenZagier}, for $D$ a fundamental discriminant with $D>0$ and $4|m$, we have
\[
\frac{c\left(D\right)^2}{\left\langle \theta\left(\hat{\phi}_j^{m}\right),\theta\left(\hat{\phi}_j^{m}\right)\right\rangle}=\frac{\left(\frac{m}{2}-1\right)!}{\pi^{\frac{m}{2}}}D^{\frac{m-1}{2}}\frac{L\left(\frac{1}{2},\phi_j^{m}\otimes\chi_D\right)}{\left\langle \hat{\phi}_j^{m},\hat{\phi}_j^{m}\right\rangle},
\]
which implies that
\[
|c\left(D\right)|= D^{\frac{m-1}{4}}\frac{\left(\frac{m}{2}-1\right)!}{2^{\frac{m}{2}}\pi^{\frac{m}{2}}}\left(L\left(\frac{1}{2},\phi_j^{m}\right)L\left(\frac{1}{2},\phi_j^{m}\otimes\chi_D\right)\right)^{\frac{1}{2}}.\]
By using the Ramanujan bound on the Fourier coefficients of integral weight cusp forms and the above, we have
\[
|c\left(d\right)|\ll_{\epsilon} b^{\frac{m-1}{2}+\epsilon}|c\left(D\right)| \ll_{\epsilon}d^{\frac{m-1}{4}+\epsilon}\frac{\left(\frac{m}{2}-1\right)!}{2^{\frac{m}{2}}\pi^{\frac{m}{2}}}\left(L\left(\frac{1}{2},\phi_j^{m}\right)L\left(\frac{1}{2},\phi_j^{m}\otimes\chi_D\right)\right)^{\frac{1}{2}},
\]
and so
\[
|\mu_d\left(\hat{\phi}_j^{m}\right)|\ll_{\epsilon} |d|^{\frac{1}{4}+\epsilon}\frac{\left(\frac{m}{2}-1\right)!}{2^{\frac{m}{2}}\pi^{\frac{m}{2}}}\left(L\left(\frac{1}{2},\phi_j^{m}\right)L\left(\frac{1}{2},\phi_j^{m}\otimes\chi_D\right)\right)^{\frac{1}{2}},
\]
by \eqref{rand1}.

We now use the convexity bound
\[
L\left(\frac{1}{2},\phi_j^{m}\right)\ll_{\epsilon} m^{\frac{1}{2}+\epsilon},
\]
and the subconvexity bound \cite[Theorem 1]{weyl}
\[
L\left(\frac{1}{2},\phi_j^{m}\otimes\chi_D\right)\ll_{\epsilon} m^{\frac{75+12\theta}{16}}D^{\frac{1}{2}-\frac{1}{8}\left(1-2\theta\right)+\epsilon},
\]
where $\theta=\frac{7}{64}$ is the best exponent toward Ramanujan conjecture for Maass forms, to see that
\[
|\mu_d\left(\hat{\phi}_j^{m}\right)|\ll_{\epsilon} d^{\frac{1}{4}+\epsilon}\frac{\left(\frac{m}{2}-1\right)!}{2^{\frac{m}{2}}\pi^{\frac{m}{2}}}m^{2.64}D^{\frac{1}{4}-\frac{25}{512}}.
\]
 It is well-known that
\[
\left\langle \hat{\phi}_j^{m},\hat{\phi}_j^{m}\right\rangle= \frac{\Gamma\left(m\right)}{\left(4\pi\right)^{m}}L\left(1,\text{sym}^2 \phi_j^{m}\right)
\]
up to a constant. Hence, by Stirling's approximation
\begin{equation}\label{per:1}
|\mu_d\left(\phi_j^{m}\right)|\ll_{\epsilon} d^{\frac{1}{4}+\epsilon}m^{2.9}D^{\frac{1}{4}-\frac{25}{512}}\ll d^{\frac{1}{2}- \frac{25}{512}+\epsilon}m^{2.9}.
\end{equation}

\subsubsection{Maass forms}
In this section, we give an upper bound on the period integrals of Maass forms. We first recall some results of Katok and Sarnak~\cite{Katok} that generalize the work of Shintani~\cite{Shintani} to Maass forms and related the period integrals to the Fourier coefficients of half integral Maass forms. Then we use an explicit version of  the Waldspurger  formula~\cite{BaruchMao} and  give a non-trivial bound on these period integrals by using the subconvexity bound on the central value of the $L$-functions and the best bound toward Ramanujan conjecture for Maass forms.

Let $\phi_j^0$ be a Hecke---Maass form with $\left\langle\phi_j^0,\phi_j^0  \right\rangle=1$ and with the Laplacian eigenvalue $-\left(\frac{1}{4}+t_j^2\right)$. For $d>0$, let
\[
\rho\left(d\right):=\frac{1}{\sqrt{8}\pi^{\frac{1}{4}}d^{\frac{3}{4}}}\sum_{disc\left(q\right)=d}\int_{C\left(q\right)}\phi_j^0 ds
\]
be the associated period integral, and let
\[
\theta\left(\left(u+iv\right),\phi_j^0\right):=\sum_{d\neq 0}\rho\left(d\right)W_{\frac{\text{sgn}\left(d\right)}{4},\frac{it_j}{2}}\left(4\pi|d|v\right)e\left(du\right),
\]
where $W_{\frac{\text{sgn}\left(d\right)}{4},\frac{it_j}{2}}$ is the usual Whittaker function. Here
 $\rho\left(d\right)$ for $d<0$ is the sum of $\phi_j^0$ over the CM points with the discriminant $d$ appropriately normalized (see ~\cite[p.197]{Katok} or \cite[section 3.3]{leastprime} for a detailed discussion.)

Note from \cite{Katok} that $\theta\left(\left(u+iv\right),\phi_j^0\right)$ is a weight-$\frac{1}{2}$ Hecke---Maass form with the Laplacian eigenvalue $-\left(\frac{1}{4}+\frac{t_j^2}{4}\right)$.
By \cite[(5.6), p.224]{Katok} or \cite[(6.4), p.38]{LRS}, we have the following version of the Rallis inner product formula
\[
\left\langle\theta\left(\phi_j^0\right),\theta\left(\phi_j^0\right)  \right\rangle=\frac{3}{2}\Lambda\left(\frac{1}{2},\phi_j^0\right),
\]
where
\[
\Lambda\left(s,\phi_j^0\right)=\pi^{-s}\Gamma\left( \frac{s+it_j}{2} \right) \Gamma\left(\frac{s-it_j}{2}  \right)L\left(s,\phi_j^0\right)
\]
is the completed $L$-function.

By an explicit form of  Waldspurger  formula~\cite[Theorem 1.4]{BaruchMao}, and the best exponent toward the Ramanujan conjecture~\cite[Corollary 6.1]{RadziwillLester}, we have
\[
\frac{\rho\left(d\right)}{\left\langle\theta\left(\phi_j^0\right),\theta\left(\phi_j^0\right)  \right\rangle^{\frac{1}{2}}}\ll_{\epsilon}\frac{1}{\sqrt{|d|}}\left(\frac{L\left(\frac{1}{2},\phi_j^0\otimes\chi_D\right)}{L\left(1,\text{sym}^2 \phi_j^0\right)} \right)^{\frac{1}{2}} b^{\frac{7}{64}+\epsilon}|t_j|^{-\frac{\text{sgn}\left(d\right)}{4}}e^{\frac{\pi|t_j|}{4}},
\]
where $d=Db^2$ with $D$ a fundamental discriminant. Note from Stirling's formula that
\[
\Gamma\left( \frac{\frac{1}{2}+it_j}{2} \right) \Gamma\left(\frac{\frac{1}{2}-it_j}{2}  \right)\ll |t_j|^{-\frac{1}{2}}e^{-\frac{\pi|t_j|}{2}},
\]
from which we infer that
\begin{align*}
\mu_d\left(\phi_j^0\right)
&\ll d^{\frac{3}{4}}|\rho\left(d\right)|
\\&\ll_{\epsilon} d^{\frac{1}{4}} \left(\Lambda\left(\frac{1}{2},\phi_j^0\right)\right)^{\frac{1}{2}}\left(\frac{L\left(\frac{1}{2},\phi_j^0\otimes\chi_D\right)}{L\left(1,\text{sym}^2 \phi_j^0\right)} \right)^{\frac{1}{2}} b^{\frac{7}{64}+\epsilon}|t_j|^{-\frac{\text{sgn}\left(d\right)}{4}}e^{\frac{\pi|t_j|}{4}}
\\&\ll_{\epsilon}  d^{\frac{1}{4}} \left(L\left(\frac{1}{2},\phi_j^0\right)L\left(\frac{1}{2},\phi_j^0\otimes\chi_D\right)\right)^{\frac{1}{2}}b^{\frac{7}{64}+\epsilon}|t_j|^{-\left(\frac{\text{sgn}\left(d\right)+1}{4}\right)+\epsilon}.
\end{align*}
We now use the convexity bound,
\[
L\left(\frac{1}{2},\phi_j^0\right)\ll_{\epsilon} |t_j|^{\frac{1}{2}+\epsilon},
\]
and the subconvexity bound~\cite[Theorem 1]{weyl},
\[
L\left(\frac{1}{2},\phi_j^0\otimes\chi_D\right)\ll_{\epsilon} |t_j|^{\frac{31+4\theta+\epsilon}{16}}D^{\frac{1}{2}-\frac{1}{8}\left(1-2\theta\right)+\epsilon},
\]
to conclude that
\begin{equation}\label{per:2}
\mu_d\left(\phi_j^0\right)\ll_{\epsilon}  d^{\frac{1}{4}+\epsilon} |t_j|^{\frac{3}{4}} b^{\frac{7}{64}+\epsilon} D^{\frac{1}{4}-\frac{25}{512}} \ll d^{\frac{1}{2}-\frac{25}{512}+\epsilon} |t_j|^{\frac{3}{4}}.
\end{equation}
\subsubsection{Eisenstein series}
For a non-square integer $d \equiv 0,1 \pmod{4}$, let $d=Db^2$ where $D$ is a fundamental discriminant. Then we have the following explicit formula for the period integral of the Eisenstein series \cite[p.282]{zagierid}\footnote[2]{When $b=1$, this is a classical result due to Hecke \cite[p.88]{Siegeladv}.}:
\begin{equation}\label{classnum}
 \mu_d\left(E_{0}\left(\cdot,s\right)\right)= \frac{\Gamma(\frac{s}{2})^2d^{\frac{s}{2}}L(s,d) }{\Gamma(s)\zeta(2s) },
\end{equation}
where
\begin{equation}\label{l1d}
L(s,d)= L(s,\chi_D)\left(\sum_{a|b} \mu(a) \left(\frac{D}{a} \right) a^{-s} \sigma_{1-2s}\left(\frac{b}{a}\right)  \right).
\end{equation}
Here $L(s,\chi_D)$ is the Dirichlet $L$-function attached to the quadratic Dirichlet character $\chi_D (\cdot) = \left(\frac{D}{\cdot} \right)$, $\mu(\cdot)$ is the M\"obius function, and  $\sigma_{v}(\cdot)=\sum_{a|\cdot}a^{v}$ is the divisor function.

Now assume that $s=\frac{1}{2}+it$ for some $t\in \mathbb{R}$. By Stirling's formula, we have
\[
\frac{\Gamma(\frac{s}{2})^2}{\Gamma(s)}\ll |t|^{-\frac{1}{2}}.
\]
By the zero free region of $\zeta(2s)$ around $2s=1+2it$, we have
\[
|\zeta(2s)|\gg_\epsilon t^{-\epsilon}.
\]
We also have the convexity bound
\[
\zeta\left(s\right) \ll |t|^{\frac{1}{4}},
\]
and we know from \cite{hybrid} that
\[
L\left(\frac{1}{2}+it,\chi_D\right) \ll_\epsilon \left(\left(|t|+1\right)D\right)^{\frac{3}{16}+\epsilon}.
\]
Finally, observe that we have
\[
\sum_{a|b} \mu(a) \left(\frac{D}{a} \right) a^{-s} \sigma_{1-2s}\left(\frac{b}{a}\right)  \ll_\epsilon d^{\epsilon}.
\]
Combining all these estimates, we deduce the following estimate from \eqref{classnum} for $s=\frac{1}{2}+it$:
\begin{equation}\label{per:3}
\mu_d\left(E_{0}\left(\cdot,s\right)\right) \ll_\epsilon d^{\frac{1}{2}-\frac{1}{16}+\epsilon}.
\end{equation}

\subsection{Proof of Theorem \ref{effect}}

For any compactly supported smooth function $F \in C_0^\infty \left(S\mathbb{X}\right)$, recall from Proposition \ref{exp} that we have
\begin{multline*}
F\left(g\right) = \frac{3}{\pi^2}\int_{S\mathbb{X}} F\left(g_1\right) dg_1  +\sum_{\substack{m\geq 0\\ 2|m}} \sum_{j=1}^{d_m} \sum_{\substack{l \in 2\mathbb{Z}\\ |l|\geq m}}\left\langle F,\phi_{j,l}^m\right\rangle_{S\mathbb{X}}\phi_{j,l}^m\left(g\right)\\
+\sum_{m \in 2\mathbb{Z}}\frac{1}{4\pi}\int_{-\infty}^\infty \left\langle F,E_m\left(\cdot, \frac{1}{2}+it\right)\right\rangle_{S\mathbb{X}} E_m \left(g, \frac{1}{2}+it\right) dt,
\end{multline*}
and so from the discussion of Section \ref{lrs}, we have
\begin{multline*}
\mu_d\left(F\right) = \mu_d\left(\frac{3}{\pi^2}\right)\int_{S\mathbb{X}} F\left(g\right) dg  +\sum_{\substack{m\geq 0\\ 4|m}} \sum_{j=1}^{d_m}\mu_d\left(\phi_{j}^m\right) \sum_{\substack{l \in 4\mathbb{Z}\\ |l|\geq m}}\left\langle F,\phi_{j,l}^m\right\rangle_{S\mathbb{X}}\eta_j^m\left(\phi_{j,l}^m\right)\\
+\sum_{m \in 4\mathbb{Z}}\frac{1}{4\pi}\int_{-\infty}^\infty \left\langle F,E_m\left(\cdot, \frac{1}{2}+it\right)\right\rangle_{S\mathbb{X}} \eta\left(m, \frac{1}{2}+it\right)\mu_d\left(E_0 \left(\cdot, \frac{1}{2}+it\right)\right) dt.
\end{multline*}
Firstly, we have from \eqref{period:1}, \eqref{period:2}, and \eqref{period:3} that $\eta_j^m\left(\phi_{j,l}^m\right)$ and $ \eta\left(m, \frac{1}{2}+it\right)$ are both $O\left(1\right)$. Note by successive integration by parts and Cauchy---Schwarz inequality, we have for all $N\geq 1$,
\[
 \langle F, \phi_{j,l}^m \rangle \ll_N \left(|l|^2 +1\right)^{-N} \|F\|_{W^{2N,2}\left(S\mathbb{X}\right)},
\]
when $m>0$, and
\[
\langle F, \phi_{j,l}^0 \rangle \ll_N \left(|l|^2 +|t_j|^2+1\right)^{-N} \|F\|_{W^{2N,2}\left(S\mathbb{X}\right)}.
\]
Likewise, assuming that the support of $F$ is contained in $y<T$, we have
\[
\left\langle F,E_m\left(\cdot, \frac{1}{2}+it\right)\right\rangle_{S\mathbb{X}}\ll_N \left(|m|^2 +t^2+1\right)^{-N} \|F\|_{W^{2N,2}\left(S\mathbb{X}\right)} \log T,
\]
where we used \cite[(6.1.6)]{kubota} and \cite[(1.6),(1.7)]{dima}.

Now for $m>0$, we take $N=3$ and apply \eqref{per:1} to see that
\[
\sum_{\substack{m> 0\\ 4|m}} \sum_{j=1}^{d_m}\mu_d\left(\phi_{j}^m\right) \sum_{\substack{l \in 4\mathbb{Z}\\ |l|\geq m}}\left\langle F,\phi_{j,l}^m\right\rangle_{S\mathbb{X}}\eta_j^m\left(\phi_{j,l}^m\right) \ll_\epsilon d^{\frac{1}{2}-\frac{25}{512}+\epsilon}  \|F\|_{W^{6,2}\left(S\mathbb{X}\right)},
\]
and for $m=0$, we take $N=2$ and apply \eqref{per:2} to deduce
\[
\sum_{j=1}^{\infty}\mu_d\left(\phi_{j}^0\right) \sum_{l \in 4\mathbb{Z}}\left\langle F,\phi_{j,l}^0\right\rangle_{S\mathbb{X}}\eta_j^0\left(\phi_{j,l}^0\right) \ll_\epsilon d^{\frac{1}{2}-\frac{25}{512}+\epsilon}  \|F\|_{W^{4,2}\left(S\mathbb{X}\right)}.
\]
For the Eisenstein series contribution, we take $N=2$ and apply \eqref{per:3} to see
\[
\sum_{m \in 4\mathbb{Z}}\frac{1}{4\pi}\int_{-\infty}^\infty \left\langle F,E_m\left(\cdot, \frac{1}{2}+it\right)\right\rangle_{S\mathbb{X}} \eta\left(m, \frac{1}{2}+it\right)\mu_d\left(E_0 \left(\cdot, \frac{1}{2}+it\right)\right) dt \ll_\epsilon \log T d^{\frac{7}{16}+\epsilon}\|F\|_{W^{4,2}\left(S\mathbb{X}\right)}.
\]
Therefore Theorem \ref{effect} will follow once we establish the following lower bound for the total length of $\mathscr{C}_d$:
\begin{equation}\label{sie}
l\left(\mathscr{C}_d\right)=2h(d)\log \epsilon_d \gg_\epsilon d^{\frac{1}{2}-\epsilon}.
\end{equation}
To see this, let $d=Db^2$ where $D$ is a fundamental discriminant. Then by Dirichlet class number formula~\cite[p.50]{Davenport} for binary quadratic forms  discriminant $d$  (or  by letting $s\to 1$ in \eqref{classnum}), we have
\[
h(d)\log(\epsilon_d)=d^{\frac{1}{2}}L(1,d)
\]
with the same $L(\cdot,d)$ given in \eqref{l1d}, i.e.,
\[
L(1,d)= L(1,\chi_D)\left(\sum_{a|b} \mu(a) \left(\frac{D}{a} \right) a^{-1} \sigma_{-1}\left(\frac{b}{a}\right)  \right).
\]
Note that
\begin{align*}
\sum_{a|b} \mu(a) \left(\frac{D}{a} \right) a^{-1} \sigma_{-1}\left(\frac{b}{a}\right) &=\sum_{ca|b} \mu(a) \left(\frac{D}{a} \right) \frac{c}{b}
\\
&=\frac{1}{b}\sum_{e|b} e \prod_{p|e} \left( 1-\left(\frac{D}{p}\right) p^{-1} \right),
\end{align*}
where $e=ac$, and that
\[
\frac{1}{b}\sum_{e|b} e \prod_{p|e} \left( 1-\left(\frac{D}{p}\right) p^{-1} \right) \gg b^{-\epsilon}.
\]
Now \eqref{sie} follows by using Siegel's lower bound \cite{siegel}
\[
L(1,\chi_D) \gg_{\epsilon} D^{-\epsilon},
\]
and this completes the proof of Theorem \ref{effect}.

\subsection{Proof of Theorem \ref{main1}}
We are now ready to prove Theorem \ref{main1}. Assume that $\beta:[0,l\left(\beta\right)]\to \mathbb{X}$ is a sufficiently short compact geodesic segment in the region determined by $y<T$ such that $\beta\left([-l\left(\beta\right),2l\left(\beta\right)]\right)$ has no self intersection. (We fix $T$ for simplicity, but it is possible to vary $T$ with $d$.) For $\delta=d^{-a}$ with $a>0$ to be chosen later, such that $l\left(\beta\right) \gg \delta$, let
\[
\beta_1:= \{\beta\left(t\right)~:~ t\in [0,l\left(\beta\right)-\delta]\}
\]
and
\[
\beta_2:= \{\beta\left(t\right)~:~ t\in [-\delta,l\left(\beta\right)]\}.
\]
Then from Lemma \ref{counting}, we have
\[
\frac{1}{\delta^2}\int_{\widetilde{\alpha}_2}\int_{\widetilde{\beta_1}} K_{\delta}^{\theta_1,\theta_2} \left(s_1,s_2\right) ds_1 ds_2 \leq I^{\theta_1,\theta_2} \left(\beta,\alpha_2\right) \leq \frac{1}{\delta^2}\int_{\widetilde{\alpha}_2}\int_{\widetilde{\beta_1}} K_{\delta}^{\theta_1,\theta_2} \left(s_1,s_2\right) ds_1 ds_2
\]
for any closed geodesic $\alpha_2$. Now define $f_1,f_2 \in C_0^\infty \left(S\mathbb{X}\right)$ using Lemma \ref{lem1} by
\[
f_1\left(g\right) = \frac{1}{\delta^2}\int_{\widetilde{\beta_1}} m_{\delta}^{\theta_1,\theta_2} \left(s_1^{-1}g\right) ds_1
\]
and
\[
f_2\left(g\right) =\frac{1}{\delta^2}\int_{\widetilde{\beta_2}} M_{\delta}^{\theta_1,\theta_2} \left(s_1^{-1}g\right) ds_1,
\]
with $\varepsilon = d^{-2a}$, where we assume that $\theta_2-\theta_1 \gg d^{-a}$. Note that $m\left(g_1^{-1}g_2\right)$ and $M\left(g_1^{-1}g_2\right)$ are minorant and majorant of $K_{\delta}^{\theta_1,\theta_2}\left(g_1,g_2\right)$ for $g_1\in \beta_i$, $g_2 \in S\mathbb{X}$ for all sufficiently large $d$. Hence, for  all sufficiently large $d$ (independent of $\alpha_2$), we have
\begin{equation*}
\int_{\widetilde{\alpha}_2} f_1\left(s\right) ds \leq  I^{\theta_1,\theta_2} \left(\beta,\alpha_2\right) \leq \int_{\widetilde{\alpha}_2} f_2\left(s\right) ds,
\end{equation*}
and so
\begin{equation}\label{finales}
\int_{\mathscr{C}_d} f_1\left(s\right) ds \leq  2I^{\theta_1,\theta_2} \left(\beta,C_d\right) \leq \int_{\mathscr{C}_d} f_2\left(s\right) ds,
\end{equation}
where the factor $2$ amounts to the fact that $\mathscr{C}_d$ is a double cover of $C_d$.

We now apply Theorem \ref{effect} to see that
\[
\frac{1}{l\left(\mathscr{C}_d\right)}\int_{\mathscr{C}_d} f_i\left(s\right) ds = \frac{3}{\pi^2} \int_{S\mathbb{X}} f_i\left(g\right) d\mu_g + O_\epsilon\left(d^{-\frac{25}{512}+\epsilon} \|f_i\|_{W^{6,\infty}}\right).
\]
Because of the choice of $f_1$ and $f_2$, we have
\[
 \|f_i\|_{W^{6,\infty}} \ll \varepsilon^{-6} l\left(\beta\right) \ll d^{12a} l\left(\beta\right),
\]
and
\[
\int_{S\mathbb{X}} f_i\left(g\right) d\mu_g = \left(\cos\theta_1-\cos\theta_2\right)  \left(l\left(\beta\right)+O\left(\delta\right)\right)\left(1+O\left(\varepsilon\right)\right) = \left(\cos\theta_1-\cos\theta_2\right) l\left(\beta\right) \left(1+O\left(d^{-2a}\right)\right)
\]
by Lemma \ref{lem1}. Now we complete the proof of Theorem \ref{main1} for sufficiently short geodesic segments by choosing $a = \frac{25}{7168}$ and applying these estimates to \eqref{finales}. This then implies Theorem \ref{main1} for any geodesic segment of length $<1$ by dividing the segment into finitely many sufficiently short geodesic segments, and then applying Theorem \ref{main1} to each of them.

\section{Selberg's pre-Trace Formula for \texorpdfstring{$PSL_2\left(\mathbb{R}\right)$}{PSL2(R)}}\label{pretrace}
Let  $k \in C_0^\infty\left(PSL_2\left(\mathbb{R}\right)\right)$, and let $K$ be the integral kernel on $S\mathbb{X}$ defined by
\[
K\left(g_1,g_2\right) = \sum_{\gamma \in \Gamma} k\left(g_1,\gamma g_2\right),
\]
where $k\left(g_1,g_2\right)=k\left(g_1^{-1} g_2\right)$. The corresponding integral operator $T_K$ acts on $f\in L^2\left(S\mathbb{X}\right)$  by
\[
T_K\left(f\right):=\int_{S\mathbb{X}} K\left(g_1,g_2\right) f\left(g_2\right) dg_2=\int_{PSL_2\left(\RR\right)} k\left(g_1^{-1}g_2\right) f\left(g_2\right) dg_2.
\]
It follows that $T_K(f)\in L^2\left(S\mathbb{X}\right)$.  In this section, we study the spectral expansion of $K$ in terms of the equivariant eigenfunctions of the Casimir operator, which are explicitly described in \S\ref{S1}. In other words, we derive Selberg's pre-trace formula for $PSL_2\left(\mathbb{Z}\right) \backslash PSL_2\left(\mathbb{R}\right)$.

\subsection{Cuspidal spectrum}
In this section, we describe explicitly the spectrum of $T_K$ acting on the cuspidal subspace $L_{\text{cusp}}^2 \left(S\mathbb{X}\right)$.
Let $R_g\left(f\right)\left(x\right)=f\left(xg\right)$ be the right regular action of $PSL_2\left(\mathbb{R}\right)$ on
\[
L_{\text{cusp}}^2 \left(\Gamma\backslash PSL_2\left(\mathbb{R}\right)\right)=L_{\text{cusp}}^2 \left(S\mathbb{X}\right).
\]
\begin{lemma}
Let $\pi$ be an irreducible unitary representation of $PSL_2\left(\mathbb{R}\right)$. Then for any $f\in W_{\pi} \subset L_{\text{cusp}}^2 \left(S\mathbb{X}\right)$, we have
\[
T_K (f) \in  W_{\pi}.
\]
\end{lemma}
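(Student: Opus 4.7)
The plan is to exhibit $T_K$ as an averaging of the right regular representation against $k$, so that it automatically preserves every closed $G$-invariant subspace of $L^2_{\text{cusp}}(S\mathbb{X})$, in particular $W_\pi$.

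The first step is to rewrite the integral operator in convolution form. Using the substitution $g_2 \mapsto g_1 g_2$ together with the left-invariance of Haar measure on $PSL_2(\mathbb{R})$, I would check that
\[
T_K(f)(g_1) = \int_{PSL_2(\mathbb{R})} k(g_2) f(g_1 g_2)\, dg_2 = \int_{PSL_2(\mathbb{R})} k(g_2)\, R_{g_2}(f)(g_1)\, dg_2.
\]
Since $k$ is compactly supported and smooth and $f \in L^2$, this integral converges absolutely for each $g_1$ and, by a standard Minkowski-type estimate, defines an element of $L^2(S\mathbb{X})$ equal to the vector-valued (Bochner) integral
\[
T_K(f) = \int_{PSL_2(\mathbb{R})} k(g_2)\, R_{g_2}(f)\, dg_2
\]
in $L^2(S\mathbb{X})$. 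The map $g_2 \mapsto R_{g_2}(f)$ is continuous from the support of $k$ into $L^2(S\mathbb{X})$ (strong continuity of the right regular representation), so Bochner integrability is justified.

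The second step is to invoke the definition of $W_\pi$. By construction $W_\pi$ is a closed $PSL_2(\mathbb{R})$-invariant subspace of $L^2_{\text{cusp}}(S\mathbb{X})$ carrying the irreducible representation $\pi$, so $R_{g_2}(f) \in W_\pi$ for every $g_2 \in PSL_2(\mathbb{R})$ whenever $f \in W_\pi$. A closed subspace of a Banach space contains all of its Bochner integrals (equivalently, it is weakly closed and one tests against continuous linear functionals), so
\[
T_K(f) = \int_{PSL_2(\mathbb{R})} k(g_2)\, R_{g_2}(f)\, dg_2 \;\in\; W_\pi,
\]
which is exactly the claim.

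There is essentially no obstacle here; the only point that requires a small verification is the rewriting in convolution form, which rests on the left-invariance of Haar measure and the identity $k(g_1, g_2) = k(g_1^{-1} g_2)$. Everything else is formal: $T_K$ commutes with right translations, and $W_\pi$ is a closed right-translation-invariant subspace, so $T_K$ preserves $W_\pi$.
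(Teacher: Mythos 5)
Your proposal is correct and follows essentially the same route as the paper: rewrite $T_K$ as the convolution $\int k(u)\,R_u(f)\,du$ via the substitution $u=g_1^{-1}g_2$ and left-invariance of Haar measure, then conclude from the right-translation invariance and closedness of $W_\pi$. The only difference is that you spell out the Bochner-integral justification that the paper leaves implicit.
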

\begin{proof}
Observe that
\begin{equation}\label{mot}
T_K(f)\left(g_1\right)=\int_{PSL_2\left(\mathbb{R}\right)} k\left(g_1,g_2\right) f\left(g_2\right) dg_2=\int_{PSL_2\left(\mathbb{R}\right)} k\left(g_1^{-1} g_2\right) f\left(g_2\right) dg_2=\int_{PSL_2\left(\mathbb{R}\right)} k\left(u\right) f\left(g_1u\right) du,
\end{equation}
where $u=g_1^{-1} g_2$. Hence, we have
\[
T_K(f)=\int_{PSL_2\left(\mathbb{R}\right)} k\left(u\right) R_{u}\left(f\right) du,
\]
and because $R_{u}\left(f\right) \in W_{\pi}$ for every $u$, we conclude that $T_K(f)\in W_{\pi}$.
\end{proof}
From \eqref{mot}, for an abstract irreducible unitary representation $\pi$ of $PSL_2\left(\RR\right)$ and $f\in W_{\pi}$, we define the action of $k$ on $f$ by
\[
k*f=\int_{PSL_2\left(\mathbb{R}\right)} k\left(u\right) \pi\left(u\right)\left(f\right) du,
\]
which agrees with $T_K(f)$ when $W_\pi$ is a subspace of $L_{\text{cusp}}^2 \left(S\mathbb{X}\right)$.

Let $\psi: W_{\pi}\to W_{\pi'} $ be an isomorphism of representations $\pi$ and $\pi'$. Note that for $f \in W_\pi$ and $f'\in W_{\pi'}$ with $\psi\left(f\right)=f'$, we have $\psi\left(k*f\right)=k*f'$. We denote by  $\phi_m\in W_{\pi}$  the unique (up to a unit scalar) vector of norm $1$ and weight $m$. We fix the unit scalar except for the spherical or the lowest weight vector, by using the normalized lowering and raising operator that we introduced in~\eqref{phi1} and \eqref{phi2}.

Now let
\begin{equation}\label{defh}
h\left(k,m,n,\pi\right):= \left\langle k*\phi_m,\phi_n \right\rangle,
\end{equation}
and let  $M_{\pi}\left(m,n\right)\left(g\right)=\left\langle\pi\left(g\right)\phi_{m},\phi_{n }\right\rangle$ be the matrix coefficient of $\pi$.  We note that $h\left(k,m,n,\pi\right)$ and  $M_{\pi}\left(m,n\right)\left(g\right)$ do not depend on the choice of the unit scalar of the spherical or the lowest weight vector.

We recall some properties of  $M_{\pi}\left(m,n\right)\left(g\right)$ in the following lemma.
\begin{lemma}\label{triv}
We have for every $g\in PSL_2\left(\mathbb{R}\right)$,
\[
|M_{\pi}\left(m,n\right)\left(g\right)|\leq 1,
\]
and
\[
 M_{\pi}\left(m,n\right)\left(R_{\theta'}  g R_{\theta}\right)=e^{-im\theta}e^{-in\theta'}M_{\pi}\left(m,n\right)\left(g\right).
 \]
 \end{lemma}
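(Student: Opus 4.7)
The plan is to deduce both statements directly from the fact that $\pi$ is a unitary representation and that $\phi_m$ (respectively $\phi_n$) is a unit weight-$m$ (respectively weight-$n$) vector. The paper's weight convention $f(gR_\theta) = e^{-im\theta} f(g)$, transported to the abstract representation via the right regular action, amounts to the identity $\pi(R_\theta)\phi_m = e^{-im\theta}\phi_m$, and I will use this as the only nontrivial input.

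For the bound $|M_\pi(m,n)(g)| \leq 1$, I would simply apply the Cauchy--Schwarz inequality to $\langle \pi(g)\phi_m, \phi_n\rangle$, noting that $\pi$ unitary gives $\|\pi(g)\phi_m\| = \|\phi_m\| = 1$ and that $\|\phi_n\|=1$ by the normalization fixed just before the lemma statement. This is a one-line argument with no obstacle.

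For the covariance identity, I would expand
\[
M_\pi(m,n)(R_{\theta'} g R_\theta) = \langle \pi(R_{\theta'})\pi(g)\pi(R_\theta)\phi_m,\, \phi_n\rangle,
\]
use $\pi(R_\theta)\phi_m = e^{-im\theta}\phi_m$ to pull the factor $e^{-im\theta}$ out of the inner product, and then move $\pi(R_{\theta'})$ to the second slot via unitarity, producing $\pi(R_{\theta'})^{-1}\phi_n = \pi(R_{-\theta'})\phi_n = e^{in\theta'}\phi_n$. Taking the complex conjugate of the scalar in the second slot yields the claimed factor $e^{-in\theta'}$, so that together we recover $e^{-im\theta}e^{-in\theta'}M_\pi(m,n)(g)$.

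There is no real obstacle here; the only thing I would be careful about is the sign conventions, namely checking that the weight action agrees on both sides (the weight appears with a minus sign in the paper's convention, and the conjugation on the second slot flips $e^{in\theta'}$ to $e^{-in\theta'}$). Once those are aligned the lemma is immediate, which matches the label \texttt{triv}.
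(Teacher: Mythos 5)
Your proof is correct and follows essentially the same route as the paper: the covariance identity is the identical computation (move $\pi(R_{\theta'})$ to the second slot by unitarity and use the weight property with the conjugation flipping the sign), and for the bound the paper uses Parseval, $1=\|\pi(g)\phi_m\|^2=\sum_n |\langle\pi(g)\phi_m,\phi_n\rangle|^2$, rather than Cauchy--Schwarz, which is an immaterial variation.
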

\begin{proof}
We have
\[
1=|\pi\left(g\right)\phi_{m}|^2=\sum_{n}\left\langle\pi\left(g\right)\phi_{m},\phi_{n}\right\rangle^2,
\]
from which it is immediate that $ |M_{\pi}\left(m,n\right)\left(g\right)|\leq 1$. For the second identity, we have
\[
 M_{\pi}\left(m,n\right)\left(R_{\theta'}  g R_{\theta}\right)= \left\langle\pi \left(g\right) \pi\left(R_{\theta}\right)\phi_{m},\pi\left(R_{-\theta'}\right)\phi_{n }\right\rangle=e^{-im\theta}e^{-in\theta'}M_{\pi}\left(m,n\right)\left(g\right).\qedhere
\]
\end{proof}
Define $k_{m,n} \in C_0^\infty\left(PSL_2\left(\mathbb{R}\right)\right)$ by
\begin{equation}\label{kmndef}
k_{m,n} \left(g\right) := \frac{1}{4\pi^2}\int_{0}^{2\pi}\int_{0}^{2\pi} k\left(R_{\theta'} g R_{\theta}\right) e^{-in\theta'-im\theta}d\theta' d\theta.
\end{equation}
Note that
\begin{equation}\label{transformation}
k_{m,n} \left(R_{\theta_1}gR_{\theta_2}\right) =  e^{in\theta_1} k_{m,n} \left(g\right) e^{im\theta_2}.
\end{equation}
The following lemma holds for every unitary irreducible   representation  of $PSL_2\left(\RR\right)$.
\begin{lemma}\label{mxc}
We have
\[
h\left(k,m,n,\pi\right)= \int_{PSL_2\left(\mathbb{R}\right)} k_{m,n}\left(u\right) M_{\pi}\left(m,n\right)\left(u\right)du,
\]
and for all non-negative integers $N_1,N_2$, we have the following estimate
\[
h\left(k,m,n,\pi\right) \ll_{N=N_1+N_2} \left(1+|m|\right)^{-N_1} \left(1+|n|\right)^{-N_2}  \|k\|_{W^{N,1}}.
\]
\end{lemma}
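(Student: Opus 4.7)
Starting from the definition, expanding the convolution gives
\begin{equation*}
h(k,m,n,\pi)=\langle k*\phi_m,\phi_n\rangle=\int_{PSL_2(\RR)} k(u)\,M_\pi(m,n)(u)\,du.
\end{equation*}
To replace $k$ by $k_{m,n}$, insert the definition of $k_{m,n}$, swap the order of integration by Fubini, and perform the change of variables $u\mapsto R_{-\theta'}uR_{-\theta}$, legitimate by bi-invariance of the Haar measure on the unimodular group $PSL_2(\RR)$. By Lemma \ref{triv},
\[
M_\pi(m,n)(R_{-\theta'}uR_{-\theta})=e^{im\theta}e^{in\theta'}M_\pi(m,n)(u),
\]
which exactly cancels the characters $e^{-in\theta'-im\theta}$ appearing in the definition of $k_{m,n}$. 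The remaining $d\theta\,d\theta'$ integral collapses to $4\pi^2$, and we recover the original integral, proving the first identity.

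\textbf{Sobolev bound via integration by parts.} Since $|M_\pi(m,n)|\leq 1$ by Lemma \ref{triv}, the first identity gives the uniform bound $|h(k,m,n,\pi)|\leq \|k_{m,n}\|_{L^1}$. In the defining integral of $k_{m,n}$, integrate by parts $N_1$ times in $\theta$ and $N_2$ times in $\theta'$ (valid because the integrands are $2\pi$-periodic), extracting the factor $(im)^{-N_1}(in)^{-N_2}$ when $m,n\neq 0$. The derivative $\partial_\theta$ of $k(R_{\theta'}gR_\theta)$ is the left-invariant $K$-generator $X_K$ acting on $k$, which in Iwasawa coordinates is exactly the $\partial_\theta$ appearing in the Sobolev norm. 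Differentiation in $\theta'$ gives the right-invariant $K$-generator $Y_K$. Applying Fubini once more with bi-invariance of Haar measure yields
\begin{equation*}
\|k_{m,n}\|_{L^1}\leq \frac{1}{|m|^{N_1}|n|^{N_2}}\|Y_K^{N_2}\partial_\theta^{N_1}k\|_{L^1}.
\end{equation*}

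\textbf{Conversion and conclusion.} Because $Y_K(g)=\bigl(\mathrm{Ad}(g)X_K\bigr)^{L}(g)$, the right-invariant field $Y_K$ expands in the left-invariant basis $\partial_\theta,\,y\partial_x,\,y\partial_y$ with smooth coefficients (polynomial in $\cos\theta,\sin\theta,y,y^{-1}$) that are bounded on the compact support of $k$. Iterating $N_2$ times produces a finite sum of left-invariant monomials of order $\leq N_2$ with bounded coefficients, so $\|Y_K^{N_2}\partial_\theta^{N_1}k\|_{L^1}\ll_N \|k\|_{W^{N,1}}$ for $N=N_1+N_2$. Combining with the trivial estimate $|h(k,m,n,\pi)|\leq\|k\|_{L^1}$ (used when $|m|\leq 1$ or $|n|\leq 1$) turns the bound $\min(1,|m|^{-N_1}|n|^{-N_2})$ into the desired $(1+|m|)^{-N_1}(1+|n|)^{-N_2}$ decay. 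The main technical point, and the one requiring care, is this final conversion of the right-invariant field $Y_K$ into left-invariant ones: everything else is routine integration by parts and Fubini, but one must verify that the coefficients arising from the adjoint action do not spoil the Sobolev estimate.
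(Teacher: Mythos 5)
Your proof is correct and follows essentially the same route as the paper's: the identity comes from averaging over $K\times K$ using bi-invariance of Haar measure together with the transformation law of $M_\pi(m,n)$ from Lemma \ref{triv}, and the decay comes from $|M_\pi(m,n)|\leq 1$ plus integration by parts in the two rotation angles. You actually supply more detail than the paper on the one delicate point — rewriting the right-invariant rotation derivative in the left-invariant frame via the adjoint action, whose coefficients are bounded only on the (fixed, compact) support of $k$, so the implied constant carries a harmless dependence on that support which the paper's statement leaves implicit.
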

\begin{proof}
Recall from the definition that
\[
h\left(k,m,n,\pi\right)= \int_{PSL_2\left(\mathbb{R}\right)} k\left(u\right)\left\langle\pi\left(u\right) \phi_{m},\phi_n\right\rangle du= \int_{PSL_2\left(\mathbb{R}\right)} k\left(u\right) M_{\pi}\left(m,n\right)\left(u\right)du,
\]
and so
\begin{align*}
h\left(k,m,n,\pi\right)&= \int_{PSL_2\left(\mathbb{R}\right)} k\left(u\right) M_{\pi}\left(m,n\right)\left(u\right)du.
\\
&=  \frac{1}{4\pi^2}\int_{PSL_2\left(\mathbb{R}\right)} \int_{\theta}\int_{\theta'} k\left(R_{\theta'}  u R_{\theta}\right)  M_{\pi}\left(m,n\right)\left(R_{\theta'}  u R_{\theta}\right) d\theta d\theta' du
\\
&=  \frac{1}{4\pi^2}\int_{PSL_2\left(\mathbb{R}\right)} M_{\pi}\left(m,n\right)\left( u \right) \int_{\theta}\int_{\theta'} k\left(R_{\theta'}  u R_{\theta}\right) e^{-im\theta}e^{-in\theta'}    d\theta d\theta' du
\\
&= \int_{PSL_2\left(\mathbb{R}\right)} k_{m,n}\left(u\right) M_{\pi}\left(m,n\right)\left(u\right)du.
\end{align*}
Therefore, by integration by parts, we have
\begin{align*}
h\left(k,m,n,\pi\right) \leq \int_{PSL_2\left(\mathbb{R}\right)} |k_{m,n}\left(u\right)| du&= \int_{PSL_2\left(\mathbb{R}\right)} \left| \frac{1}{4\pi^2} \int_{\theta}\int_{\theta'} k\left(R_{\theta'}  u R_{\theta}\right) e^{-im\theta}e^{-in\theta'}    d\theta d\theta' \right|  du\\
&\ll_{N}  \left(1+|m|\right)^{-N_1} \left(1+|n|\right)^{-N_2}  \|k\|_{W^{N,1}},
\end{align*}
where we used  $|M_{\pi}\left(m,n\right)\left( u \right)|\leq 1$ from Lemma \ref{triv}. This completes the proof of our lemma.
\end{proof}

\subsubsection{Principal series representation of \texorpdfstring{$SL_2\left(\RR\right)$}{SL2\left(R\right)}}
For our application in the subsequent chapters, we need a refined estimate for $h\left(k,m,n,\pi\right)$ when $\pi$ is a unitary principal series representation. We first give an explicit representation of $h\left(k,m,n,\pi\right)$.
\begin{lemma}
Let $W_{\pi}$ be a unitary principal series representation of $SL_2\left(\RR\right)$ with the parameter $it$ \cite[Cahpter VII]{knapp}.   Let
\begin{equation}\label{defhr}
h\left(k,m,n,t\right):=\int_{PSL_2\left(\mathbb{R}\right)} k_{m,n}\left(g\right)y^{\frac{1}{2}+it} e^{-im\theta} dg,
\end{equation}
where  $g=na\left(y\right)R_{\theta}$. Then we have
\[
h\left(k,m,n,\pi\right)= h\left(k,m,n,t\right).
\]
\end{lemma}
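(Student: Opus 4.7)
My plan is to realize the unitary principal series $\pi$ with parameter $it$ in its induced (line bundle) model, in which the weight-$m$ unit vector $\phi_m \in W_\pi$ is exactly the function $\psi_{1/2+it,m}(g) = y^{1/2+it} e^{-im\theta}$ (written in Iwasawa $g = n a(y) R_\theta$). In this model the unitary structure is the $L^2$-inner product on $K = \{R_\varphi\}$ with the normalized measure $d\varphi/(2\pi)$; since $\phi_m|_K(R_\varphi) = e^{-im\varphi}$, the family $\{\phi_m\}_{m \in 2\mathbb{Z}}$ is an orthonormal basis. Expanding $\pi(u)\phi_m$ in this basis and using $\phi_n(e) = 1$ yields the standard matrix-coefficient formula
\[
M_\pi(m,n)(u) = \frac{1}{2\pi}\int_0^{2\pi} \phi_m(R_\varphi u)\, e^{in\varphi}\, d\varphi.
\]

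Next I substitute this into $h(k,m,n,\pi) = \int_G k(u) M_\pi(m,n)(u)\, du$, swap integrals (legitimate since $k$ is compactly supported), and perform the measure-preserving change of variable $v = R_\varphi u$ to arrive at
\[
h(k,m,n,\pi) = \frac{1}{2\pi}\int_0^{2\pi} e^{in\varphi} \int_G k(R_{-\varphi} v)\, \phi_m(v)\, dv\, d\varphi.
\]
I then exploit the right-weight identity $\phi_m(v R_\theta) = e^{-im\theta}\phi_m(v)$ together with bi-invariance of Haar measure on $G$ to insert, without cost, an average over $\theta$ inside the inner integral. This transfers an extra factor $e^{-im\theta}$ onto $k$; after reindexing $\varphi \mapsto -\varphi$, the two rotation averages assemble into precisely the double Fourier projection defining $k_{m,n}$ in \eqref{kmndef}. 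The factor $1/(2\pi)$ from the matrix-coefficient formula combined with the inserted $1/(2\pi)$ average combines with the $(2\pi)^2$ in the definition of $k_{m,n}$ to give the correct overall constant $1$, yielding
\[
h(k,m,n,\pi) = \int_G k_{m,n}(v) \phi_m(v)\, dv = \int_G k_{m,n}(v)\, y^{1/2+it} e^{-im\theta}\, dv = h(k,m,n,t),
\]
which is the identity to be proved.

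There are no serious analytic issues since $k$ is compactly supported and $\phi_m$ is locally bounded, so all integrals converge absolutely and Fubini applies throughout. The only step that genuinely requires care is the bookkeeping of $2\pi$-factors among the Haar measure on $G$ (in the paper's convention $dV = dxdyd\theta/y^2$), the Haar measure on $K$, and the normalization of the unitary inner product used to define $M_\pi(m,n)$; these must be reconciled so that $\phi_m$ appears in the final formula with coefficient exactly $1$ rather than some spurious power of $2\pi$.
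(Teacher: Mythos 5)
Your proof is correct and follows essentially the same route as the paper: both realize the unitary principal series in the induced picture restricted to $K$, identify the weight-$m$ unit vector with $y^{\frac{1}{2}+it}e^{\mp im\theta}$, and unwind two circle averages to reassemble $k_{m,n}$ (you use the left-$NA$-equivariant model with right translation and insert the second $K$-average via the weight property of $\phi_m$, whereas the paper uses the mirror right-equivariant model and gets both averages by computing $k*\phi_m$ and then the $K$-inner product; these are identified under $g\mapsto g^{-1}$). The only point to make explicit is that your choice $\phi_m=\psi_{\frac{1}{2}+it,m}$ has the phases fixed consistently with the normalization \eqref{phi1} of the raising and lowering operators — which matters since $h\left(k,m,n,\pi\right)$ for $m\neq n$ depends on relative phases — but this holds and is asserted at the same level of detail in the paper's own proof.
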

\begin{proof}
We note that principal series representations are induced from the unitary characters of the upper triangular matrices to $PSL_2\left(\mathbb{R}\right)$ \cite[Cahpter VII]{knapp}. In this model, a dense subspace of a representation is given by
\[
\{f:PSL_2\left(\mathbb{R}\right) \to \CC \text{ continuous }~ :~ f\left(xan\right)=e^{\left(it+\frac{1}{2}\right)\log\left(a\right)}f\left(x\right)  \}
\]
with the norm
\[
|f|^2=\frac{1}{2\pi}\int_{\theta} |f\left(R_{\theta}\right)|^2d\theta,
\]
and the $PSL_2\left(\mathbb{R}\right)$ action is given by
\[
\pi\left(g\right)f\left(x\right)=f\left(g^{-1}x\right).
\]
The weight-$m$ unit vectors are explicitly given by
\[
\phi_m\left(R_\theta a\left(y\right) n\right)=e^{i m\theta}y^{-\left(\frac{1}{2}+it\right)}.
\]
Note that the orthonormal basis $\{\phi_m\}$ is normalized as our convention in \eqref{phi1}, i.e.,
\begin{align*}
\mathbf{E}^- \phi_{m} &= \left(m+1-2it \right)\phi_{m-2}, \text{ and}\\
\mathbf{E}^+ \phi_{m} &= \left(m+1+2it \right)\phi_{m+2}.
\end{align*}
With these, we first see that
\begin{align*}
k*\phi_m\left(R_{\theta'}\right)&= \int_{PSL_2\left(\mathbb{R}\right)} k\left(u\right)y\left(u^{-1}R_{\theta'}\right)^{-\left(\frac{1}{2}+it\right)}e^{im\theta\left(u^{-1}R_{\theta'}\right)} du
\\
&= \int_{PSL_2\left(\mathbb{R}\right)} k\left(R_{\theta'}v^{-1}\right)y\left(v\right)^{-\left(\frac{1}{2}+it\right)}e^{im\theta\left(v\right)} dv,
\end{align*}
where $v= u^{-1}R_{\theta'}$ and $v=R_{\theta\left(v\right)}a\left(y\left(v\right)\right)n\left(v\right)$.
We therefore have
\begin{align*}
h\left(k,m,n,\pi\right)=\left\langle k*f_m,f_n \right\rangle &=\frac{1}{2\pi}  \int_{\theta'} k*f_m\left(R_{\theta'}\right)\bar{f}_n\left(R_{\theta'}\right) d\theta'
\\
&=\frac{1}{2\pi} \int_{\theta'}  e^{-in\theta'}   \int_{PSL_2\left(\mathbb{R}\right)} k\left(R_{\theta'}v^{-1}\right)y\left(v\right)^{-\left(\frac{1}{2}+it\right)}e^{im\theta\left(v\right)} dv d\theta'
\\
&=\frac{1}{2\pi} \int_{PSL_2\left(\mathbb{R}\right)}  y^{\frac{1}{2}+it}  \int_{\theta'}  e^{-in\theta'} e^{-im\theta}  k\left(R_{\theta'}w\right)  d\theta'
dw
\\
&=\int_{PSL_2\left(\mathbb{R}\right)} k_{m,n}\left(w\right)y^{\frac{1}{2}+it} e^{-im\theta} dw,
\end{align*}
where $w=v^{-1}$ and $w=na\left(y\right)R_{\theta}$. Note that $y=y\left(v\right)^{-1}$ and $\theta=-\theta\left(v\right)$.
\end{proof}
We now prove that $h\left(k,m,n,t\right)$ decays fast in all parameters uniformly.
\begin{lemma}\label{important}
Suppose that $k$ is supported inside the compact subset $C\subset SL_2\left(\RR\right)$. Then we have
\[
\int_{PSL_2\left(\mathbb{R}\right)} k_{m,n}\left(g\right)y^{\frac{1}{2}+it} e^{-im\theta} dg \ll_{N,C} \left(1+|m|\right)^{-N_1} \left(1+|n|\right)^{-N_2} \left(1+|t|\right)^{-N_3} \|k\|_{W^{N,\infty}}
\]
for any $N_1,N_2,N_3 \geq 0$, where $N=N_1+N_2+N_3$.
\end{lemma}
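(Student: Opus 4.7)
The plan is to exploit the oscillatory nature of the integrand via three separate integrations by parts, one producing each of the factors $(1+|m|)^{-N_1}$, $(1+|n|)^{-N_2}$, and $(1+|t|)^{-N_3}$. First I would use the $K$-equivariance \eqref{transformation} of $k_{m,n}$ to simplify the integral: since $k_{m,n}(n(x)a(y)R_\theta) = k_{m,n}(n(x)a(y))e^{im\theta}$, the factor $e^{-im\theta}$ exactly cancels the $\theta$-dependence, and after trivially integrating out $\theta$ with respect to the Haar measure $\frac{dx\,dy\,d\theta}{y^2}$ we obtain
\[
h(k,m,n,t) = 2\pi \int_{\mathbb{R}}\int_{0}^{\infty} k_{m,n}(n(x)a(y))\, y^{-\frac{3}{2}+it}\, dx\,dy.
\]
Because $k$ is supported in the compact set $C\subset PSL_2(\mathbb{R})$, the function $k_{m,n}(n(x)a(y))$ and all its derivatives vanish outside a fixed compact subset of $\mathbb{R}\times(0,\infty)$, so all subsequent integrations by parts will be free of boundary terms.

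Next, to extract the $(1+|m|)^{-N_1}(1+|n|)^{-N_2}$ decay, I would work inside the definition \eqref{kmndef} of $k_{m,n}$ and integrate by parts $N_1$ times in $\theta$ against $e^{-im\theta}$ and $N_2$ times in $\theta'$ against $e^{-in\theta'}$, both over the circle $[0,2\pi]$. Each step costs a factor $(im)^{-1}$ or $(in)^{-1}$ and transfers a derivative onto $k(R_{\theta'}gR_\theta)$. The rightmost derivative $\partial_\theta$ is literally $\partial_\theta k$ evaluated at $R_{\theta'}gR_\theta$, since $R_\theta$ is the terminal Iwasawa factor, while $\partial_{\theta'}$ corresponds to a left-invariant vector field on $PSL_2(\mathbb{R})$ that on $C$ can be written as a bounded smooth combination of the Iwasawa-basis vector fields $\partial_\theta,\ y\partial_x,\ y\partial_y$. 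Both are therefore dominated by $\|k\|_{W^{N_1+N_2,\infty}}$ with a constant depending only on $C$.

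For the $(1+|t|)^{-N_3}$ decay I would substitute $u=\log y$, so that $y^{-\frac{3}{2}+it}\,dy=e^{-\frac{u}{2}+itu}\,du$, and integrate by parts $N_3$ times in $u$ against $e^{itu}$, again over a compact $u$-interval fixed by $C$. Each step supplies a factor $(it)^{-1}$ and a $\partial_u$ applied to the remaining integrand; $\partial_u$ acting on $e^{-u/2}$ is harmless, while $\partial_u$ acting on $k_{m,n}(n(x)a(ye^u))$ can be commuted past the $\theta,\theta'$ averages to become the right-invariant derivative of $k$ in the direction $\mathrm{Ad}(R_\theta^{-1})H$, where $H=\mathrm{diag}(\tfrac12,-\tfrac12)$. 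Since $\theta\in[0,2\pi]$ is bounded and $\mathrm{Ad}(R_\theta^{-1})H$ is a bounded combination of the basis $\{H,E,F\}$ of $\mathfrak{sl}_2$, this derivative is in turn controlled on $C$ by a bounded combination of $\partial_\theta$, $y\partial_x$, $y\partial_y$ applied to $k$.

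The main obstacle is the bookkeeping at the last step: the derivatives $\partial_{\theta'}$ and $\partial_u$ live in left-invariant or mixed directions, whereas the Sobolev norm in the paper is defined using the coordinate frame $\partial_\theta$, $y\partial_x$, $y\partial_y$. The key point to verify is that on the compact set $C$ the change-of-frame coefficients between these two systems are bounded smooth functions, so each such directional derivative of $k$ is majorized by $\|k\|_{W^{1,\infty}}$ up to a constant depending only on $C$. Once this is in hand, combining the three integration-by-parts procedures yields
\[
|h(k,m,n,t)| \ll_{N,C}\ |m|^{-N_1}|n|^{-N_2}|t|^{-N_3}\|k\|_{W^{N_1+N_2+N_3,\infty}}
\]
in the regime $|m|,|n|,|t|\geq 1$, and the trivial bound $|h|\ll_C \|k\|_{L^\infty}$ handles the complementary regime; interpolating between the two gives the claimed estimate with $(1+|m|)$, $(1+|n|)$, and $(1+|t|)$ in place of $|m|,|n|,|t|$.
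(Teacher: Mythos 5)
Your proposal is correct and is essentially the paper's own argument: the paper likewise writes the quantity as the explicit oscillatory integral $\frac{1}{4\pi}\int_{\mathbb{H}}\int_0^{2\pi}\int_0^{2\pi} k\left(R_{\theta_1'}n\left(x\right)a\left(y\right)R_{\theta_2'}\right)y^{\frac12+it}e^{-in\theta_1'-im\theta_2'}\,d\theta_1'\,d\theta_2'\,\frac{dx\,dy}{y^2}$ and concludes by integration by parts in the two angular variables and in $\log y$. You have simply supplied the details the paper leaves implicit — compact support of $k_{m,n}\left(n\left(x\right)a\left(y\right)\right)$, absence of boundary terms, and the comparison on compacta between the translation-generated derivatives and the frame $\partial_\theta, y\partial_x, y\partial_y$ defining $\|\cdot\|_{W^{N,\infty}}$ — the only blemish being that your "left-/right-invariant" labels are swapped from the usual convention, which does not affect the argument.
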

\begin{proof}
From the definition, we have
\[
\int_{PSL_2\left(\mathbb{R}\right)} k_{m,n}\left(g\right)y^{\frac{1}{2}+it} e^{-im\theta}dg = \frac{1}{4\pi} \int_{\mathbb{H}}\int_{0}^{2\pi}\int_{0}^{2\pi} k\left(R_{\theta_1'} n\left(x\right)a\left(y\right) R_{\theta_2'}\right)y^{\frac{1}{2}+it} e^{-in\theta_1'-im\theta_2'}d\theta_1' d\theta_2' \frac{dxdy}{y^2},
\]
and so the statement follows from integration by parts.
\end{proof}

\subsection{Continuous spectrum}
For $k_{m,n}$ given by \eqref{kmndef}, let
\begin{equation}\label{kmndef2}
K_{m,n}\left(g_1,g_2\right) := \sum_{\gamma \in \Gamma} k_{m,n}\left(g_1^{-1}\gamma g_2\right).
\end{equation}
Then we infer from \eqref{transformation} that
\[
K_{m,n}\left(g_1 R_{\theta_1}, g_2R_{\theta_2}\right) = e^{-in\theta_1}K_{m,n}\left(g_1,g_2\right)e^{im\theta_2},
\]
and so it defines an integral operator that maps weight $m$-forms to weight $n$-forms. Denote by $S^m \subset L^2 \left(\Gamma \backslash PSL_2\left(\mathbb{R}\right)\right)$ the space of weight $m$ forms and by $S_{\text{cusp}}^m$ the space of weight $m$ forms in $L_{\text{cusp}}^2\left(S\mathbb{X}\right)$. We first recall the following result regarding the decomposition of $K_{m,m}$.
\begin{theorem}[\cite{hejhal}]\label{sel:thm}
The integral kernel
\[
K_{m,m}\left(g_1,g_2\right)- \frac{1}{4\pi}\int_{-\infty}^\infty h\left(k,m,m,t\right) E_m \left(g_1,\frac{1}{2}+it\right)\overline{E_m \left(g_2,\frac{1}{2}+it\right)} dt
\]
defines a compact operator $ S_{\text{cusp}}^m \to S_{\text{cusp}}^m$ that acts trivially on $\Theta$. (Here $h\left(k,m,m,t\right)$ is given by \eqref{defhr}.)
\end{theorem}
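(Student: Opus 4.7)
My plan follows the standard Selberg pre-trace formula strategy on a non-compact hyperbolic quotient, adapted to the weight $m$ subspace. First, I would exploit the transformation rule \eqref{transformation} to observe that $K_{m,m}$ gives an integral operator $T_{K_{m,m}}$ that preserves $S^m$, the weight $m$ subspace. I would then compute directly that on each Eisenstein series, $T_{K_{m,m}} E_m(\cdot, \tfrac{1}{2}+it) = h(k,m,m,t) E_m(\cdot, \tfrac{1}{2}+it)$. This is the same matrix coefficient computation already performed for cusp forms, carried out in the induced principal series model where $E_m$ plays the role of the weight-$m$ unit vector; the eigenvalue is precisely \eqref{defhr}.

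Second, invoking the spectral decomposition from Proposition \ref{exp} on $S^m$, I would expand $T_{K_{m,m}}$ formally as the sum of a cuspidal part, a residual constant part (present only when $m=0$), and the continuous part given exactly by the Eisenstein integral in the statement. Consequently, if we define
\[
\tilde K_{m,m}(g_1, g_2) := K_{m,m}(g_1, g_2) - \frac{1}{4\pi}\int_{-\infty}^{\infty} h(k,m,m,t)\, E_m(g_1, \tfrac{1}{2}+it) \overline{E_m(g_2, \tfrac{1}{2}+it)}\, dt,
\]
the induced operator annihilates $\Theta$ (the closure of Eisenstein wave packets) by construction, since the subtracted kernel is precisely the spectral projector onto $\Theta \cap S^m$ with the correct eigenvalue density.

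Third, to obtain compactness on $S_{\text{cusp}}^m$, I would show that $\tilde K_{m,m}$ restricted to $S_{\text{cusp}}^m \times S_{\text{cusp}}^m$ is a Hilbert--Schmidt kernel, which is equivalent to $\sum_{\pi_j} |h(k,m,m,\pi_j)|^2 < \infty$. The rapid decay estimate in Lemma \ref{mxc} combined with the Weyl law for cusp forms of weight $m$ easily gives convergence. The finitely many residual contributions (the constant, only for $m=0$) yield a finite rank operator, which is automatically compact.

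The main obstacle will be the rigorous passage from the formal spectral identity to the pointwise statement about $\tilde K_{m,m}$, in particular justifying convergence of the Eisenstein integral and the cancellation of the cuspidal growth of $K_{m,m}$. Concretely, the zeroth Fourier coefficient of $\sum_{\gamma \in \Gamma} k_{m,m}(g_1^{-1}\gamma g_2)$ along the cusp includes terms that grow in $y_1$ and $y_2$; these are exactly matched by the constant term of the Eisenstein integral (via the Maass--Selberg relations for the weight-$m$ Eisenstein series), so that after subtraction $\tilde K_{m,m}$ decays rapidly as $y_1 + y_2 \to \infty$ and is square integrable on $S\mathbb{X} \times S\mathbb{X}$. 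Once this cancellation is established, standard Hilbert--Schmidt theory yields compactness of the cuspidal piece, completing the proof.
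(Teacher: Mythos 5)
The paper does not prove this statement: it is quoted verbatim from the literature (Hejhal's treatment of the weight-$m$ Selberg theory), so there is no in-paper argument to compare against. Judged on its own, your outline is the standard and correct route: identify the action of $T_{K_{m,m}}$ on $E_m(\cdot,\tfrac12+it)$ as multiplication by $h(k,m,m,t)$ via the induced-representation computation (this is exactly the content of \eqref{defhr} and the lemma preceding Lemma \ref{important}), subtract the resulting continuous-spectrum kernel, verify that the growth of the $\Gamma_\infty$-sum in the cusp is cancelled by the constant term of the Eisenstein integral so that the difference is bounded, hence square-integrable on the finite-volume product, hence Hilbert--Schmidt; triviality on $\Theta$ then follows because the subtracted kernel reproduces the action of $T_{K_{m,m}}$ on Eisenstein wave packets.

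Two points of presentation deserve care. First, your second step invokes the full spectral expansion of Proposition \ref{exp} to identify the subtracted term, but that expansion (for the kernel itself) is logically downstream of the compactness you are trying to prove; the non-circular order is the one you give in your final paragraph, namely the pointwise cancellation of the cuspidal growth first, compactness second, spectral expansion last. You correctly flag this as the main obstacle, so the gap is one of ordering rather than substance. Second, the criterion ``Hilbert--Schmidt iff $\sum_j |h(k,m,m,\pi_j)|^2<\infty$'' presupposes the diagonalization you have not yet established; either prove Hilbert--Schmidt directly from $\tilde K_{m,m}\in L^2(S\mathbb{X}\times S\mathbb{X})$, or argue more softly that, since the cuspidal spectrum of weight $m$ is discrete (a prior fact) and $T_{K_{m,m}}$ acts diagonally there with eigenvalues $h(k,m,m,\pi_j)\to 0$ by Lemma \ref{mxc}, compactness on $S^m_{\mathrm{cusp}}$ follows from the eigenvalue decay alone. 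With either repair the sketch is a complete and faithful rendering of the cited result.
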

We define $\mathbf{E}^{a}$ to be $\left(\mathbf{E}^+\right)^{ a}$ if $a>0$, and $\left(\mathbf{E}^-\right)^{|a|}$ if $a<0$. We have
\[
\overline{\mathbf{E}^{a}}=\left(-\mathbf{E}\right)^{-a},
\]
which follows directly from \eqref{dma}. Let $c_{m,n}$ be given by
\[
\mathbf{E}^{n-m}E_m\left(g,s\right) = c_{m,n}\left(s\right) E_n\left(g,s\right).
\]
Observe that
\[
\mathbf{E}^{n-m} y^s e^{-im\theta} = c_{m,n}\left(s\right) y^s e^{-in\theta},
\]
and that
\begin{equation}\label{symmetry}
\overline{c_{m,n}\left(\frac{1}{2}+it\right)} = c_{n,m}\left(\frac{1}{2}+it\right)
\end{equation}
for $t \in \mathbb{R}$.
\begin{theorem}\label{sel:diff}
For $m,n\in 2\mathbb{Z}$,
\[
K_{m,n}\left(g_1,g_2\right)- \frac{1}{4\pi}\int_{-\infty}^\infty h\left(k,m,n,t\right) E_n \left(g_1,\frac{1}{2}+it\right)\overline{E_m \left(g_2,\frac{1}{2}+it\right)} dt
\]
defines a compact operator $S_{\text{cusp}}^m \to S_{\text{cusp}}^n$ that acts trivially on $\Theta$.
\end{theorem}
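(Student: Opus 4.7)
The plan is to adapt the proof of Theorem \ref{sel:thm} (the diagonal case $m = n$) to the off-diagonal setting by directly computing the spectral expansion of $K_{m,n}(\cdot, g_2)$, viewed as a function of $g_1 \in S\mathbb{X}$, via Proposition \ref{exp}. Because of the transformation law $K_{m,n}(g_1 R_{\theta_1}, g_2) = e^{-in\theta_1} K_{m,n}(g_1, g_2)$, this expansion can only involve weight-$n$ spectral components, i.e.\ the cuspidal basis vectors $\phi_{j,n}^k$ (those with $|n| \geq k$) and the weight-$n$ Eisenstein series $E_n(\cdot, 1/2+it)$. My task is then to show that the two resulting spectral contributions match the two terms in the theorem statement.

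For the cuspidal coefficients, I would unfold
\[
\langle K_{m,n}(\cdot, g_2), \phi_{j,n}^k\rangle_{g_1} = \int_{PSL_2(\mathbb{R})} k_{m,n}(u)\,\overline{\phi_{j,n}^k(g_2 u^{-1})}\, du,
\]
and expand $\phi_{j,n}^k(g_2 u^{-1}) = \sum_l M_{\pi_j^k}(n, l)(u^{-1})\,\phi_{j,l}^k(g_2)$ in the orthogonal weight basis of $W_{\pi_j^k}$. The bi-equivariance $k_{m,n}(R_{\theta'} u R_\theta) = e^{in\theta'} k_{m,n}(u)\, e^{im\theta}$, combined with the transformation formula of Lemma \ref{triv}, forces $\int k_{m,n}(u)\, M_{\pi_j^k}(l, n)(u)\, du$ to vanish whenever $l \neq m$; this is the Haar-averaging trick already used in the proof of Lemma \ref{mxc}. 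Only the $l = m$ term survives, giving exactly $h(k, m, n, \pi_j^k)\,\overline{\phi_{j,m}^k(g_2)}$.

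The analogous Eisenstein computation is first carried out in the region $\mathrm{Re}(s) > 1$, where $E_n(\cdot, s)$ converges absolutely. There one unfolds $\int K_{m,n}(g_1, g_2)\overline{E_n(g_1, s)}\, dg_1$ against the $\Gamma_\infty\backslash\Gamma$ coset sum for $\overline{E_n(\cdot, s)} = E_{-n}(\cdot, \bar s)$, substitutes $u = g_1^{-1}\gamma' g_2$, and expands the right translate $\psi_{\bar s, -n}(\gamma' g_2 u^{-1})$ in the weight basis $\{\psi_{\bar s, l}\}_l$ using principal series matrix coefficients. The same bi-equivariance selection rule isolates the $l = -m$ term; resumming over $\gamma' \in \Gamma_\infty\backslash\Gamma$ reconstructs $\overline{E_m(g_2, s)}$, and the surviving scalar is precisely $h(k, m, n, t)$ in the sense of \eqref{defhr}. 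Meromorphic continuation in $s$ extends the identity to the unitary line. Compactness of the subtracted kernel as an operator $S_{\text{cusp}}^m \to S_{\text{cusp}}^n$ now follows from Hilbert--Schmidt bounds: Lemma \ref{mxc} provides rapid decay of $h(k, m, n, \pi_j^k)$ in $k$ and in the spectral parameter, and combined with the polynomial Weyl law this makes $\sum_{j,k} h(k, m, n, \pi_j^k)\, \phi_{j,n}^k(g_1)\overline{\phi_{j,m}^k(g_2)}$ absolutely convergent in $L^2(S\mathbb{X} \times S\mathbb{X})$. Triviality on $\Theta$ is automatic from the spectral identification: for a weight-$m$ Eisenstein wave packet $f = \int h(t') E_m(\cdot, 1/2+it')\, dt'$, both $T_{K_{m,n}} f$ and the subtracted Eisenstein integral applied to $f$ yield the same wave packet $\int h(t')\,h(k, m, n, t')\, E_n(\cdot, 1/2+it')\, dt'$.

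The main technical obstacle is justifying the Eisenstein step. Because $E_n(\cdot, 1/2+it) \notin L^2(S\mathbb{X})$, both the unfolding against $\overline{E_n}$ and the formal expansion $\psi_{\bar s, -n}(g u^{-1}) = \sum_l c_l(u)\,\psi_{\bar s, l}(g)$ require care; the standard remedies are Maass--Selberg truncation, or testing against smooth Eisenstein wave packets with rapidly decaying envelope and taking a limit. Both ultimately reduce the required identities to the absolute-convergence range $\mathrm{Re}(s) > 1$ followed by analytic continuation to $\mathrm{Re}(s) = 1/2$.
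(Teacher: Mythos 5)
Your proposal is correct in outline, but it takes a genuinely different route from the paper. You re-derive the off-diagonal pre-trace formula from scratch: expand $K_{m,n}(\cdot,g_2)$ spectrally in $g_1$, compute the cuspidal coefficients by unfolding and the $K$-type selection rule, identify the continuous part by unfolding against $E_n(\cdot,s)$ in $\mathrm{Re}(s)>1$ and continuing, and then get compactness on $S^m_{\mathrm{cusp}}$ from a Hilbert--Schmidt bound on the surviving cuspidal kernel. The paper instead never touches the continuous spectrum directly: it writes $T_K\circ\mathbf{E}^{m-n}$ as an integral operator with kernel built from $k'=\overline{\mathbf{E}^{n-m}}k$, applies the known diagonal statement (Theorem \ref{sel:thm}, i.e.\ Hejhal's weight-$n$ pre-trace formula) to that kernel, and then ``divides'' by $\mathbf{E}^{m-n}$ --- a case analysis on the signs of $m,n$ showing that $\mathbf{E}^{m-n}$ has a bounded inverse modulo a finite-dimensional piece --- finally converting $h(k',n,n,t)$ into $h(k,m,n,t)$ via the relation $\overline{c_{m,n}(\tfrac12+it)}=c_{n,m}(\tfrac12+it)$. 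What your approach buys is self-containedness and a direct proof of the full expansion (essentially Theorem \ref{full} one $(m,n)$-component at a time); what it costs is exactly the delicate Eisenstein unfolding/continuation step you flag at the end, which is the hardest part of the classical argument and which the paper's ladder-operator reduction avoids entirely by black-boxing it into the cited diagonal case.

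Two small points to repair if you carry this out. First, the Hilbert--Schmidt bound $\sum_{j,e}|h(k,m,n,\pi_j^e)|^2<\infty$ does not follow from Lemma \ref{mxc} alone: that lemma decays only in the weights $m,n$, which suffices for the discrete series (where $|m|,|n|\ge e$ forces decay in $e$) but gives nothing in $t_j$ for the infinitely many Maass representations with $e=0$. You need Lemma \ref{important} for the decay in the spectral parameter, which applies because those cuspidal representations are unitary principal series. Second, for ``acts trivially on $\Theta$'' and for the restriction to $S^m_{\mathrm{cusp}}$ you can shortcut your own argument: on $S^m_{\mathrm{cusp}}$ the subtracted Eisenstein kernel acts as zero by orthogonality, and $T_{K_{m,n}}$ restricted there is literally the operator $f\mapsto\sum_{j,e}h(k,m,n,\pi_j^e)\langle f,\phi_{j,m}^e\rangle\phi_{j,n}^e$, so compactness needs only the coefficient bound and not the pointwise validity of the expansion.
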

\begin{proof}
Note that
\[
\int \mathbf{E}^{m-n}_{g_2}\left( K\left(g_1,g_2\right) f \left(g_2\right) \right) dg_2=0
\]
for every $g_1$,   $m\neq n$, and $ f\in C^{\infty}_0 \left(\Gamma \backslash PSL_2\left(\mathbb{R}\right)\right)$.
Hence
\[
T_{K} \mathbf{E}^{m-n}:  C^{\infty}_0 \left(\Gamma \backslash PSL_2\left(\mathbb{R}\right)\right)  \to C^{\infty}_0 \left(\Gamma \backslash PSL_2\left(\mathbb{R}\right)\right)
\]
is an integral operator with the integral kernel
\[
K'\left(g_1,g_2\right)=\sum_{\gamma \in \Gamma} k' \left(g_1^{-1}\gamma g_2\right),
\]
where
\[
k'\left(g\right)=\left(-\mathbf{E}\right)^{m-n}k \left(g\right)=\overline{\mathbf{E}^{n-m}}k\left(g\right) .
\]
Then by Theorem \ref{sel:thm}, we see that
\[
K''\left(g_1,g_2\right) = K'_{n,n}\left(g_1,g_2\right) -\frac{1}{4\pi} \int_{-\infty}^\infty h\left(k',n,n,t\right) E_n \left(g_1,\frac{1}{2}+it\right)\overline{E_n \left(g_2,\frac{1}{2}+it\right)} dt
\]
defines a compact operator $T_{K''}:S_{\text{cusp}}^n \to S_{\text{cusp}}^n$ that acts trivially on $\Theta$. Note that
\begin{multline*}
\int_{-\infty}^\infty h\left(k',n,n,t\right) E_n \left(g_1,\frac{1}{2}+it\right)\overline{E_n \left(g_2,\frac{1}{2}+it\right)} dt\\
 = \int_{-\infty}^\infty \frac{h\left(k',n,n,t\right)}{\overline{c_{m,n}\left(\frac{1}{2}+it\right)}} E_n \left(g_1,\frac{1}{2}+it\right)\overline{\mathbf{E}^{n-m}E_m \left(g_2,\frac{1}{2}+it\right)} dt.
\end{multline*}
Let
\[
K'''\left(g_1,g_2\right) := K_{m,n}\left(g_1,g_2\right) - \frac{1}{4\pi}\int_{-\infty}^\infty \frac{h\left(k',n,n,t\right)}{\overline{c_{m,n}\left(\frac{1}{2}+it\right)}} E_n \left(g_1,\frac{1}{2}+it\right)\overline{E_m \left(g_2,\frac{1}{2}+it\right)} dt.
\]
Note that
\[
T_{K''} = T_{K'''} \circ \mathbf{E}^{m-n}.
\]
Firstly, since $\mathbf{E}^{m-n}$ does not annihilate the Eisenstein series, $T_{K'''}$ acts trivially on $\Theta$.

If $m>n\geq 0$ or $m<n\leq 0$, then as a map $S_{cusp}^n \to S_{cusp}^m$, $ker\left( \mathbf{E}^{m-n}\right)$ is empty, and we may decompose $S_{cusp}^m$ as
\[
S_{cusp}^m = Im\left(\mathbf{E}^{m-n}\right) \oplus R,
\]
where $R$ is a finite dimensional subspace of $S_{cusp}^m$ spanned by modular forms of weight $>n$ and their images under raising operators in $S_{cusp}^m$. Note that
\[
\left(\mathbf{E}^{m-n}\right)^{-1} :Im\left(\mathbf{E}^{m-n}\right) \to S_{cusp}^n
\]
is a bounded operator, hence
\[
T_{K'''}|_{Im\left(\mathbf{E}^{m-n}\right)} = T_{K''}\circ \left(\mathbf{E}^{m-n}\right)^{-1}
\]
is a compact operator. This implies that $T_{K'''}$ is a direct sum of a compact operator and finite dimensional linear operator, which is a compact operator.

If $n>m\geq 0$ or $n<m\leq 0$, then $\mathbf{E}^{m-n}:S_{\text{cusp}}^n \to S_{\text{cusp}}^m$ is surjective, and so we may define a bounded operator
\[
\left(\mathbf{E}^{m-n}\right)^{-1}: S_m \to \left(ker \left( \mathbf{E}^{m-n}\right)\right)^{\perp}
\]
from which it follows that
\[
T_{K'''} = T_{K''}\circ \left(\mathbf{E}^{m-n}\right)^{-1}
\]
is a compact operator.

If $n>0>m$ or $m>0>n$, then we further decompose $T_{K''}$ to
\[
S_{\text{cusp}}^n \xrightarrow{\mathbf{E}^{-n}} S_{\text{cusp}}^0 \xrightarrow{\mathbf{E}^{m}} S_{\text{cusp}}^m \xrightarrow{T_{K'''}} S_{\text{cusp}}^n,
\]
and then combine the above arguments to see that $T_{K''}$ is a compact operator.

Finally, observe that
\begin{align*}
h\left(k',n,n,t\right) &= \int_{PSL_2\left(\mathbb{R}\right)}\left(\overline{\mathbf{E}^{n-m}} k\left(g\right) \right)y^{\frac{1}{2}+it}e^{in\theta} dg\\
&=c_{n,m}\left(\frac{1}{2}+it\right)\int_{PSL_2\left(\mathbb{R}\right)} k\left(g\right)y^{\frac{1}{2}+it}e^{im\theta} dg,
\end{align*}
and we complete the proof using \eqref{symmetry}.
\end{proof}
\subsection{General case}
We are now ready to describe Selberg's pre-trace formula for $PSL_2\left(\mathbb{R}\right)$.
\begin{theorem}\label{full}
For $k \in C_0^\infty\left(PSL_2\left(\mathbb{R}\right)\right)$, let  $K$ be the integral kernel on $S\mathbb{X}$ defined by
\[
K\left(g_1,g_2\right) = \sum_{\gamma \in \Gamma} k\left(g_1,\gamma g_2\right).
\]
Then we have
\begin{multline*}
K\left(g_1,g_2\right) = \frac{9}{\pi^4}\iint K\left(g_1,g_2\right) dg_1dg_2 +   \sum_{\substack{e\geq 0 \\ 2|e}} \sum_{j=1}^{d_e} \sum_{\substack{m,n \in 2\mathbb{Z}\\ |m|,|n| \geq e}} h\left(k,m,n,\pi_j^{e}\right) \phi_{j,n}^e\left(g_1\right) \overline{\phi_{j,m}^e\left(g_2\right)}\\
+\frac{1}{4\pi}\sum_{m,n\in 2\mathbb{Z}}\int_{-\infty}^\infty h\left(k,m,n,t\right) E_n \left(g_1,\frac{1}{2}+it\right)\overline{E_m \left(g_2,\frac{1}{2}+it\right)} dt,
\end{multline*}
where $\pi_j^{e}$ is the irreducible unitary representation of $PSL_2\left(\mathbb{R}\right)$ associated to $\phi_{j}^e$.
\end{theorem}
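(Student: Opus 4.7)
The plan is to combine a Fourier decomposition in the two $SO(2)$ variables with the representation-theoretic spectral expansion of Theorem \ref{sel:diff}. First, I would expand $k$ as a double Fourier series in the left and right angular variables: $k = \sum_{m,n \in 2\mathbb{Z}} k_{m,n}$ with $k_{m,n}$ defined as in \eqref{kmndef}. Smoothness and compact support of $k$, combined with integration by parts in the two $\theta$ variables, give rapid decay of $k_{m,n}$ in $|m|,|n|$ together with all Sobolev norms, so that the decomposition and all resulting manipulations converge absolutely. This produces the weight-by-weight decomposition $K = \sum_{m,n} K_{m,n}$ of the periodized kernel, with $K_{m,n}$ defined by \eqref{kmndef2}.

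Next, I would apply Theorem \ref{sel:diff} term by term. Subtracting the explicit Eisenstein contribution
\[
\frac{1}{4\pi} \int_{-\infty}^{\infty} h(k,m,n,t)\, E_n(g_1, \tfrac{1}{2}+it) \overline{E_m(g_2, \tfrac{1}{2}+it)}\, dt
\]
leaves a compact integral operator $S_{\text{cusp}}^m \to S_{\text{cusp}}^n$ that annihilates $\Theta$. To identify its kernel, note that $T_K f = \int k(u) R_u f\, du$, so $T_K$ preserves every closed $G$-invariant subspace; in particular it maps each irreducible cuspidal component $W_{\pi_j^e}$ into itself. Restricted to $W_{\pi_j^e}$, the operator sends the unit weight-$m$ vector $\phi_{j,m}^e$ to a vector whose weight-$n$ coefficient is exactly $h(k,m,n,\pi_j^e)$ by the definition \eqref{defh}. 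Expanding the compact operator piece in the orthonormal basis $\{\phi_{j,l}^e\}_{j,e,l}$ of $L^2_{\text{cusp}}$ yields the cuspidal double sum in the statement, with the constraints $|m|,|n|\geq e$ arising from the discrete-series truncation (and from Maass vectors supported on all even $l$ when $e=0$).

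To finish, I would account for the constant-function subspace, which is orthogonal to both $\Theta$ and $L^2_{\text{cusp}}$. Using $\mathrm{vol}(S\mathbb{X}) = \pi^2/3$, the orthonormal constant is $\sqrt{3/\pi^2}$, and the corresponding rank-one component of $K$ is
\[
\frac{3}{\pi^2}\left\langle T_K \cdot 1, 1\right\rangle \cdot \frac{3}{\pi^2} = \frac{9}{\pi^4} \iint K(g_1,g_2)\, dg_1\, dg_2,
\]
which is the first term of the theorem. Adding the three pieces gives the claimed identity.

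The main obstacle is justifying absolute convergence of the double sum over $(m,n)$ and the triple sum including $j$, together with the interchange of $\sum_{m,n}$ with the spectral expansion applied to each $K_{m,n}$. This is controlled by Lemma \ref{mxc} (rapid polynomial decay of $h(k,m,n,\pi)$ in $|m|$ and $|n|$ in every $\pi$) together with Lemma \ref{important} (additional rapid decay in the continuous-spectrum parameter $t$). Combined with Weyl-type polynomial growth of the cuspidal spectrum, these estimates dominate the triple sum uniformly and thereby legitimize all the formal rearrangements, completing the derivation.
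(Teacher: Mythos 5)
Your proposal is correct and follows essentially the same route as the paper: the double Fourier decomposition $K=\sum_{m,n}K_{m,n}$, the application of Theorem \ref{sel:diff} to each piece, identification of the cuspidal kernel via the matrix coefficients $h\left(k,m,n,\pi_j^e\right)$, and the constant-term computation giving $\frac{9}{\pi^4}\iint K$. The only difference is cosmetic — you justify the convergence and rearrangements slightly more explicitly (via Lemma \ref{mxc}, Lemma \ref{important}, and Weyl-law growth) than the paper, which simply asserts uniform convergence on compacta.
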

\begin{proof}
We first note from \eqref{kmndef} and \eqref{kmndef2} that
\begin{align*}
K_{m,n}\left(g_1,g_2\right) &=\sum_{\gamma \in \Gamma} \frac{1}{4\pi^2}\int_{0}^{2\pi}\int_{0}^{2\pi} k\left(R_{\theta_1'} g_1^{-1} \gamma g_2 R_{\theta_2'}\right) e^{-in\theta_1'-im\theta_2'}d\theta_1' d\theta_2'\\
&=  \frac{1}{4\pi^2}\int_{0}^{2\pi}\int_{0}^{2\pi} \sum_{\gamma \in \Gamma} k\left(R_{-\theta_1'} g_1^{-1} \gamma g_2 R_{\theta_2'}\right) e^{in\theta_1'-im\theta_2'}d\theta_1' d\theta_2'\\
&= \frac{1}{4\pi^2}\int_{0}^{2\pi}\int_{0}^{2\pi} K\left(g_1 R_{\theta_1'},  g_2 R_{\theta_2'}\right) e^{in\theta_1'-im\theta_2'}d\theta_1' d\theta_2'\\
&= \frac{1}{4\pi^2}\int_{0}^{2\pi}\int_{0}^{2\pi} K\left(\left(x_1,y_1,\theta_1'\right), \left(x_2,y_2,\theta_2'\right)\right) e^{in\theta_1'-im\theta_2'}d\theta_1' d\theta_2' e^{-in\theta_1 + im\theta_2}.
\end{align*}
Therefore, we have the Fourier expansion of $K$,
\begin{equation*}
K\left(g_1,g_2\right) = \sum_{n,m \in 2\mathbb{Z}}K_{m,n}\left(g_1,g_2\right),
\end{equation*}
where the summation is uniform for $g_1$ and $g_2$ in compacta.

We infer from Theorem \ref{sel:diff} that
\[
K_{m,n}\left(g_1,g_2\right) -  \frac{1}{4\pi}\int_{-\infty}^\infty h\left(k,m,n,t\right) E_n \left(g_1,\frac{1}{2}+it\right)\overline{E_m \left(g_2,\frac{1}{2}+it\right)} dt
\]
defines a compact operator acting on $L_{\text{cusp}}$ that acts trivially on $\Theta$. Because it only acts non-trivially on weight $m$ forms, we see that
\begin{multline*}
K_{m,n}\left(g_1,g_2\right) -  \frac{1}{4\pi}\int_{-\infty}^\infty h\left(k,m,n,t\right) E_n \left(g_1,\frac{1}{2}+it\right)\overline{E_m \left(g_2,\frac{1}{2}+it\right)} dt \\
= \frac{9}{\pi^4}\iint K_{m,n}\left(g_1,g_2\right) dg_1dg_2 +   \sum_{\substack{e\geq 0 \\ 2|e}}^{\min\{|m|,|n|\}} \sum_{j=1}^{d_e} h\left(k,m,n,\pi_j^e\right) \phi_{j,n}^e\left(g_1\right) \overline{\phi_{j,m}^e\left(g_2\right)},
\end{multline*}
where we used  \eqref{defh}, and the fact that
\[
\int_{-\infty}^\infty h\left(k,m,n,t\right) E_n \left(g_1,\frac{1}{2}+it\right)\overline{E_m \left(g_2,\frac{1}{2}+it\right)} dt
\]
acts trivially on $L_{\text{cusp}}^2$. Note that the integral on the right-hand side of the equation vanishes unless $m=n=0$, in which case it is identical to
\[
\frac{9}{\pi^4}\iint K\left(g_1,g_2\right) dg_1dg_2.\qedhere
\]
\end{proof}

\subsection{Proof of Theorem \ref{main}}
We now present a proof of Theorem \ref{main}. Recall from Theorem \ref{full} that we have
\begin{multline*}
K\left(g_1,g_2\right) = \frac{9}{\pi^4}\iint K\left(g_1,g_2\right) dg_1dg_2 +   \sum_{\substack{e\geq 0 \\ 2|e}} \sum_{j=1}^{d_e} \sum_{\substack{m,n \in 2\mathbb{Z}\\ |m|,|n| \geq e}} h\left(k,m,n,\pi_j^{e}\right) \phi_{j,n}^e\left(g_1\right) \overline{\phi_{j,m}^e\left(g_2\right)}\\
+\frac{1}{4\pi}\sum_{m,n\in 2\mathbb{Z}}\int_{-\infty}^\infty h\left(k,m,n,t\right) E_n \left(g_1,\frac{1}{2}+it\right)\overline{E_m \left(g_2,\frac{1}{2}+it\right)} dt.
\end{multline*}
Therefore we have
\[
\frac{1}{l\left(\mathscr{C}_{d_1}\right)l\left(\mathscr{C}_{d_2}\right)}\int_{\mathscr{C}_{d_2}}\int_{\mathscr{C}_{d_1}}K\left(s_1,s_2\right) ds_1ds_2 = M+D+\frac{1}{4\pi}E,
\]
where
\[
M = \frac{9}{\pi^4}\iint K\left(g_1,g_2\right) dg_1dg_2,
\]
\begin{multline*}
D= \sum_{\substack{e\geq 0 \\ 2|e}} \sum_{j=1}^{d_e} \sum_{\substack{m,n \in 2\mathbb{Z}\\ |m|,|n| \geq e}} h\left(k,m,n,\pi_j^{e}\right) \frac{\mu_{d_1}\left(\phi_{j,n}^e\right)}{l\left(\mathscr{C}_{d_1}\right)} \frac{\overline{\mu_{d_2}\left(\phi_{j,m}^e\right)}}{l\left(\mathscr{C}_{d_2}\right)}\\
=\sum_{\substack{e\geq 0 \\ 4|e}} \sum_{j=1}^{d_e} \frac{\mu_{d_1}\left(\phi_{j}^e\right)}{l\left(\mathscr{C}_{d_1}\right)}\frac{\overline{\mu_{d_2}\left(\phi_{j}^e\right)}}{l\left(\mathscr{C}_{d_2}\right)}\sum_{\substack{m,n \in 4\mathbb{Z}\\ |m|,|n| \geq e}}  h\left(k,m,n,\pi_j^{e}\right) \eta_j^e\left(\phi_{j,n}^e\right) \overline{\eta_j^e\left(\phi_{j,m}^e\right)},
\end{multline*}
and
\begin{multline*}
E=\sum_{m,n\in 2\mathbb{Z}}\int_{-\infty}^\infty h\left(k,m,n,t\right) \frac{\mu_{d_1}\left(E_n \left(\cdot,\frac{1}{2}+it\right)\right)}{l\left(\mathscr{C}_{d_1}\right)}\frac{\overline{\mu_{d_2}\left(E_m \left(\cdot,\frac{1}{2}+it\right)\right)}}{l\left(\mathscr{C}_{d_2}\right)} dt\\
=\sum_{m,n\in 4\mathbb{Z}}\int_{-\infty}^\infty h\left(k,m,n,t\right) \frac{\mu_{d_1}\left(E_0 \left(\cdot,\frac{1}{2}+it\right)\right)}{l\left(\mathscr{C}_{d_1}\right)}\frac{\overline{\mu_{d_2}\left(E_0 \left(\cdot,\frac{1}{2}+it\right)\right)}}{l\left(\mathscr{C}_{d_2}\right)} \eta\left(n,\frac{1}{2}+it\right)\overline{\eta\left(m,\frac{1}{2}+it\right)} dt.
\end{multline*}
For $D$ with $e>0$, we use \eqref{period:2}, \eqref{per:1}, Lemma \ref{mxc} with $N_1=N_2=5$, and \eqref{sie} to see that
\begin{align*}
 &\sum_{\substack{e > 0 \\ 4|e}} \sum_{j=1}^{d_e} \frac{\mu_{d_1}\left(\phi_{j}^e\right)}{l\left(\mathscr{C}_{d_1}\right)}\frac{\overline{\mu_{d_2}\left(\phi_{j}^e\right)}}{l\left(\mathscr{C}_{d_2}\right)}\sum_{\substack{m,n \in 4\mathbb{Z}\\ |m|,|n| \geq e}}  h\left(k,m,n,\pi_j^{e}\right)  \eta_j^e\left(\phi_{j,n}^e\right) \overline{\eta_j^e\left(\phi_{j,m}^e\right)}\\
\ll_\epsilon &\sum_{\substack{e > 0 \\ 4|e}} e^{6.8} \left(d_1d_2\right)^{-\frac{25}{512}+\epsilon}\sum_{\substack{m,n \in 4\mathbb{Z}\\ |m|,|n| \geq e}} |m|^{-5}|n|^{-5} \|k\|_{W^{10,\infty}}\\
\ll & \left(d_1d_2\right)^{-\frac{25}{512}+\epsilon} \|k\|_{W^{10,\infty}}.
\end{align*}
For $D$ with $e=0$, we use \eqref{period:1}, \eqref{per:2}, Lemma \ref{important} with $N_1=N_2=2$ and $N_3=4$, and \eqref{sie} to see that
\begin{align*}
&\sum_{j=1}^{\infty} \frac{\mu_{d_1}\left(\phi_{j}^0\right)}{l\left(\mathscr{C}_{d_1}\right)}\frac{\overline{\mu_{d_2}\left(\phi_{j}^0\right)}}{l\left(\mathscr{C}_{d_2}\right)}\sum_{\substack{m,n \in 4\mathbb{Z}}}  h\left(k,m,n,\pi_j^{0}\right)  \eta_j^0\left(\phi_{j,n}^0\right) \overline{\eta_j^0\left(\phi_{j,m}^0\right)}\\
\ll_\epsilon  &\sum_{j=1}^{\infty} \left(d_1d_2\right)^{-\frac{25}{512}+\epsilon} |t_j|^{\frac{3}{2}} \sum_{\substack{m,n \in 4\mathbb{Z}}} \left(1+|m|\right)^{-2}\left(1+|n|\right)^{-2} \left(1+|t_j|\right)^{-4} \|k\|_{W^{8,\infty}}\\
\ll &\left(d_1d_2\right)^{-\frac{25}{512}+\epsilon}\|k\|_{W^{8,\infty}}.
\end{align*}
For $E$, we use \eqref{period:3}, \eqref{per:3}, Lemma \ref{important} with $N_1=N_2=2$ and $N_3=3$, and \eqref{sie} to see that
\begin{align*}
&\sum_{m,n\in 4\mathbb{Z}}\int_{-\infty}^\infty h\left(k,m,n,t\right)  \frac{\mu_{d_1}\left(E_0 \left(\cdot,\frac{1}{2}+it\right)\right)}{l\left(\mathscr{C}_{d_1}\right)}\frac{\overline{\mu_{d_2}\left(E_0 \left(\cdot,\frac{1}{2}+it\right)\right)}}{l\left(\mathscr{C}_{d_2}\right)} \eta\left(n,\frac{1}{2}+it\right)\overline{\eta\left(m,\frac{1}{2}+it\right)} dt\\
\ll_\epsilon &\sum_{m,n\in 4\mathbb{Z}}\int_{-\infty}^\infty   \left(d_1d_2\right)^{-\frac{1}{16}+\epsilon} \left(1+|m|\right)^{-2}\left(1+|n|\right)^{-2}\left(|t|+1\right)^{-2}\|k\|_{W^{7,\infty}} dt\\
\ll  &\left(d_1d_2\right)^{-\frac{1}{16}+\epsilon} \|k\|_{W^{7,\infty}}.
\end{align*}
Now observe that
\[
\iint K\left(g_1,g_2\right) dg_1dg_2 = \int_{S\mathbb{X}} \int_{S\mathbb{H}} k\left(g_1^{-1}g_2\right) dg_2dg_1 = \frac{\pi^2}{3} \int_{S\mathbb{H}} k\left(g\right)dg,
\]
and so
\[
M =  \frac{3}{\pi^2} \int_{S\mathbb{H}} k\left(g\right)dg.
\]
So far, we proved the following:
\begin{theorem}\label{mainlem}
For any $k \in C_0^\infty \left(S\mathbb{H}\right)$, we have
\[
\frac{1}{l\left(\mathscr{C}_{d_1}\right)l\left(\mathscr{C}_{d_2}\right)}\int_{\mathscr{C}_{d_2}}\int_{\mathscr{C}_{d_1}}K\left(s_1,s_2\right) ds_1ds_2 = \frac{3}{\pi^2} \int_{S\mathbb{H}} k\left(g\right)dg + O_\epsilon\left(\left(d_1d_2\right)^{-\frac{25}{512}+\epsilon}\|k\|_{W^{10,\infty}}\right).
\]
\end{theorem}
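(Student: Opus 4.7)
The plan is to apply Selberg's pre-trace formula (Theorem~\ref{full}) to the integral kernel $K(g_1,g_2)=\sum_{\gamma\in\Gamma}k(g_1^{-1}\gamma g_2)$, integrate the resulting spectral expansion against $\mu_{d_1}\otimes\mu_{d_2}$, and normalize by $l(\mathscr{C}_{d_1})l(\mathscr{C}_{d_2})$. The decomposition produces a main term $M$, a cuspidal term $D$, and an Eisenstein term $E$. For $M$, I would unfold the double integral via a fundamental domain argument:
\[
\iint K(g_1,g_2)\,dg_1dg_2 = \mathrm{vol}(S\mathbb{X})\int_{S\mathbb{H}}k(g)\,dg = \frac{\pi^2}{3}\int_{S\mathbb{H}}k(g)\,dg,
\]
which combines with the coefficient $9/\pi^4$ from Theorem~\ref{full} to yield the advertised leading term $\frac{3}{\pi^2}\int_{S\mathbb{H}}k(g)\,dg$.

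For the cuspidal term $D$, I would use the invariant linear form $\eta_j^e$ (\S\ref{lrs}) to factor $\mu_d(\phi_{j,n}^e)=\mu_d(\phi_j^e)\,\eta_j^e(\phi_{j,n}^e)$. This reduces the analysis to two ingredients: the period integral bounds (\eqref{per:1} for holomorphic $\phi_j^e$ with the $m^{2.9}$ factor, and \eqref{per:2} for Maass $\phi_j^0$ with the $|t_j|^{3/4}$ factor), together with the decay of matrix coefficients $h(k,m,n,\pi_j^e)$. In the holomorphic case ($e>0$), Lemma~\ref{mxc} applied with $N_1=N_2=5$ gives enough decay in the weight indices to absorb the $e^{2.9}$ factor and make the triple sum over $e,m,n$ converge absolutely. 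In the Maass case ($e=0$) the $|t_j|^{3/4}$ factor combined with Weyl's law forces me to use the finer Lemma~\ref{important} with $N_3=4$, giving the extra decay $(1+|t_j|)^{-4}$ needed for the spectral sum over $j$ to converge.

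For the Eisenstein term $E$, I would use the explicit factorization \eqref{period:3}, namely $\mu_d(E_m(\cdot,\tfrac{1}{2}+it))=\eta(m,t)\,\mu_d(E_0(\cdot,\tfrac{1}{2}+it))$, together with the hybrid subconvexity-based bound \eqref{per:3} for $\mu_d(E_0)$. Then Lemma~\ref{important} with $N_3=3$ and $N_1=N_2=2$ simultaneously controls the integral over $t$ and the double sum over $m,n$. The resulting Eisenstein contribution is $O_\epsilon((d_1d_2)^{-1/16+\epsilon}\|k\|_{W^{7,\infty}})$, which is actually better than the cuspidal bound. In every case the denominator $l(\mathscr{C}_{d_1})l(\mathscr{C}_{d_2})$ is handled by the lower bound \eqref{sie} coming from Siegel's theorem, which provides $l(\mathscr{C}_d)\gg_\epsilon d^{1/2-\epsilon}$. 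Collecting all contributions yields the claimed error $O_\epsilon((d_1d_2)^{-25/512+\epsilon}\|k\|_{W^{10,\infty}})$.

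The main obstacle is controlling the cuspidal part uniformly in all three relevant parameters at once — the Laplace spectral parameter $t_j$, the weight indices $m,n$, and the discriminants $d_1,d_2$. The coarse Lemma~\ref{mxc} gives decay only in $m$ and $n$, which is insufficient in the Maass case where the number of cusp forms with $|t_j|\le T$ grows like $T^2$. This is precisely why the refined principal-series estimate of Lemma~\ref{important}, giving independent decay in $|m|, |n|, |t|$, is indispensable; once it is invoked, the rest is bookkeeping combining the various period bounds \eqref{per:1}--\eqref{per:3} with the Sobolev norm of $k$.
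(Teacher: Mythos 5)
Your proposal is correct and matches the paper's own proof essentially step for step: the same decomposition into $M+D+\tfrac{1}{4\pi}E$ via Theorem~\ref{full}, the same factorization through $\eta_j^e$, the same parameter choices (Lemma~\ref{mxc} with $N_1=N_2=5$ for $e>0$, Lemma~\ref{important} with $N_1=N_2=2$, $N_3=4$ for $e=0$ and $N_3=3$ for the Eisenstein part), and the same use of \eqref{per:1}--\eqref{per:3} and \eqref{sie}. The only nitpick is that the holomorphic case actually produces an $e^{6.8}$ factor (two period integrals each contributing $e^{2.9}$ plus $d_e\ll e$), not $e^{2.9}$, but your choice $N_1=N_2=5$ still absorbs it.
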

\begin{remark}
Note that this is \textit{not} the same as equidistribution of $\mathscr{C}_{d_1} \times \mathscr{C}_{d_2}$ in $S\mathbb{X} \times S\mathbb{X}$. For instance, if we replace $K$ with any compactly supported smooth function in $S\mathbb{X} \times S\mathbb{X}$, then the equality may not hold when $d_1$ is fixed and $d_2$ tends to $\infty$.
\end{remark}
In order to prove Theorem \ref{main}, we make specific choices of $k$ in Theorem \ref{mainlem}. We let $K_1$ and $K_2$ to be the kernel corresponding to $k=m_\delta^{\theta_1,\theta_2}$ and $k=M_\delta^{\theta_1,\theta_2}$ defined in Lemma \ref{lem1}, respectively. Then by Lemma \ref{lem1}, we have
\begin{multline*}
\frac{1}{l\left(\mathscr{C}_{d_1}\right)l\left(\mathscr{C}_{d_2}\right)}\int_{\mathscr{C}_{d_2}}\int_{\mathscr{C}_{d_1}}K_1\left(s_1,s_2\right) ds_1ds_2 \leq \frac{1}{l\left(\mathscr{C}_{d_1}\right)l\left(\mathscr{C}_{d_2}\right)}\int_{\mathscr{C}_{d_2}}\int_{\mathscr{C}_{d_1}}K_\delta^{\theta_1,\theta_2}\left(s_1,s_2\right) ds_1ds_2\\
\leq \frac{1}{l\left(\mathscr{C}_{d_1}\right)l\left(\mathscr{C}_{d_2}\right)}\int_{\mathscr{C}_{d_2}}\int_{\mathscr{C}_{d_1}}K_2\left(s_1,s_2\right) ds_1ds_2,
\end{multline*}
while we know from Lemma \ref{lemma1} that
\[
\int_{\mathscr{C}_{d_2}}\int_{\mathscr{C}_{d_1}}K_\delta^{\theta_1,\theta_2}\left(s_1,s_2\right) ds_1ds_2 = 4\delta^2 I_{\theta_1,\theta_2} \left(C_{d_1},C_{d_2}\right).
\]
We now apply Theorem \ref{mainlem} and Lemma \ref{lem1} to see that
\[
\frac{1}{l\left(\mathscr{C}_{d_1}\right)l\left(\mathscr{C}_{d_2}\right)}\int_{\mathscr{C}_{d_2}}\int_{\mathscr{C}_{d_1}}K_i\left(s_1,s_2\right) ds_1ds_2 = \frac{3}{\pi^2}\left(\cos\theta_1-\cos\theta_2\right) \delta^2  \left(1+O\left(\varepsilon\right)\right) +O_\epsilon\left(\left(d_1d_2\right)^{-\frac{25}{512}+\epsilon}\varepsilon^{-10}\right).
\]
Therefore, we have
\[
\frac{I_{\theta_1,\theta_2} \left(C_{d_1},C_{d_2}\right)}{l\left(C_{d_1}\right)l\left(C_{d_2}\right)} = \frac{3}{\pi^2} \left(\cos\theta_1-\cos\theta_2\right) \left(1+O\left(\delta^2\right)\right)\left(1+O\left(\varepsilon\right)\right)+O_\epsilon\left(\left(d_1d_2\right)^{-\frac{25}{512}+\epsilon}\varepsilon^{-10}\delta^{-2}\right),
\]
and by choosing $\delta^2 = \varepsilon = \left(d_1d_2\right)^{-\frac{25}{6144}}$, we complete the proof of Theorem \ref{main}.

\bibliography{20210224}

\begin{thebibliography}{BHM07}

\bibitem[BHM07]{weyl}
V.~Blomer, G.~Harcos, and P.~Michel.
\newblock A {B}urgess-like subconvex bound for twisted {$L$}-functions.
\newblock {\em Forum Math.}, 19(1):61--105, 2007.
\newblock Appendix 2 by Z. Mao.

\bibitem[BM10]{BaruchMao}
Ehud~Moshe Baruch and Zhengyu Mao.
\newblock A generalized {K}ohnen-{Z}agier formula for {M}aass forms.
\newblock {\em J. Lond. Math. Soc. (2)}, 82(1):1--16, 2010.

\bibitem[Bon86]{MR847953}
Francis Bonahon.
\newblock Bouts des vari\'{e}t\'{e}s hyperboliques de dimension {$3$}.
\newblock {\em Ann. of Math. (2)}, 124(1):71--158, 1986.

\bibitem[Dav67]{Davenport}
Harold Davenport.
\newblock {\em Multiplicative number theory}, volume 1966 of {\em Lectures
  given at the University of Michigan, Winter Term}.
\newblock Markham Publishing Co., Chicago, Ill., 1967.

\bibitem[Duk88]{duke}
W.~Duke.
\newblock Hyperbolic distribution problems and half-integral weight {M}aass
  forms.
\newblock {\em Invent. Math.}, 92(1):73--90, 1988.

\bibitem[DV21]{DV2}
Henri Darmon and Jan Vonk.
\newblock Arithmetic intersections of modular geodesics.
\newblock {\em Journal of Number Theory}, 2021.

\bibitem[HB80]{hybrid}
D.~R. Heath-Brown.
\newblock Hybrid bounds for {D}irichlet {$L$}-functions. {II}.
\newblock {\em Quart. J. Math. Oxford Ser. (2)}, 31(122):157--167, 1980.

\bibitem[Hej76]{hejhal}
Dennis~A. Hejhal.
\newblock {\em The {S}elberg trace formula for {${\rm PSL}(2,R)$}. {V}ol. {I}}.
\newblock Lecture Notes in Mathematics, Vol. 548. Springer-Verlag, Berlin-New
  York, 1976.

\bibitem[HJ15]{herrera}
Yoe~Alexander Herrera~Jaramillo.
\newblock Intersection numbers of geodesic arcs.
\newblock {\em Rev. Colombiana Mat.}, 49(2):307--319, 2015.

\bibitem[Jak94]{dima}
Dmitry Jakobson.
\newblock Quantum unique ergodicity for {E}isenstein series on {${\rm
  PSL}_2({\bf Z})\backslash {\rm PSL}_2({\bf R})$}.
\newblock {\em Ann. Inst. Fourier (Grenoble)}, 44(5):1477--1504, 1994.

\bibitem[JR21]{JR}
Junehyuk Jung and Alan~W. Reid.
\newblock Embedding closed totally geodesic surfaces in {B}ianchi orbifolds.
\newblock {\em to appear Math. Res. Lett.}, 2021.

\bibitem[Kna01]{knapp}
Anthony~W. Knapp.
\newblock {\em Representation theory of semisimple groups}.
\newblock Princeton Landmarks in Mathematics. Princeton University Press,
  Princeton, NJ, 2001.
\newblock An overview based on examples, Reprint of the 1986 original.

\bibitem[KS93]{Katok}
Svetlana Katok and Peter Sarnak.
\newblock Heegner points, cycles and {M}aass forms.
\newblock {\em Israel J. Math.}, 84(1-2):193--227, 1993.

\bibitem[Kub73]{kubota}
Tomio Kubota.
\newblock {\em Elementary theory of {E}isenstein series}.
\newblock Kodansha Ltd., Tokyo; Halsted Press [John Wiley \& Sons], New
  York-London-Sydney, 1973.

\bibitem[KZ81]{KohnenZagier}
W.~Kohnen and D.~Zagier.
\newblock Values of {$L$}-series of modular forms at the center of the critical
  strip.
\newblock {\em Invent. Math.}, 64(2):175--198, 1981.

\bibitem[Lal14]{Lalley}
Steven~P. Lalley.
\newblock Statistical regularities of self-intersection counts for geodesics on
  negatively curved surfaces.
\newblock {\em Duke Math. J.}, 163(6):1191--1261, 2014.

\bibitem[Lan85]{lang}
Serge Lang.
\newblock {\em {${\rm SL}_2({\bf R})$}}, volume 105 of {\em Graduate Texts in
  Mathematics}.
\newblock Springer-Verlag, New York, 1985.
\newblock Reprint of the 1975 edition.

\bibitem[LRl20]{RadziwillLester}
Stephen Lester and Maksym Radziwi\l~\l.
\newblock Quantum unique ergodicity for half-integral weight automorphic forms.
\newblock {\em Duke Math. J.}, 169(2):279--351, 2020.

\bibitem[LRS09]{LRS}
Wenzhi Luo, Ze\'{e}v Rudnick, and Peter Sarnak.
\newblock The variance of arithmetic measures associated to closed geodesics on
  the modular surface.
\newblock {\em J. Mod. Dyn.}, 3(2):271--309, 2009.

\bibitem[PS06]{PS}
Mark Pollicott and Richard Sharp.
\newblock Angular self-intersections for closed geodesics on surfaces.
\newblock {\em Proc. Amer. Math. Soc.}, 134(2):419--426, 2006.

\bibitem[Ric19]{rick}
James Rickards.
\newblock Intersections of closed geodesics on the modular curve.
\newblock {\em arXiv:1909.04103}, 2019.

\bibitem[Riv01]{riv}
Igor Rivin.
\newblock Simple curves on surfaces.
\newblock {\em Geom. Dedicata}, 87(1-3):345--360, 2001.

\bibitem[Sar80]{PGT}
Peter~Clive Sarnak.
\newblock {\em P{RIME} {GEODESIC} {THEOREMS}}.
\newblock ProQuest LLC, Ann Arbor, MI, 1980.
\newblock Thesis (Ph.D.)--Stanford University.

\bibitem[Sar82]{sar82}
Peter Sarnak.
\newblock Class numbers of indefinite binary quadratic forms.
\newblock {\em J. Number Theory}, 15(2):229--247, 1982.

\bibitem[Shi75]{Shintani}
Takuro Shintani.
\newblock On construction of holomorphic cusp forms of half integral weight.
\newblock {\em Nagoya Math. J.}, 58:83--126, 1975.

\bibitem[{Sie}35]{siegel}
C.~L. {Siegel}.
\newblock {\"Uber die Classenzahl quadratischer Zahlk\"orper}.
\newblock {\em {Acta Arith.}}, 1:83--86, 1935.

\bibitem[Sie65]{Siegeladv}
Carl~Ludwig Siegel.
\newblock {\em Lectures on advanced analytic number theory}.
\newblock Notes by S. Raghavan. Tata Institute of Fundamental Research Lectures
  on Mathematics, No. 23. Tata Institute of Fundamental Research, Bombay, 1965.

\bibitem[TS20]{leastprime}
Naser Talebizadeh~Sardari.
\newblock {The least prime number represented by a binary quadratic form}.
\newblock {\em J. Eur. Math. Soc.}, 2020.

\bibitem[Zag81]{zagierid}
D.~Zagier.
\newblock Eisenstein series and the {R}iemann zeta function.
\newblock In {\em Automorphic forms, representation theory and arithmetic
  ({B}ombay, 1979)}, volume~10 of {\em Tata Inst. Fund. Res. Studies in Math.},
  pages 275--301. Tata Inst. Fundamental Res., Bombay, 1981.

\end{thebibliography}
\bibliographystyle{alpha}

\appendix
\section{Jacobian computation}\label{app2}
Recall that $\Psi:AKA \to SL_2\left(\mathbb{R}\right)$ is given by
\[
\left(t_1,\varphi,t_2\right) \mapsto \begin{pmatrix}e^{\frac{t_1}{2}} & 0 \\ 0 & e^{-\frac{t_1}{2}}\end{pmatrix} R_{\frac{\varphi}{2}}\begin{pmatrix}e^{-\frac{t_2}{2}} & 0 \\ 0 & e^{\frac{t_2}{2}}\end{pmatrix}=\begin{pmatrix}e^{\frac{t_1-t_2}{2}}\cos \frac{\varphi}{2} &-e^{\frac{t_1+t_2}{2}}\sin \frac{\varphi}{2} \\e^{\frac{-t_1-t_2}{2}}\sin \frac{\varphi}{2} &e^{\frac{t_2-t_1}{2}}\cos \frac{\varphi}{2} \end{pmatrix}.
\]
In this section, we compute the pullback of $dV = \frac{dxdyd\theta}{y^2}$ under $\Psi$. We start with the identity
\[
\begin{pmatrix}e^{\frac{t_1-t_2}{2}}\cos \frac{\varphi}{2} &-e^{\frac{t_1+t_2}{2}}\sin \frac{\varphi}{2} \\e^{\frac{-t_1-t_2}{2}}\sin \frac{\varphi}{2} &e^{\frac{t_2-t_1}{2}}\cos \frac{\varphi}{2} \end{pmatrix}= n\left(x\right)a\left(y\right)R_\theta = \begin{pmatrix}*&*\\\frac{\sin \theta}{\sqrt{y}}&\frac{\cos \theta}{\sqrt{y}}\end{pmatrix}.
\]
By comparing the image of $i \in \mathbb{H}$, we have
\[
x+iy = \frac{e^{\frac{t_1-t_2}{2}}\cos \frac{\varphi}{2}i -e^{\frac{t_1+t_2}{2}}\sin \frac{\varphi}{2} }{e^{\frac{-t_1-t_2}{2}}\sin \frac{\varphi}{2}i +e^{\frac{t_2-t_1}{2}}\cos \frac{\varphi}{2}},
\]
and for simplicity, we write this as $\frac{A}{B}$. By comparing the second row of each matrix, we have
\[
\frac{e^{i\theta}}{\sqrt{y}} = B.
\]
From a quick computation, we see that
\[
A_{t_1} =\frac{A}{2} ,~ B_{t_1} = -\frac{B}{2},~A_{t_2} = \frac{\overline{A}}{2}, ~B_{t_2} = \frac{\overline{B}}{2},~A_{\varphi} = -\frac{e^{t_1}}{2}B,~ B_\varphi = \frac{e^{-t_1}}{2}A,~\text{Im}A\overline{B} =1,~y= \frac{1}{|B|^2}.
\]
We use these to express the Jacobian matrix in terms of $A$ and $B$ as follows
\[
\frac{\partial\left(x,y,\theta\right)}{\partial\left(t_1,t_2,\varphi\right)} =
\begin{pmatrix}
\text{Re}\frac{A}{B}&\text{Im}\frac{1}{B^2}&\text{Re}\left(-\frac{e^{t_1}}{2}-\frac{e^{-t_1}}{2}\frac{A^2}{B^2}\right)\\
\text{Im}\frac{A}{B}&-\text{Re}\frac{1}{B^2}&\text{Im}\left(-\frac{e^{t_1}}{2}-\frac{e^{-t_1}}{2}\frac{A^2}{B^2}\right)\\
0&\frac{1}{2}\text{Im}\frac{\overline{B}}{B}&\frac{e^{-t_1}}{2|B|^2}
\end{pmatrix}.
\]
From this, we have
\begin{align*}
\frac{1}{y^2}\left|\frac{\partial\left(x,y,\theta\right)}{\partial\left(t_1,t_2,\varphi\right)}\right|
&=|B|^4\left|\frac{\partial\left(x,y,\theta\right)}{\partial\left(t_1,t_2,\varphi\right)}\right| \\
&= \left| -\frac{1}{2}e^{-t_1}\text{Re}\left(\frac{\overline{A}}{B}\right) +\frac{1}{4}\text{Im}\left(\overline{B}^2\right)\text{Im}\left(\overline{A}B\left(e^{t_1}+e^{-t_1}\frac{A^2}{B^2}\right)\right)\right|\\
&=\left|\frac{e^{t_1}}{2}\text{Im}\left(B^2\right) + \frac{e^{-t_1}}{4|B|^2} \left(-2\text{Re}\left(AB\right) - |A|^2\text{Im}\left(B^2\right)\right)\right|.
\end{align*}
Now we use the definition of $A$ and $B$ to compute each term explicitly as follows
\begin{align*}
2\text{Re}\left(AB\right)&= -\left(e^{t_2}+e^{-t_1}\right)\sin\varphi\\
e^{t_1}\text{Im}\left(B^2\right)&=\sin\varphi\\
e^{-t_1}|A|^2&=e^{t_2} \sin^2 \frac{\varphi}{2}+e^{-t_2} \cos^2 \frac{\varphi}{2}\\
e^{t_1}|B|^2&=e^{-t_2} \sin^2 \frac{\varphi}{2}+e^{t_2} \cos^2 \frac{\varphi}{2},
\end{align*}
and so
\[
\frac{1}{y^2}\left|\frac{\partial\left(x,y,\theta\right)}{\partial\left(t_1,t_2,\varphi\right)}\right| = \frac{1}{2}|\sin \varphi|.
\]
Therefore, we conclude that
\begin{equation}\label{app:1}
dV = \frac{1}{2}|\sin \varphi| dt_1 dt_2 d\varphi.
\end{equation}

\end{document}